\definecolor{orange}{rgb}{1,0.5,0}
\definecolor{secret}{rgb}{0,0.5,1}
\newtheorem{thm}{Theorem}[section]
\newtheorem{cor}[thm]{Corollary}
\newtheorem{lem}[thm]{Lemma}
\newtheorem{prop}[thm]{Proposition}
\theoremstyle{definition}
\newtheorem{defn}[thm]{Definition}
\newtheorem{notation}[thm]{Notation}
\newtheorem{rem}[thm]{Remarks}
\newtheorem{remark}[thm]{Remark}
\newtheorem{fact}[thm]{Fact}
\newcommand{\si}{\sigma}
\newcommand{\prf}{\smallskip\noindent{\it        Proof}. }
\newcommand{\call}{{\mathcal L}}
\newcommand{\nat}{{\mathbb  N}}
\newcommand{\inv}{^{-1}}
\newcommand{\rest}{{\lower       .25     em      \hbox{$\vert$}}}
\newcommand{\zee}{{\mathbb  Z}}
\newcommand{\acl}{{\rm acl}}
\newcommand{\aff}{{\mathbb A}}
\newcommand{\calu}{{\mathcal U}}
\newcommand{\rat}{{\mathbb Q}}
\newcommand{\ga}{{\mathbb G}_a}
\newcommand{\gm}{{\mathbb G}_m}
\newcommand{\fix}{{\rm Fix}}
\newcommand{\calb}{{\mathcal B}}
\newcommand{\dcl}{{\rm dcl}}
\newcommand{\calc}{{\mathcal C}}
\newcommand{\Ker}{{\rm Ker}\,}
\font\helpp=cmsy5
\newcommand{\semdp}
{\hbox{$\times\kern-.23em\lower-.1em\hbox{\helpp\char'152}$}\,}
\newcommand{\dnfo}{\,\raise.2em\hbox{$\,\mathrel|\kern-.9em\lower.35em\hbox{$\smile$}
$}}
\newcommand{\dfo}{\;\raise.2em\hbox{$\mathrel|\kern-.9em\lower.35em\hbox{$\smile$}
\kern-.7em\hbox{\char'57}$}\;}
\newcommand{\qfcb}{\hbox{qf-Cb}}
\newcommand{\cald}{{\mathcal D}}
\newcommand{\Aut}{{\rm Aut}}
\newcommand{\dcf}{{\rm DCF}_m}
\newcommand{\acfa}{{\rm ACFA}}
\newcommand{\dcfa}{{\rm DCF}_m{\rm A}}
\newcommand{\SU}{{\rm SU}}
\newcommand{\scl}{{\rm scl}}
\newcommand{\vlabel}{\label}
\begin{document}
\title{Groups definable in partial differential fields with an automorphism}

\author{Ronald F. Bustamante Medina \footnote{Escuela de
    Matem\'atica-CIMPA, Universidad de Costa Rica.} \footnotemark[4],
  \\Zo\'e Chatzidakis \footnote{IMJ-PRG (UMR 7586), CNRS, Universit\'e Paris-Cit\'e.} \footnote{Partially
    supported by ANR-17-CE40-0026 (AGRUME) and ANR-DFG AAPG2019 (GeoMod)} \footnotemark[4]
 \; and \\Samaria Montenegro  \footnotemark[1] \footnote{Partially supported by Proyecto Semilla 821-C0-464, Vicerrector\'ia in Investigaci\'on, UCR.}}




\maketitle

\begin{abstract}  
In this paper we study groups definable in existentially closed partial
differential fields of characteristic $0$ with  an automorphism which
commutes with the derivations. In particular, we study Zariski dense
definable subgroups of simple algebraic groups, and show an analogue of P.J.~Cassidy's result for partial differential fields. We also show that
these groups have a smallest definable subgroup of finite index. 

\smallskip
  \noindent \textbf{Keywords:} Model theory, difference-differential fields,  definable groups.
  
 \noindent \textbf{Mathematics Subject Classification:} 03C98, 12L12, 12H05, 12H10 
\end{abstract}

\section{Introduction}

Fields with operators appear everywhere in mathematics, and are
particularly present in areas close to algebra. The development of
differential and difference algebra dates back to J. Ritt (\cite{Ritt})
in the 1950's, and was then further expanded by E.~R.~Kolchin (\cite{Ko}, \cite{Ko2}) and R. Cohn (\cite{Co}) in the 1960's. 
The study of differential and difference fields has been important in mathematics since the 1940’s and has  applications in many areas of mathematics. 

One can also mix the operators, this gives the notion of differential-difference fields, i.e., a field equipped with commuting derivations and automorphisms. These fields were first studied from the point of view of algebra by Cohn in \cite{Co70}.

Model theorists have long been interested in fields with operators,
until recently mainly on  fields of characteristic $0$ with one or
several commuting derivations (ordinary or partial differential fields), and
on fields with one automorphisms (difference fields). The first author also
started in \cite{Bu1} the model-theoretic study of the existentially closed
difference-differential fields of characteristic $0$, around 2005 (one derivation, one automorphism). Then work of D. Pierce  (\cite{Pier}) and of O. Le\'on-S\'anchez (\cite{LS}) brought back the model-theory of differential fields with several commuting derivations in the forefront of research in the area, as well as when a generic automorphism is added to these
fields. However, unlike in the pure ordinary differential case and in the pure
difference case, little is known on the possible interactions between definable subsets of existentially closed differential fields with several derivations, nor with an added automorphism.

In this paper we study groups definable in  existentially closed
differential-difference fields.  We were motivated by the following
result of P.~J.~Cassidy (Theorem~19 in \cite{Ca2}; we phrase it differently):

\medskip\noindent
{\bf Theorem}.  {\em Let $\calu$ be a differentially closed field of
characteristic $0$ (with $m$ commuting derivations), let $H$
  be a simple algebraic group defined and split over $\rat$, and $G\leq H(\calu)$  a
  $\Delta$-algebraic subgroup of $H(\calu)$ which is Zariski dense in $H$. Then $G$ is definably isomorphic to $H(L)$,
  where $L$ is the constant field of a set $\Delta'$ of commuting derivations. Furthermore, the isomorphism is
  given by conjugation by an element of $H(\calu)$. } 

  \medskip\noindent
She has similar results for Zariski dense $\Delta$-closed subgroups of
semi-simple algebraic groups. 
A version of her result for (existentially closed) difference
fields was also proved by Z. Chatzidakis, E. Hrushovski and Y. Peterzil (Proposition~7.10 of \cite{CHP}): 

\medskip\noindent
{\bf Theorem}. {\em 
Let $(\calu, \sigma)$ be a model of $\acfa$. Let  $H$ be a simple algebraic group defined over $\calu$, and let $G$ be a Zariski dense definable subgroup
of $H(\calu)$. If $SU(G)$ is infinite then $G = H(\calu)$. 
If $SU(G)$ is finite, there are an isomorphism $f: H \rightarrow H' $ of algebraic groups, and integers $m>0$ and $n$ such that some subgroup of $f(G)$
of finite index is conjugate to a subgroup of $H'(\fix (\sigma^m Frob^n))$. In particular, the generic types of $G$ are non-orthogonal to the formula
$\sigma^m(x)= x^{p^{-n}}$. If $H$ is defined over $\fix(\sigma)^{alg}$,  then we may take $H = H'$ and $f$ to be conjugation by an element of $H(\calu)$.}

\medskip\noindent
In this paper, we 
 generalise
 Cassidy's results to the theory $\dcfa$, the model companion of
   the theory of fields of characteristic $0$ with $m$ 
   derivations and an automorphism which commute, and one of our main
   results is: \\[0.1in]
{\bf Theorem \ref{prop1}}. {\em Let $\calu$ be a model of $\dcfa$, let
  $H$ be a simple algebraic group defined and split 
  over $\rat$, and $G$ a definable subgroup
  of $H(\calu)$ which is Zariski dense in $H$.\\
  Then $G$ has a definable subgroup $G_0$ of finite index, the
        Kolchin closure of which   is conjugate (say by an element $g\in
        H(\calu)$) to $H(L)$, where $L$ 
         is  the field of
         constants of a set of definable commuting derivations. Furthermore,
         either $G_0^g=H(L)$, or $G_0^g\subseteq H(\fix(\si^\ell)(L))$ for some
         integer $\ell\geq 1$. 
In the latter case, if $H$ is centerless, we are able to describe
precisely the subgroup $G_0^g$ as $\{g\in H(L)\mid \si^r(g)=\varphi(g)\}$
for some integer $r$ and algebraic automorphism
$\varphi$ of $H(L)$. }\\[0.05in] 
We have  analogous results for Zariski dense definable subgroups of semi-simple
centerless algebraic groups (Theorem \ref{prop2semi}). Using an
isogeny result (Proposition \ref{thm1}), and introducing the appropriate
notion of {\em definably quasi-(semi-)simple} definable group (see
Definition \ref{definably-quasi-simple}), gives then
slightly more general results in the definably quasi-simple case, see
Corollary~\ref{MainTheorem}. The results in the (definably-quasi-)
semi-simple case require a little more work and are less easy to state, see Theorem \ref{prop2semi}
for a precise statement. \\[0.1in]
Inspired by results of Hrushovski and A. Pillay on groups definable in
pseudo-finite fields, we then endeavour to show that definable groups
which are definably
quasi-semi-simple have a smallest definable subgroup of finite
index (this smallest definable subgroup is called the {\em connected
  component}). This is done in Corollary \ref{connected2}, and follows from
several intermediate results. We first show the result for Zariski dense definable
subgroups of a simply connected algebraic simple group $H$,  give a
precise description of the connected component (Theorem 
\ref{connected1}), and show that every definable Zariski dense subgroup
of $H(\calu)$ is quantifier-free definable.  We then show the existence
of a smallest definable subgroup of finite index for an arbitrary simple
algebraic group $H$  (Theorem \ref{prop2}), to finally reach the conclusion.  
Part of the study involves giving a description of definable subgroups
of (arbitrary) algebraic groups and we obtain the following result, of independent
interest: \\[0.05in]
{\bf Theorem \ref{sbgp}}. {\em Let $H$ be an algebraic group, $G\leq
  H(\calu)$  a Zariski dense definable subgroup. Then there are an
  algebraic group $H'$, 
    a quantifier-free definable subgroup $R$ of $H'(\calu)$, together with a
  quantifier-free definable  $f:R\to G$, with $f(R)$ contained and of finite index in $G$, and $\Ker(f)$
  finite central in $R$.}\\[0.05in]
We also show how this result  extends to existentially closed difference
field of any characteristic, and state the appropriate generalization to
$\fix(\si^\ell)$ (Theorems \ref{ACFAp} and \ref{F-ell-2}).

\bigskip\noindent
The paper is organised as follows. Section 2 contains the algebraic and
model-theoretic preliminaries. Section 3 introduces the notions of
definably quasi-(semi-)simple groups and shows the isogeny result
(\ref{thm1}). Section 4 contains the main results of the paper:
description of Zariski dense definable subgroups of simple and
semi-simple algebraic groups (\ref{prop1}, \ref{MainTheorem} and
\ref{prop2semi}). Section 5 proves the above mentioned 
Theorem~\ref{sbgp} 
and shows that definably quasi-semi-simple definable groups have a
definable connected component. 

\section{Preliminaries}

This section is divided in five subsections: 2.1 - Differential and
difference algebra; 2.2 - Model theory of differential and difference
fields; 2.3 - The results of Cassidy; 2.4 - Quantifier-free canonical
bases; 2.5 - Some well-known group-theoretic results. 

\noindent{\bf Notation and conventions:} 
{\bf All  rings are commutative, all
    fields  are commutative of characteristic 0.}

\noindent  
If $K$ is a field, then $K^{alg}$ denotes an algebraic closure of
  $K$ (in the sense of the theory of fields). 

\subsection{Differential and difference algebra}
\begin{defn} 
For more details, please see \cite{Ko2},
\cite{Co} and \cite{Co70}.
\begin{enumerate}

\item Recall that a \emph{derivation} on  a ring $R$ is a map $\delta: R
  \to R$ which satisfies  $\delta(a +b) = \delta(a) + \delta (b)$ and 
$\delta(ab) = a \delta (b) + \delta(a) b$ for all $a, b \in R$.  

\item A \emph{differential ring}, or \emph{$\Delta$-ring}, is a ring
  equipped with a set $\Delta=\{\delta_1,\ldots,\delta_m\}$  of
  commuting derivations. A {\em differential field} is a differential
  ring which is a field. 

\item A {\em difference ring} is a ring equipped with a distinguished
  automorphism, which we denote by $\si$. (This differs from the usual
  definition which only requires $\si$ to be an endomorphism.) A {\em difference field} is a
  difference ring which is a field.

  \item A {\em difference-differential ring} is a differential ring
    equipped with an automorphism $\si$ (which commutes with the
    derivations). A {\em difference-differential field} is a
    difference-differential ring which is a field.

\end{enumerate}
\end{defn}
 
\begin{notation} 
  \begin{enumerate}

  \item
    If $K$ is a difference field, then $\fix(\si)(K)$, or 
    $\fix(\si)$ if there is no ambiguity,  denotes
    the {\em fixed field} of $K$, $\{a\in K\mid \si(a)=a\}$. 

\item Let $K\subset\calu$ be difference-differential fields, and
  $A\subset \calu$. Then $K(A)_\Delta$ denotes the differential field
  generated by $A$ over $K$, $K(A)_\si$ the difference field generated
  by $A$ over $K$, and $K(A)_{\si,\Delta}$ the difference-differential
  field generated by $A$ over $K$. (Note that we require $K(A)_\si$ and
  $K(A)_{\si,\Delta}$ to be closed under $\si\inv$.)

\end{enumerate} 
\end{notation}

\medskip\noindent
{\bf \large Polynomial rings and the corresponding ideals and topologies}

\begin{defn} Let $K$ be a difference-differential ring, $y=(y_1,\ldots,y_n)$ a tuple of
indeterminates. 
\begin{itemize}
	\item 
Then $K\{y\}$ (or $K\{y\}_\Delta$)  denotes the ring of
polynomials in the variables $\delta_1^{i_1}\cdots\delta_m^{i_m}y_j$,
where $1\leq j\leq n$, and the superscripts $i_k$ are non-negative
integers. It becomes naturally a  differential ring, by setting
$\delta_k(\delta_1^{i_1}\cdots\delta_m^{i_m}y_j)=\delta_1^{j_1}\cdots\delta_m^{j_m}y_j$,
where $i_\ell=j_\ell$ if $\ell\neq k$, and $j_k=i_k+1$. The elements of
$K\{y\}$ are called {\em differential polynomials}, or {\em
  $\Delta$-polynomials}.

\item  $K[y]_\si$ denotes the ring of polynomials in the variables
  $\si^i(y_j)$, $1\leq j\leq n$, $i\in\zee$, with the obvious action of
  $\si$; thus it is also a difference ring. They are called {\em
    difference polynomials}, or {\em $\si$-polynomials}. 
\item 
$K\{y\}_\si$ denotes the ring of
polynomials in the variables $\si^i \delta_1^{i_1}\cdots\delta_m^{i_m}y_j$, with the obvious action of $\si$
and derivations. They are called {\em
    difference-differential polynomials}, or {\em $\si$-$\Delta$-polynomials}. 

\item A {\em $\Delta$-ideal} of a differential ring $R$ is an ideal
  which is closed under the derivations in $\Delta$ and it is
called {\em linear} if it is generated by homogeneous linear
$\Delta$-polynomials. 
	
  Similarly, a {\em $\si$-ideal} $I$ of a
    difference ring $R$ is an ideal closed under $\si$; if it is also
    closed under $\si\inv$, we will call it {\em reflexive}; if whenever
    $a\si^n(a)\in I$, then $a\in I$, it is {\em perfect}.
    Finally, a {\em $\si$-$\Delta$-ideal} is an ideal which is closed under
      $\si$ and $\Delta$. 
\end{itemize}
\end{defn} 

\begin{rem}
As with the Zariski topology, if $K$ is a difference-differential field,
the set of zeroes of differential
polynomials, $\si$-polynomials and $\si$-$\Delta$-polynomials in some
$K^n$ are the basic closed sets of a Noetherian topology on $K^n$, see
Corollary 1 of Theorem III in \cite{Co70}.  We will call these sets {\em $\Delta$-closed} (or {\em Kolchin
closed}, or {\em $\Delta$-algebraic}), {\em $\si$-closed/algebraic} and
{\em $\si$-$\Delta$-closed/algebraic} respectively. These
topologies are called the {\em Kolchin topology} (or {\em
  $\Delta$-topology}),  {\em $\si$-topology} and {\em $\si$-$\Delta$-topology}
  respectively. There are natural notions of closures and of irreducible
  components. 
\end{rem}

\begin{fact} \vlabel{sit}
Recall that if $K$ is a differential subfield of $\calu$, then  
  $K\Delta$ denote the $K$-vector space generated by the
 elements of $\Delta$.

      \begin{enumerate}
      
\item
 (\cite{Ko2}, Proposition  0.5.7)
Every commuting linearly independent subset of
$\calu \Delta$ is a subset of a commuting basis of 
$\calu\Delta$.
\item (\cite{Ko2}, Corollary to Proposition 0.5.8) Let
  $\Delta'\subset\calu \Delta$ be a set of commuting  derivations. Then
  the $\Delta'$-differential field $\calu$ is differentially closed. 
\item (\cite{Ko2}, 0.8.13) (Sit) Let A be a perfect $\Delta$-ideal of the $\Delta$-algebra $\calu\{y\}$.
A necessary and sufficient condition that the set of zeroes $Z(A)$ of
$A$ be a subring of $\calu$, is that there exist a vector subspace and Lie
subring $\cald $ of $\calu \Delta$ such that $A = [\cald y]$ (the $\Delta$-ideal generated by
all $Dy$, $D\in \cald$). When this is the case, there exists a commuting
linearly independent subset $\Delta'$ of $\calu \Delta$ such that $Z(A)$ is the field of
absolute constants of the $\Delta'$-field $\calu$.

      \end{enumerate}
      \end{fact}

\noindent
Item (1) is valid under weaker hypotheses

\begin{defn} For short, we will call {\em field of constants in $\calu$}
  a subfield of $\calu$, which is the field of constants of some subset
  $\Delta'\subset\calu\Delta$ of commuting derivations. 
  \end{defn}

\begin{rem} \vlabel{defga} Recall that we are in
characteristic $0$, this result is false in positive characteristic. We
let $K$ be a differential subfield of the differentially closed field $\calu$. Consider the commutative monoid $\Theta$ (with $1$)
generated by $\delta_1,\ldots,\delta_m$, and let $K\Theta$ be the
$K$-vector space with basis $\Theta$. It can be made into a ring,
using the commutation rule $\delta_i \cdot a=a\delta_i
+\delta_i(a)$, $i=1,\ldots,m$. Each element $f$ of $\calu\Theta$ defines a {\em linear
  differential operator} $L_f:\ga\to\ga$, defined by $a\mapsto f(a)$. One
  has $L_{f\cdot g}=L_f\circ L_g$. Every $\Delta$-closed subgroup of
  $\ga(\calu)$ is then 
  defined as the set of zeroes of a finite set of linear differential operators in $\calu\Theta$, and for
  $n\geq 1$, every $\Delta$-closed subgroup of $\ga^n(\calu)$ is defined by
  conjunctions of equations of the form $L_1(x_1)+\cdots +L_n(x_n)=0$,
  with the $L_i$ in $\calu\Theta$, and with the $L_i$ in $K\Theta$ if
  the subgroup is defined over $K$,  see e.g. Proposition~11 in
  \cite{Ca1}, or Proposition 0.8.12 in \cite{Ko2}. Sets of elements of
  $K\Theta$ generate what is called a {\em linear $\Delta$-ideal}. The
  following result is  certainly well-known, but we did not know of a reference. 
  \begin{lem} \vlabel{lemTheta}
Let $S$ be a $K$-subspace of $K\Theta$, and assume that it is closed
under left multiplication by the $\delta_i$, $i=1,\ldots,m$, and that it does not contain $1$. Then
the differential ideal $I$ generated by the set $$S(x):=\{f(x)\mid f\in S\}\subset
K\{x\}_\Delta$$ does not contain $x$ and is prime. \end{lem} 

\prf  Note that $I$ is simply the $K\{x\}_\Delta$-module generated by $S(x)$,
i.e., an element of $I$ is a finite $K\{x\}_\Delta$-linear combination
of elements of $S(x)$.  Every element $f$ in $K\{x\}_\Delta$ can be written uniquely as
$f_0+f_1+f_{>1}$, with $f_0$ the constant term, $f_1$ the sum of the
linear terms, and $f_{>1}$ the sum of terms of $f$ of total degree $\geq
2$. Note that $(f+g)_i=f_i+g_i$ for $i\in \{0,1,>1\}$. Moreover
$$(fg)_0=f_0g_0,\ \ (fg)_1 = f_0g_1+f_1g_0, \ \
(fg)_{>1}=f_{>1}g+fg_{>1}+f_1g_1.$$
Since all elements of $S(x)$ have  degree
$\geq 1$, it follows that all elements of $I$ have constant term
$0$. This easily implies that if $f\in I$, then $f_1\in S(x)$: as $f$ is a $K\{x\}_\Delta$-linear combination of elements of
$S(x)$, and as the elements of $S(x)$ have degree $1$, it follows that $f_1$ is a
$K$-linear combination of elements of $S(x)$ of degree $1$, i.e.,
belongs to $S(x)$. But $1\notin S$ implies $x\notin S(x)$, and therefore
$x\notin I$. \\
The primeness of $I$
follows from the fact that $I$ is generated by linear differential 
polynomials, so that, as a ring, $K\{x\}_\Delta/I$ is isomorphic to a
polynomial ring (in maybe infinitely many indeterminates) over
$K$. \qed

\end{rem}

\subsection{Model theory of differential and difference fields} \label{prel}

\begin{notation}
We consider the language $\call$ of rings, let
$\Delta=\{\delta_1,\ldots,\delta_m\}$. We define $\call_\Delta=\call\cup
\Delta$, $\call_\si=\call\cup \{\si\}$ and $\call_{\si,
  \Delta}=\call_\Delta\cup \{\si\} $  where the $\delta_i$ and $\si$ are
unary function symbols. 
\end{notation}

\para {\bf The theory $\dcf$}\\
The model theoretic study of differential fields (with one derivation, in
characteristic $0$) started with the work of A. Robinson (\cite{Ro})
 and
of L.~C.~Blum (\cite{Bl}). 
For several commuting derivations, 
T.~McGrail showed in \cite{MG}  that the $\call_\Delta$-theory of differential fields of characteristic zero
with $m$ commuting derivations has a model companion, which we denote by
$\dcf$. The $\call_\Delta$-theory $\dcf$ is complete, $\omega$-stable and eliminates
quantifiers and imaginaries. Its models are called {\em differentially
  closed}. Differentially closed fields had appeared earlier in the work
of Kolchin (\cite{Ko}).  From now on till \ref{P-fund}, definable will
mean $\call_\Delta$-definable in $\calu$, maybe with parameters. \\
An interesting corollary of Lemma \ref{lemTheta} is the following:
\begin{thm}\vlabel{autL}
Let $\calu\models \dcf$, $L\leq \calu$ be a {\em subfield of constants}. Then there is no  definable non-trivial automorphism of $L$. 
\end{thm}

\prf Suppose that $\psi:L\to L$ is a definable isomorphism. The graph of $\psi$ defines an additive subgroup $S$ of
$L\times L\leq \calu\times \calu$. \\
By Remark \ref{defga} there are linear differential polynomials $F_i(x)$
and $G_i(y)$, 
$i=1,\ldots, s$, such that 
$$S=\{(x,y)\in L\times L\mid F_1(x)=G_1(y),\ldots,F_s(x)=G_s(y)\}.$$
Let $H_1,\ldots,H_r$ be the linear combinations of the
  $\delta_i$ such that $H_1(x)=H_2(x)=\cdots=0$ defines the field
  $L$. Then, because $S$ is the graph of an
automorphism of $L$, we have
$$\bigcap_{i=1}^s\Ker(F_i)\cap
  L=\{0\}=\bigcap_{i=1}^s\Ker(G_i)\cap L.$$ Hence, $x$
belongs to the differential ideal generated by the $F_i(x)$ and the $H_j(x)$, and this
implies (see Lemma \ref{lemTheta}) that there are linear differential polynomials $L_1,\ldots,L_{s+r}$
such that $\sum_{i=1}^s L_i(F_i(x))+\sum_{j=1}^rL_{s+j}(H_j(x))=x$; letting $G(y)=\sum_{i=1}^s L_i(G_i(y))+\sum_{j=1}^rL_{s+j}(H_j(y))$,
we get $x=G(y)$. Since $\psi\inv$ is injective, $\Ker(G)$ must be
trivial, i.e. $G(y)=ky$ for some $k\in\calu$. However, since $\psi$ is a
field automorphism we must have $k=1$, i.e: $\psi=id$. \qed

\para {\bf Definable and algebraic closure, independence}. Let $(\calu, \Delta)$ be a
differentially closed field. 
If $A\subset \calu$, then $\dcl_\Delta(A)$ and $\acl_\Delta(A)$ denote
the definable and algebraic closure in the sense of the theory $\dcf$.
Then $\dcl_\Delta(A)$ is the smallest differential field
containing $A$, and $\acl_\Delta(A)$ is the field-theoretic algebraic closure
of $\dcl_\Delta(A)$. Independence in $\calu$  is given by independence in the sense
of the theory ACF (of algebraically closed fields) of the algebraic
closures, i.e., $A\dnfo_CB$ iff $\acl_\Delta(CA)$ and $\acl_\Delta(CB)$
are linearly disjoint over $\acl_\Delta(C)$. \\[.1in]
A result we will often use is the following: 

\begin{thm} (Pillay, \cite{P-Fund}, Theorem 4.1 and Corollary 4.2)\vlabel{P-fund} Let $G$ be a
  group definable in $\calu$, which is connected (i.e., with no definable subgroup of finite
  index). Then $G$ embeds definably into $H(\calu)$, for some algebraic
  group $H$.
\end{thm}

\noindent This result is stated for one derivation, but the author remarks that it
generalises easily to several commuting derivations.

\para {\bf The theories {\rm ACFA} and $\dcfa$}\\
The $\call_\si$-theory of difference fields has a
model companion  denoted $\acfa$ (\cite{M}, see also \cite{CH}   and \cite{CHP}). 
Le\'on-S\'anchez showed that the $\call_{\si,\Delta}$-theory of
difference-differential fields admits a model companion, $\dcfa$, and he
gave an explicit axiomatisation of this theory in \cite{LS}. (When
$m=1$, the theory was extensively studied by the third author, in
\cite{Bu2}, see also \cite{Bu1}). \\[0.05in]
The theories $\acfa$ and $\dcfa$ have similar properties, they are model-complete, supersimple and eliminate imaginaries, but they are not complete and  do not eliminate quantifiers.  The completions of both theories are obtained by
describing the isomorphism type of the difference subfield
$\rat^{alg}$. In what follows we will view ACFA as $\dcfa$ with $m=0$,
and we fix a (sufficiently saturated) model $\calu$ of $\dcfa$. Until
subsection 2.3, definable will mean $\call_{\si,\Delta}$-definable in
$\calu$, and a {\em small} $A$ will be one such that $\calu$ is $(|A|+\aleph_0)^+$
saturated.

\para{\bf The fixed field}\\
We let $\fix (\sigma):= \{x \in \calu: \sigma (x)=x\}$ be the fixed
field of $\calu$; by Theorem~\ref{LS1}(8) below, it is a pseudo-finite
field and $\fix (\si^k)$ is the unique extension of $\fix
(\si)$ of degree $k$.

\begin{thm} (\cite{LS}, Propositions 3.1, 3.3 and 3.4). \vlabel{LS1}
 Let $a, b$ be tuples in $\calu$ and let  $A\subseteq \calu$.
We will denote by $\acl(A)$ the model theoretic closure of $A$ in the
$\call_{\si,\Delta}$-structure $\calu$, and by $\calc$ the subfield of
absolute constants of $\calu$. Then:

\begin{enumerate}
\item $\acl(A)$ is the (field-theoretic) algebraic closure of the
  difference-differential field generated by $A$.
\item If $A = \acl(A)$, then the union of the quantifier-free diagramme of $A$ and of the theory $\dcfa$ is a complete theory in the language $\call_{\si,\Delta}(A)$.
	\item $tp(a/A)=tp(b/A)$ if and only if there is an
$\call_{\si,\Delta}(A)$-isomorphism $\acl(Aa)\to \acl(Ab)$ sending $a$
          to $b$.
          \item Every $\call_{\si,\Delta}$-formula $\varphi(x)$ is
            equivalent modulo $\dcfa$ to a disjunction of formulas of
            the form $\exists y\, \psi(x,y)$, where $\psi$ is
            quantifier-free (positive), and such that for every tuples
            $a$ and $b$ (in a difference-differential field of
            characteristic $0$), if $\psi(a,b)$ holds, then $b\in \acl(a)$. 
\item Every completion of $\dcfa$ is supersimple (of SU-rank
  $\omega^{m+1}$). Independence is given by independence (in the sense
  of {\rm ACF}) of algebraically closed sets:\\
$a$ and $b$ are independent over $C$ if and only if the fields
$\acl(Ca)$ and $\acl(Cb)$ are linearly disjoint over $\acl(C)$.
\item Every completion of $\dcfa$ eliminates
  imaginaries.
  \item If $k\geq 1$, and $\calu\models \dcfa$, then  the difference-differential field
$\calu[k]=(\calu,+,\cdot,\Delta,\si^k)$ is also a model of $\dcfa$,  and the algebraic closure of
    $\fix(\si)$ is a model of $\dcf$.
    \item $\fix(\si)$ and $\fix(\si)\cap \calc$ are pseudo-finite fields. 
\end{enumerate}
\end{thm}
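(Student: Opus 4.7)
My plan is to follow the standard template used for ACFA (as in \cite{CH}, \cite{CHP}) and adapt it to the partial-differential setting, using Le\'on-S\'anchez's geometric axiomatisation of $\dcfa$. The heart of everything is an amalgamation lemma: if $A = \acl(A)$ is a difference-differential subfield of two models $\calu_1, \calu_2$ of $\dcfa$, then the two extensions $\acl(A\cup\{b_1\}) \subseteq \calu_1$ and $\acl(A\cup\{b_2\}) \subseteq \calu_2$ (with $b_1, b_2$ having the same field-theoretic type over $A$) can be amalgamated inside a single $\dcfa$-extension of $A$, as long as their underlying field extensions are compatible. For $\dcf$ amalgamation over algebraically closed differential fields is known (McGrail), and one extends $\si$ across the amalgam using the geometric characterisation that says any admissible pair $(V,W)$ of $\Delta$-irreducible varieties with $W$ projecting $\Delta$-dominantly on both factors can be realised by a generic $(a, \si(a))$; working over $A = \acl(A)$ ensures irreducibility is preserved.

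Items (1), (2), (3) then follow in sequence. For (1), given $a \in \calu \setminus k\langle A\rangle^{alg}$ (where $k\langle A\rangle$ is the difference-differential field generated by $A$), I would produce $a' \ne a$ with $\mathrm{tp}(a'/A) = \mathrm{tp}(a/A)$ by realising the appropriate type via amalgamation, showing $a \notin \acl(A)$. For (2), the quantifier-free diagram of $A$ plus $\dcfa$ proves complete by a back-and-forth over algebraically closed bases, using amalgamation at each step. Item (3) is the immediate homogeneous version of (2) applied to $\acl(Aa)$ and $\acl(Ab)$. Item (4) (near-quantifier elimination) comes from (3) together with a compactness argument: if $\varphi(x)$ isolates a type-definable condition determined by finite quantifier-free data in $\acl(x)$, one writes $\varphi$ as a disjunction of formulas $\exists y\,\psi(x,y)$ where $\psi$ describes a step in the algebraic closure construction, hence forces $y \in \acl(x)$.

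For (5), I would use the Kim--Pillay style approach: define $a \dnfo_C b$ iff $\acl(Ca)$ and $\acl(Cb)$ are linearly disjoint over $\acl(C)$, verify symmetry, transitivity and local character from the corresponding properties of ACF-independence, and use amalgamation over algebraically closed bases to get the independence theorem. This shows $\dcfa$ is simple with the described forking. The SU-rank bound $\omega^{m+1}$ is then computed from the fact that $\dcf$ has SU-rank $\omega^m$ (each derivation contributing one $\omega$) and that adjoining a generic $\si$ adds one further $\omega$, as the sequence $a, \si(a), \si^2(a), \ldots$ gives an infinite independent sequence of $\Delta$-generics. For (6), elimination of imaginaries is deduced by the standard reduction: uniform EI follows from code-finding in the algebraically closed field together with (3), which gives that canonical bases of types live in $\dcl$.

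Finally, for (7), I would check the axioms directly. For $\calu[k] \models \dcfa$: given any difference-differential extension $(L, \si^k, \Delta)$ of $(\calu, \si^k, \Delta)$, build a corresponding $\si$-extension $(L', \si, \Delta)$ of $(\calu, \si, \Delta)$ by ``unfolding'' $\si^k$ into a length-$k$ orbit (formally: take $k$ copies of $L$ and let $\si$ cyclically permute them, identifying one copy with $L$), then use the existential closure of $\calu$ to realise whatever system of $\si$-$\Delta$-equations we need inside $\calu$ itself, and observe these are $\si^k$-$\Delta$-equations. For $\acl(\fix(\si))$ a model of $\dcf$: use the geometric axioms for $\dcf$ applied to a $\Delta$-variety $V$ defined over $\fix(\si)^{alg}$; any generic $\Delta$-point over $\fix(\si)^{alg}$ in $\calu$ can be averaged under $\si$-orbits to yield a point of $V$ inside $\fix(\si)^{alg}$, using that irreducibility over the fixed field is preserved and that $\fix(\si)$ is perfect with $\fix(\si)^{alg}$ algebraically closed.

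The main obstacle is the amalgamation step, since the geometric axiomatisation in the partial-differential setting requires carefully tracking $\Delta$-irreducibility and $\Delta$-dominance for the projections $\pi_1, \pi_2 : W \to V$; the rest of the theorem is essentially an exercise in deploying standard model companion technology once amalgamation is in hand.
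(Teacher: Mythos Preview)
The paper does not prove this theorem: it is stated with the citation ``(\cite{LS}, Propositions 3.1, 3.3 and 3.4)'' and no proof is given, as these results are due to Le\'on-S\'anchez (building on the ACFA template of \cite{CH}). Your proposal is a reasonable outline of how those proofs actually go, and is in spirit the same approach Le\'on-S\'anchez takes; so there is nothing to compare against in the present paper.

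One small point on your sketch for the second half of (7): the phrase ``averaged under $\si$-orbits'' is not the right mechanism. The clean argument is to first use the first half of (7) (that $\calu[k]\models\dcfa$), and then, given a $\Delta$-irreducible variety $V$ defined over $\fix(\si^k)$, apply the geometric axioms of $\dcfa$ to the pair $(V,\Delta_V)$ where $\Delta_V\subseteq V\times V=V\times\si^k(V)$ is the diagonal; this produces a $\Delta$-generic $a$ of $V$ with $\si^k(a)=a$, hence $a\in\fix(\si^k)\subseteq\fix(\si)^{alg}$. No averaging is involved.
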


\begin{rem} \vlabel{rem-LS1}\begin{enumerate}
    \item[(a)] Item (4) is stated in a slightly different way in
      \cite{LS}. Here we prefer to have our set defined positively, at
      the cost of $y$ consisting of maybe several elements. This gives
      us that every definable subset of $\calu^n$ is the projection of a
      $\si$-$\Delta$-algebraic set $W$ by a projection with finite fibers. 
 \item[(b)]  
By item (5) independence is given by independence (in the sense of ACF) of algebraically closed sets. 
This shows in particular that $\dcfa$ is {\em one-based  over} ACF (see  \cite{BMW}).
\item[(c)] As with ACFA, if $G$ is a definable
  subgroup of $H(\calu)$ for 
  some algebraic group $H$, we  define the {\em prolongations}
  $$p_n:H(\calu)\to H(\calu)\times \si(H(\calu))\times \cdots \times
  \si^n(H(\calu)),\ g\mapsto (g,\si(g),\ldots,\si^n(g)),$$ and let $G_{(n)}$ be the Kolchin closure of
  $p_n(G)$.  Then an element $g\in G$ is a generic of $G$ if and only if  $p_n(g)$ is a generic of the $\Delta$-closed subgroup
  $G_{(n)}$ of $H(\calu)\times \si(H(\calu))\times \cdots \times
  \si^n(H(\calu))$ for
  each $n$. This implies that $G$ and its $\si$-$\Delta$-closure
  have the same SU-rank, and therefore that  $G$  has finite index in its
  $\si$-$\Delta$-closure. 

\item[(d)] Let  $A\subset \calu$ be a small difference-differential
subfield, and let $L$ be a small difference-differential
field extending $A$. Assume that $L\cap
A^{alg}=A$. Then there is an $A$-embedding of $L$ into $\calu$. Indeed,
our assumption implies that $L\otimes_A A^{alg}$ is an integral domain,
whence the difference-differential structure of $L$ and of $A^{alg}$
extend uniquely to the  field $M$ of fractions of $L\otimes_AA^{alg}$.
Because $A^{alg}=\acl(A)$, the conclusion now follows by item (2) and the saturation hypothesis on $\calu$: $qftp(L/A)$ is
realized in $\calu$.
\item[(e)] This  has the following consequence, which we will use:\\
 Let $q$ be a
quantifier-free type over a difference-differential subfield $A$ of $\calu$, and
suppose that $q$ is stationary, i.e., if $a$ realises $q$, then
$A(a)_{\si,\Delta}\cap A^{alg}=A$. Let $f:A\to A'\subset\calu$ be an
isomorphism; then $f(q)$ is realised in $\calu$.

\item[(f)] When $m=0$, all these results appear in \cite{CH}. When
  $m=1$, they appear in \cite{Bu1}, \cite{Bu2}.
  \item[(g)] Item (8) has a more general formulation in Proposition 3.4(iv) of \cite{LS} as it applies to any field of constants of a set $\Delta'\subset\fix(\si)(\calc)\Delta$ of commuting 
    derivations. 

\end{enumerate}

\end{rem}

\noindent
From item (b) above, we can deduce the following:

\begin{lem}\vlabel{gpdef} Let $G$ be a group definable in $\dcfa$. Then there are a
  definable subgroup $G_1$ of finite index in $G$, and a definable
  homomorphism $f: G_1\to H(\calu)$, where $H$ is an algebraic group, and
  the kernel of $f$ is finite. 
\end{lem}


\prf By Remark \ref{rem-LS1}(b), and Theorem 4.9, Remark
4.10 of \cite{BMW}, $G$ has a definable subgroup $G_0$ of finite index,
and there is a definable homomorphism $g:G_0\to H(\calu)$, where $H$ is
an algebraic group and $\Ker(g)$ is finite. \qed

\para {\bf The fixed field}. The fixed field plays an important role in
the study of interactions between definable sets. For instance, any
non-stationary type is non-orthogonal to the fixed field (as proved by
the first author in \cite{Bu2}). The next few results of this subsection are not particularly
difficult, but to our knowledge do not appear in print. 

\begin{defn}
Let $M$ be a $\mathcal{L}$-structure.  A definable subset $D$ of $M^r$ is {\em stably embedded} if
every $M$-definable subset of $D^n$ is definable with parameters from
$D$, for any $n\geq 1$.
\end{defn}

\bigskip\noindent
Let $(\calu, \si, \Delta)$ be a sufficiently saturated model of $\dcfa$.
For $\ell\geq 1$,  we consider the difference-differential field
$F_\ell :=\fix(\si^\ell)$. It is the only Galois extension of $F_1=:F$
of degree
$\ell$, and its Galois group over $\fix(\si)$ is generated by
$\si$. Similarly, if $m\geq 1$, $\fix(\si^{\ell m})$ is the unique Galois
extension of $F$ of degree $m\ell$. The lemma below will be useful in
determining the induced structure.

\begin{lem}\vlabel{F-ell-1} Let $k,\ell\geq 1$ be integers. Then the
  difference-differential field $F_{k\ell }$ is interpretable in the
  difference-differential field $F_k$. Furthermore, given an
  $\call_{\si,\Delta}$-formula $\varphi(x)$ ($x$ an $n$-tuple of variables),
  there is an $\call_{\si,\Delta}(F_k)$-formula $\varphi^*(y)$, $y$
  a $\ell n$-tuple of  variables, and an $F_k$-basis $\calb$ of $F_{k\ell
    }$ such that for any $n$-tuple $c\in F_{k\ell
    }$, if $d$ denotes the coordinates of $c$ with respect to $\calb$,
  then $$F_{k\ell }\models \varphi(c) \iff F_k\models \varphi^*(d).$$
  The formulas $\varphi$ and $\varphi^*$ have the same complexity.
  \end{lem}

\prf We will first do  the case $k=1$. Let $\alpha$ be such that
$F(\alpha)=F_\ell$, and let $f(X)=X^\ell+\sum_{i=0}^{\ell-1}a_iX^i\in F[X]$ be
its minimal polynomial over $F$. If $b_0,\ldots,b_{\ell-1}\in
F$ are such that $\si(\alpha)=\sum_{i=0}^{\ell-1}b_i\alpha^i$, then,
identifying $F_\ell$ with $F\oplus F\alpha\oplus \cdots \oplus
F\alpha^{\ell-1}$, one can define  on $F^\ell$ the structure of the
difference-differential field $F_\ell$: Addition is obvious,
multiplication by $\alpha$ is given by a matrix involving the
coefficients $\bar a$ of $f(X)$, and $\si$ is definable using
$\si(\alpha)=\sum b_i\alpha^i$. As to the derivations, note that for
$1\leq j\leq m$ one has 
$$\delta_j(\alpha)=-\frac{\sum_{i=0}^\ell\delta_j(a_i)\alpha^i}{\sum_{i=1}^\ell
  ia_i\alpha^{i-1}}.$$
This interpretation of $F_\ell$ in $F$ is done quantifier-freely, so
that from a formula $\varphi$ one easily gets the formula $\varphi^*$,
with the same complexity,  when $k=1$. \\[0.05in]
For the general case, let $k>1$, and observe that $F$ is
quantifier-freely definable in the difference-differential field $F_k$
by the formula $\si(x)=x$. If $\alpha$ is such that
$F_{k\ell}=F(\alpha)$, then the procedure given above allows us to
interpret $F_{k\ell}$ inside $F$, by defining an
$\call_{\si,\Delta}$-structure on $F^{k\ell}$. In order to get the full
result, it 
suffices to show that some embedding of the difference-differential field
$F_k$ inside the copy $F^{k\ell}$ of $F_{k\ell}$ is definable: let
$\beta\in F_k$ be such that $F_k=F(\beta)$, write
$\beta=\sum_je_j\alpha^j$, and let $\iota:F_k\to
F^{k\ell}$ be defined by sending $\beta$ to the element with coordinates
$(e_j)$ with respect to the basis $\{1,\alpha,\ldots,\alpha^{k\ell
  -1}\}$. Thus, $\iota$ is the restriction to $F_k$ of the isomorphism
$F_{k\ell}\to F^{k\ell}$. We are therefore able to interpret
quantifier-freely  in $F$ and
also in $F_k$, the $\call_{\si,\Delta}$-structures
$(F_{k\ell},F_k,F,\Delta,\si)$. (This interpretation is uniform in the
parameters $(a,b,e)$.) The result follows easily. \qed

\begin{prop}\vlabel{psf1} Fix $\ell\geq 1$. Then $F_\ell$ is stably embedded, and
 its induced structure is that of the pure difference-differential
  field. If $\ell=1$, it is the pure differential field. 

\end{prop}

\begin{proof}  The first part follows from elimination of imaginaries
  (Prop.~3.3 in \cite{LS}): if $c$ is a code for a definable subset $S$ of
  $F_\ell^n$, then $\si^\ell(c)=c$. So every definable subset of $F_\ell^n$ is
  definable using parameters from $F_\ell$. 

\noindent
  By Proposition \ref{LS1}(4), every $\call_{\si,\Delta}$-formula
  $\varphi(\bar x)$ is
equivalent (modulo $\dcfa$) to a formula of the form $\exists \bar y\,
\psi(\bar x,\bar y)$, where $\psi(\bar x,\bar y)$ is quantifier-free, and whenever $(a,b)$
realises $\psi$, then $b\in \acl(a)$.  Let $r$ be a bound on the degree
of $b$ over the difference-differential field generated by $a$, and
$N$ the least common multiple of all integers $\leq r$. By Lemma
\ref{F-ell-1} applied to $F_{\ell N}$, if one fixes an $F$-basis $\calb$
of $F_{\ell N}$ over $F$, there is an $\call_\Delta(F_\ell)$-formula
$\exists \bar z\,
\psi^*(\bar t,\bar z)$ such that for any $\bar c\in F_{\ell N}$, if $\bar d$
denotes the tuple of coordinates of $\bar c$ with respect to $\calb$,
then $$\calu\models \exists \bar y\, \psi(\bar c,\bar y) \iff 
F\models \exists z\, \psi^*(\bar d,\bar z).$$
This finishes the proof. \qed

\end{proof}

\begin{cor} \vlabel{psf4} 
We consider $F_\ell$ as a difference-differential field
and we denote as $\acl_{F_\ell}(A)$ the algebraic closure in the sense of model theory in $F_\ell$.
If $A\subset F_\ell$, then $\acl_{F_\ell}(A)=\acl(A)\cap F_\ell$, and independence in $F_\ell$ is given by independence (in the sense of
  ACF) of algebraic closures.
\end{cor}

\prf Certainly, in $F_\ell$, $A$ and $B$ independent over $C$ implies
that $\acl_{F_\ell}(CA)$ and $\acl_{F_\ell}(CB)$ are algebraically
independent  over $\acl_{F_\ell}(C)$. Conversely, assume that $A=\acl_{F_\ell}(CA)$ and
$B=\acl_{F_\ell}(CB)$  are 
algebraically independent  over $C=\acl_{F_\ell}(C)$, but that there is some formula
$\psi(a,b,c)\in tp_{F_\ell}(A/B,C)$ ($a\in A$, $b\in B$ and $c\in C$) which forks over $C$. As ${\rm Th}(F_\ell)$ is
supersimple, this means that there is a sequence $(b_i)_{i\in\omega}$ of
independent realisations (in the sense of $F_\ell$) of $tp_{F_\ell}(b/Ca)$
such that $\{\psi(a,b_i,c)\mid i\in\omega\}$ is inconsistent. By the
other direction, we have that, letting $B_i=\acl_{F_\ell}(Cb_i)$, the
difference-differential fields $A$, $(B_i)_{i\in\omega}$ are
algebraically independent over $C$, and therefore so are
$\acl(A)=A^{alg}$, $\acl(B_i)=B_i^{alg}$ over $\acl(C)=C^{alg}$. Observe
also that there is a formula $\psi'(a,b,c)$ such that $$
\calu\models \psi'(a,b,c) \iff F_\ell\models \psi(a,b,c).$$
Thus, we would have that  $\{\psi'(a,b_i,c)\mid i\in\omega\}$ is
inconsistent, which gives the desired contradicition.

\subsection{The results of Cassidy}

\para Before stating Cassidy's results, we need
some definitions and  discussion of the various notions. We work inside a (sufficiently saturated) differentially closed
field $\calu$ of characteristic $0$, and in this subsection definable will mean definable
in the differential field $\calu$. \\[0.05in]

\begin{defn} (See Chapter II in \cite{Ca1}) An (affine) \emph{$\Delta$-algebraic
group}, or {\em differential algebraic group}, is a $\Delta$-closed
  subset of affine space, 
 whose group laws are
 locally given by everywhere defined differential rational maps. 
\end{defn}

\noindent 
This context was extended to the
non-affine setting, see e.g. \cite{Ko2} chapter 1 \S2. A
$\Delta$-algebraic group is then definable in $\calu$, but there are
definable groups which are not $\Delta$-algebraic.

  \begin{defn}
    \begin{enumerate}
\item A (definable) group is {\em linear} if it (definably) embeds into
  some ${\rm GL}_n(\calu)$.  If the algebraic group $G$ has no infinite
  abelian quotient, then $G$ is linear. 
    \item Recall that an algebraic group $G$ is  {\em simple} if it is
      non-abelian, and has no proper connected normal algebraic
      subgroup. It is {\em semi-simple} if it connected, linear, and  has no
      non-trivial connected abelian normal algebraic subgroup. Note
      that a finite 
      center is allowed. A semi-simple algebraic group is
      isogenous to a 
      (finite) product of simple normal algebraic subgroups, called its
      {\em simple components}. See \cite{Hu}, Theorem 27.5. 
      \item 
A $\Delta$-algebraic group $G$ is {\em $\Delta$-simple} if it is non-abelian and has no proper connected normal $\Delta$-closed subgroup. 
Again, a finite center is allowed. 
\item Similarly, a linear $\Delta$-algebraic group $G$ is \emph{$\Delta$-semi-simple}
if  it has no  non-trivial connected normal abelian $\Delta$-closed subgroup. 
\end{enumerate}
\end{defn}

The following results were shown by  Cassidy in \cite{Ca2}:

\begin{thm}\vlabel{thm14} (Cassidy, \cite{Ca2}, Theorem 14). Let $G$ be
  a connected linear $\Delta$-semi-simple $\Delta$-algebraic group. Then $G$ is
   $\Delta$-isomorphic to  a Zariski dense
  $\Delta$-algebraic subgroup of a connected semi-simple algebraic group $H$. \end{thm}

\begin{thm}\vlabel{thm15} (Cassidy, \cite{Ca2}, Theorem 15) Let $G$ be a
Zariski dense connected $\Delta$-closed subgroup
  of a semi-simple algebraic group $A\leq {\rm GL}(n,\calu)$, with
  simple components $A_1,\ldots,A_t$. Then there exist connected
  nontrivial $\Delta$-simple normal $\Delta$-closed subgroups $G_1,\ldots,G_t$
  of $G$ such that
  \begin{enumerate}
  \item If $i\neq j$, then $[G_i,G_j]=1$.
    \item The product morphism $G_1\times \cdots \times G_t\to G$,
      $(g_1,\ldots,g_t)\mapsto g_1 g_2\cdots g_t$,  is  onto, with finite kernel.
      \item $G_i$ is the identity component of $G\cap A_i$, and is
        Zariski dense in $A_i$.
        \item $G$ is $\Delta$-semi-simple.
    \end{enumerate}

\end{thm}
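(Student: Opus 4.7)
The plan is to define each $G_i$ as the Kolchin-connected component of $G\cap A_i$, then verify the stated properties in turn. The easy parts come first: since each $A_i$ is normal in $A$, the intersection $G\cap A_i$ is a $\Delta$-closed normal subgroup of $G$, and passing to the connected component preserves normality, so each $G_i$ is connected, $\Delta$-closed and normal in $G$. Because distinct simple factors of a semi-simple algebraic group commute, $[G_i,G_j]\subseteq [A_i,A_j]=\{1\}$ for $i\neq j$, proving (1) and ensuring that the product map $\mu:G_1\times\cdots\times G_t\to G$ is a $\Delta$-morphism of groups. The non-trivial work then reduces to three items: (a) each $G_i$ is Zariski dense in $A_i$; (b) the kernel of $\mu$ is finite and the image is all of $G$; and (c) each $G_i$ is $\Delta$-simple.

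For (a), let $B_i$ be the Zariski closure of $G_i$. Normality of $G_i$ inside the Zariski dense subgroup $G$ forces $B_i$ to be normal in $A$; being contained in $A_i$, simplicity of $A_i$ leaves only $B_i\leq Z(A_i)$ (finite) or $B_i=A_i$. To exclude the first alternative I would pass to the product isogeny $\tilde A=A_1\times\cdots\times A_t\to A$ with finite central kernel, and pull $G$ back to a Zariski dense $\Delta$-closed subgroup $\tilde G$ of $\tilde A$. A Goursat-type analysis of $\tilde G$ inside the direct product would then show that if all the $\tilde G\cap A_i$ were finite, the projections of $\tilde G$ to pairs $A_i\times A_j$ would be graphs of $\Delta$-correspondences between Zariski dense $\Delta$-closed subgroups of the two simple factors; the rigidity of simple algebraic groups (and careful bookkeeping when two factors are isogenous) would contradict the Zariski density of $\tilde G$ in $\tilde A$. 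Carrying out this Goursat-plus-rigidity argument is the principal technical obstacle.

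With (a) in hand, (b) is comparatively routine: $\ker\mu$ lies in $\prod_i\bigl(A_i\cap\prod_{j\neq i}A_j\bigr)\subseteq\prod_i Z(A_i)$, which is finite; and the image $G_1\cdots G_t$ is a $\Delta$-closed subgroup of $G$ with Zariski closure $A_1\cdots A_t=A$, so two Zariski dense, connected $\Delta$-closed subgroups of $A$ with one contained in the other must have the containment be an equality (up to passing to $\Delta$-connected components of $G$ if needed), giving (2). For (c), let $N\subsetneq G_i$ be a proper connected normal $\Delta$-closed subgroup; its Zariski closure is normal in $A_i$ (using density of $G_i$), hence either finite central (forcing $N=1$ by connectedness) or all of $A_i$. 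The second case would produce a proper Zariski dense $\Delta$-closed subgroup of $G_i$; ruling this out requires a minimality argument, e.g.\ taking $G_i$ to be a minimal Zariski dense connected normal $\Delta$-closed subgroup of $G$ contained in $A_i$, and this is the second technical obstacle. Finally, (4) follows from the near-product decomposition (2) together with the $\Delta$-simplicity and non-commutativity of the $G_i$: any connected commutative normal $\Delta$-closed subgroup of $G$ must intersect each $G_i$ trivially, hence be finite, hence trivial by connectedness.
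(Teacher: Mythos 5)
The paper does not prove this statement: it is quoted from Cassidy \cite{Ca2} (her Theorem 15) and used as a black box, so there is no in-house proof to compare yours with. Judged on its own, your proposal has the right skeleton --- set $G_i=(G\cap A_i)^0$, get normality and condition (1) from $[A_i,A_j]=1$, and reduce everything to (a) each $G\cap A_i$ is infinite and (b) each $G_i$ is $\Delta$-simple --- but the two steps you yourself flag as ``technical obstacles'' are precisely the content of the theorem, and your sketches for them do not yet work. For (a), the Goursat reduction as stated is too naive once $t\geq 3$: even if every $G\cap A_k$ is finite, the projection of $G$ to $A_i\times A_j$ need not be (close to) a graph, because $G\cap(A_i\times A_k)$ can be infinite; one must instead induct on the number of factors, and then actually carry out the rigidity step (Borel--Tits together with the fact that a $\Delta$-definable field isomorphism between definable subfields of $\calu$ is the identity, as in the Claim inside the proof of Theorem \ref{prop1}). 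For (b), replacing $G_i$ by a \emph{minimal} Zariski dense connected normal $\Delta$-closed subgroup both abandons conclusion (3), which pins $G_i$ down as $(G\cap A_i)^0$, and still leaves open the possibility of a proper infinite connected normal $\Delta$-closed subgroup that is merely \emph{not} Zariski dense; closing this really requires the classification of Zariski dense $\Delta$-closed subgroups of a simple group (Theorem \ref{Cassidy}) combined with Fact \ref{simple}, not a minimality trick.

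There is also an outright false step in your argument for (2): it is not true that two nested Zariski dense, connected $\Delta$-closed subgroups of $A$ must coincide. The pair $A(\calu^{\Delta})\subsetneq A(\calu)$ is a counterexample (both are $\Delta$-closed, connected for the Kolchin topology, and Zariski dense). So even granting (a) and (b), your proof does not show that the product morphism $G_1\times\cdots\times G_t\to G$ is onto (or even has image of finite index); surjectivity needs a separate argument, for instance showing that a generic $g\in G$, written as $a_1\cdots a_t$ with $a_i\in A_i$, has each $a_i$ lying in $G_i$ up to the finite center. The finiteness of the kernel and the deduction of (4) from (1)--(3) are fine (for (4) you should note that $[N,G_i]$ lands in the finite group $N\cap G_i$ and is therefore trivial by connectedness, whence $N$ centralizes the Zariski dense subgroup $G_1\cdots G_t$ and sits in $Z(A)$).
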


\begin{rem} Note the following consequence: Let
  $G$ be a group definable in $\calu$, with no definable subgroup of
  finite index, and with no definable infinite abelian quotient. Assume that if $A$ is a definable
  infinite abelian subgroup, then $N_G(A)$ has infinite index in
  $G$. Then there is a definable homomorphism $\varphi:G \to H(\calu)$ for
  some semi-simple algebraic group $H$, with $\varphi(G)$ Zariski dense
  in $H$ and $\Ker(\varphi)$ finite. 
  Indeed, Pillay's  Theorem \ref{P-fund} gives the existence of a
  definable embedding 
  $\varphi:G\to H(\calu)$ for some algebraic group $H$, and we may
  assume that $\varphi(G)$ is Zariski dense in $H$. Thus $H$ has no
  infinite definable abelian quotient, since $[G,G]=G$ implies $[H,H]=H$,
so that $H$ is linear. Further, if $A$ is a normal abelian algebraic
subgroup of $H$, then $A(\calu)\cap \varphi(G)$ is normal in $\varphi(G)$, and
therefore must be finite. Replace $H$ by $H/A$, and compose $\varphi$ with
the projection $H\to H/A$. Applying if necessary the same procedure to
$H/A$, we may assume that some quotient of $H$ is semi-simple, and that
the map $\varphi:G\to H(\calu)$ has finite kernel.\\
By 
  quantifier-elimination in $\dcf$, we know that $\varphi(G)$ is a
  $\Delta$-algebraic subgroup of $H(\calu)$. \\
  This is slightly stronger than Cassidy's result, since our group $G$
  is only ``$\call_\Delta$-definable'', i.e., is a Boolean combination
  of $\Delta$-closed sets. 
  \end{rem}

\begin{thm} \vlabel{Cassidy} (Cassidy, \cite{Ca2}, Theorem 19). Let $H$
  be a simple algebraic group defined and split over $\rat$, and $G\leq H(\calu)$ be a
  $\Delta$-algebraic subgroup which is Zariski dense in $H$. Then $G$ is definably isomorphic to $H(L)$,
  where $L$ is the constant field of a set $\Delta'$ of commuting derivations. Furthermore, the isomorphism is
  given by conjugation by an element of $H(\calu)$.  
\end{thm}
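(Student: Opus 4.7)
The plan is to analyze $G$ through its logarithmic derivatives, reducing the problem to a Lie-theoretic statement resolved by simplicity of the Lie algebra $\mathfrak{h}$ of $H$. For each $i$, let $\ell\delta_i\colon H(\calu)\to\mathfrak{h}(\calu)$ be the map $g\mapsto \delta_i(g)g^{-1}$; it satisfies the cocycle identity
$$\ell\delta_i(gh)=\ell\delta_i(g)+\mathrm{Ad}(g)\,\ell\delta_i(h),$$
so conjugating $G$ by $a\in H(\calu)$ shifts these logarithmic derivatives in an explicit, affine way.

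First I would reduce to the case where $G$ is Kolchin-connected: the identity component $G^0$ is $\Delta$-closed, of finite index in $G$, and still Zariski dense because $H$ is connected; and since $H$ split simple implies that $H(L)$ has no proper subgroup of finite index, any isomorphism $G^0\cong H(L)$ forces $G=G^0$. Fix then a Kolchin generic $g_0\in G$ over $\rat$ and set $c_i:=\ell\delta_i(g_0)\in\mathfrak{h}(\calu)$. Commutativity of the derivations forces the Maurer--Cartan identities $\delta_i(c_j)-\delta_j(c_i)=[c_j,c_i]$, which express that the $c_i$ define an integrable connection on the trivial $H$-bundle over $\calu$.

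The main technical step uses simplicity of $\mathfrak{h}$, together with a Chevalley basis available because $H$ is split over $\rat$, to produce an element $a\in H(\calu)$ and a set $\Delta'=\{D_1,\ldots,D_k\}$ of commuting $\calu$-linear combinations of the $\delta_i$ such that every element of $a^{-1}Ga$ is annihilated by each $D\in\Delta'$. Concretely, one examines the $\calu$-subspace $V\subseteq\mathfrak{h}(\calu)$ spanned by the $c_i$ together with their iterated derivatives and brackets; simplicity of $\mathfrak{h}$ forces a sharp dichotomy between the admissible shapes of $V$ (as an $\mathrm{Ad}$-stable object). In the ``full'' case one solves the gauge equation $\ell\delta_i(a)=c_i$ inside $H(\calu)$, possible by integrability in the differentially closed $\calu$; in the general case one isolates the residual directions along which the $c_i$ are nontrivial, and replaces the $\delta_i$ by $\calu$-linear combinations $D_j$ that annihilate $a^{-1}Ga$, with the $\rat$-split structure being used to guarantee that the $D_j$ commute and that their fixed field $L:=\calu^{\Delta'}$ is $\call_\Delta$-definable.

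Finally, $a^{-1}Ga\subseteq H(L)$ is immediate from the construction, and the reverse inclusion follows by comparing Kolchin dimensions: both $a^{-1}Ga$ and $H(L)$ are Zariski dense $\Delta$-closed subgroups of $H(\calu)$ annihilated by the same set of derivations, so simplicity of $H$ forces equality. The hardest step is the construction of $\Delta'$ and $a$: both simplicity of $\mathfrak{h}$ (to control the $\mathrm{Ad}$-stable $\calu$-subspaces of the Lie algebra) and the $\rat$-split structure of $H$ (to descend to a rationally defined, genuinely commuting set of derivations whose fixed field is $\call_\Delta$-definable) are essential; dropping either hypothesis breaks the dichotomy, and one should expect the weaker statements that appear in the paper's Theorem~\ref{prop1} in place of a clean isomorphism.
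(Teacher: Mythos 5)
First, note that the paper does not prove this statement: it is quoted from Cassidy (\cite{Ca2}, Theorem 19) and used as a black box, so there is no in-paper argument to compare yours with. Judged on its own, your sketch follows the broadly correct strategy (pass to the Lie algebra via logarithmic derivatives, classify the $\mathrm{Ad}$-stable differential subspaces of $\mathfrak{h}(\calu)$ using simplicity and a Chevalley basis, then gauge-transform), which is indeed the spirit of Cassidy's work. But essentially all of the mathematical content is concentrated in the step you label ``the main technical step'' and then merely assert: the claim that simplicity of $\mathfrak{h}$ forces a ``sharp dichotomy'' on $V$, and that one can extract from $V$ a genuinely commuting set $\Delta'$ of $\calu$-linear combinations of the $\delta_i$ together with an element $a\in H(\calu)$ conjugating $G$ into $H(\calu^{\Delta'})$, is precisely the content of Cassidy's classification of Zariski dense $\Delta$-Lie subalgebras of a simple Lie algebra and of the passage from the Lie algebra back to the group. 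Nothing in your text establishes it; the root-space analysis, the verification that the resulting derivations commute and have an $\call_\Delta$-definable constant field, and the recovery of the group from its Lie algebra (a differential algebraic group is not in general determined by its Lie algebra, so Zariski density must be invoked again at this stage) are all missing.

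Two further local problems. (i) Working with a single generic $g_0$ conflates cosets with subgroups: solving the gauge equation $\ell\delta_i(a)=c_i$ only shows that $a^{-1}g_0$ has constant entries, i.e.\ places $g_0$ in a coset of the constant points; to control the group you must use the logarithmic-derivative data of all of $G$ (equivalently its $\Delta$-Lie algebra) and track via the cocycle identity how the defining conditions transform under conjugation. (ii) The reduction to the Kolchin-connected case is not justified as written: the fact that $H(L)$ has no proper finite-index subgroup says nothing about whether $G^0\cong H(L)$ can sit with finite index inside the larger group $G$; one needs that $G$ normalises $aH(L)a^{-1}$ and then an analysis of $N_{H(\calu)}(H(L))$, e.g.\ via Borel--Tits as in the paper's proof of Theorem \ref{prop1}. (A minor point: with your conventions the Maurer--Cartan identity should read $\delta_i(c_j)-\delta_j(c_i)=[c_i,c_j]$.)
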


\begin{rem}\vlabel{rem-Cass}
  Cassidy's Theorem \ref{Cassidy}  is stated in different terms. Instead of speaking
  of {\em simple algebraic groups, defined and split over $\rat$} in \cite{Ca2}, she
  speaks of {\em simple Chevalley groups}. In fact, many of her results are
  stated in terms of Chevalley groups, but we chose not to do
  that. Recall that any simple algebraic group is isomorphic to one
  which is defined and split over the prime field, $\rat$ in our case. \\
Note also that the field $L$ of Theorem \ref{Cassidy} is
algebraically closed. We will therefore be able to use Fact 
\ref{simple} below.  

\end{rem}

\begin{fact}\vlabel{simple} Let $G$ be a simple algebraic group, let $K$ be an algebraically closed field. Then
  \begin{itemize}
  \item[(a)] The group $G(K)$ has no infinite normal subgroup;
  \item[(b)] The field $K$ is
  definable in the pure group $G(K)$.
  \end{itemize}
 
\end{fact}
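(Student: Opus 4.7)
For part (a), which I interpret as saying there is no infinite proper normal subgroup, the argument rests on the classical Chevalley simplicity theorem: for a simple algebraic group $G$ over an algebraically closed field $K$ of characteristic $0$, the centre $Z:=Z(G)(K)$ is finite and the quotient $G(K)/Z$ is simple as an abstract group. Given this, if $N\trianglelefteq G(K)$ is a proper normal subgroup, then its image in $G(K)/Z$ is either trivial (in which case $N\subseteq Z$ is finite) or all of $G(K)/Z$ (in which case $NZ=G(K)$, so $G(K)/N$ is a quotient of the finite abelian group $Z/(N\cap Z)$; but $G$ is perfect in characteristic $0$, so $G(K)=[G(K),G(K)]\subseteq N$, contradicting properness). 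Hence every proper normal subgroup is finite.

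For part (b), the task is to interpret the field $K$ in the pure group $G(K)$ using only first-order formulas in the language of groups. I would proceed by reconstructing the Lie-theoretic structure internally. First, define a Borel subgroup $B$ as a maximal solvable subgroup; solvability of length bounded by the rank of $G$ is a first-order condition. Inside $B$, define a maximal torus $T$ (for instance as a maximal divisible abelian subgroup) and the unipotent radical $U$. For each root $\alpha$ of $G$ relative to $T$, isolate the root subgroup $U_\alpha\cong \ga(K)\cong (K,+)$ as a definable piece of the descending central series of $U$, using that Chevalley's commutator formulas express $[U_\alpha,U_\beta]$ as a product of root subgroups $U_{i\alpha+j\beta}$. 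Finally, recover the full field structure on $K$ from the additive group $U_\alpha$ together with the conjugation action of $T$ on $U_\alpha$ via the root character $\alpha:T\to\gm$, which definably produces $K^\times$.

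A more streamlined alternative is to observe that for each root $\alpha$ the subgroup generated by $U_\alpha$ and $U_{-\alpha}$ is, after possibly passing to a central quotient, isomorphic to $\mathrm{SL}_2(K)$ or $\mathrm{PSL}_2(K)$, and then to invoke the classical fact that $K$ is already interpretable in $\mathrm{PSL}_2(K)$ through its natural action on the set of Borel subgroups (the field is recovered via cross-ratios).

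The main obstacle is the uniform first-order definability of the ingredients above --- Borel subgroups, maximal tori, unipotent radicals, and root subgroups --- in the pure group language. One must verify these definitions either uniformly across Dynkin types, or by reduction to an $\mathrm{SL}_2$-subgroup attached to a single root. Both routes are classical but require careful use of the structure theory of semisimple groups (Bruhat decomposition, root system combinatorics) to make fully explicit, and this is presumably why the statement is invoked as a Fact rather than reproved here.
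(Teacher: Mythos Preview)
Your proposal is correct. For part (b) there is nothing to compare: the paper simply cites Kramer--R\"ohrle--Tent (Theorem~3.2 in \cite{KRT}) rather than giving an argument, and your sketch follows the standard strategy behind such results.

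For part (a), your route differs from the paper's. You invoke the Chevalley simplicity theorem (abstract simplicity of $G(K)/Z$) as a black box and then do elementary group theory. The paper instead gives a direct, more self-contained argument: given $g\in G(K)\setminus Z(G(K))$, the set $g^{G(K)}g^{-1}$ is the image of the irreducible variety $G$ under a morphism, hence has irreducible closure, contains $1$, and is positive-dimensional; the subgroup it generates is therefore closed, connected, and (one checks) normal, so by simplicity of $G$ as an algebraic group it equals $G(K)$. Since $g^{G(K)}g^{-1}\subseteq N$ for any normal $N$ containing $g$, this forces $N=G(K)$. In effect the paper reproves the needed consequence of Chevalley's theorem from first principles, using only that $G$ has no proper positive-dimensional closed normal subgroup, whereas you take the abstract simplicity of $G(K)/Z$ as given. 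Both are fine; the paper's version is lighter on prerequisites, yours is shorter once the cited theorem is granted.
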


\noindent
Both assertions are well-known, but we were not able to find an easy
reference for the first assertion: it follows from the fact that if $g\in G(K)\setminus
Z(G(K))$, then the infinite set $(g^{G(K)}g\inv)$ is Zariski closed and
irreducible,  contains $1$, is closed under conjugation, and therefore generates a Zariski closed normal
subgroup of
$G(K)$, which must equal $G(K)$ since $G$ has no proper infinite normal
algebraic subgroup. The second is also well-known, see for instance 
Theorem~3.2 in \cite{KRT}.

 \subsection{Quantifier-free canonical bases} \vlabel{qfcb}
As $\dcfa$ is
 supersimple there is a notion of canonical basis for complete types
 which is defined as a sort of amalgamation basis, and is not easy to
 describe. In our case, we will focus on an easier concept:  canonical
 bases of quantifier-free types.  They are defined as follows:\\[0.05in] 
   We work in a model $(\calu, \si, \Delta)$ of $\dcfa$.
  Let $a$ be a finite tuple in
   $\calu$, and $K\subset\calu$ a difference-differential field. We
   define the {\em quantifier-free canonical basis} of $tp(a/K)$, denoted
   by $\qfcb(a/K)$, as the smallest difference-differential subfield $k$ of
   $K$ such that $k(a)_{\si,\Delta}$ and $K$ are linearly disjoint over
   $k$. Another way of viewing this field is as the smallest
   difference-differential subfield of $K$ over which the smallest
   $K$-definable $\si$-$\Delta$-closed set  containing $a$ is defined
   (this set is called the
  {\em  $\si$-$\Delta$-locus of $a$ over $K$}). Analogous notions exist
  for $\dcf$ and ACFA. We
   were not able to find explicit statements of the following easy
   consequences of the Noetherianity of the $\si$-$\Delta$-topology, so
   we will indicate a proof.

   \begin{lem} \vlabel{cb1} Let $a,K\subset\calu$ be as above.
     \begin{enumerate}
\item $\qfcb(a/K)$ exists and is unique; it is finitely generated as a
   difference-differential field.

  \item Let $K\subset  M\subset K(a)_{\si,\Delta}$. Then
    $M=K(b)_{\si,\Delta}$ for some finite tuple $b$ in
    $M$. 

       \end{enumerate}

     \end{lem}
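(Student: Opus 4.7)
The plan is to define $V$ to be the $\si$-$\Delta$-locus of $a$ over $K$, i.e., the smallest $\si$-$\Delta$-closed subset of $\calu^n$ defined over $K$ containing $a$; by Noetherianity of the $\si$-$\Delta$-topology this is a well-defined irreducible set, cut out by finitely many $\si$-$\Delta$-polynomials with coefficients in $K$. For part (1), I would identify $\qfcb(a/K)$ with the field of definition of $V$ inside $K$. The key equivalence is that a $\si$-$\Delta$-subfield $k$ of $K$ makes $k(a)_{\si,\Delta}$ and $K$ linearly disjoint over $k$ if and only if $V$ is defined over $k$: in that case the locus of $a$ over $k$ already equals $V$ (by minimality of the locus combined with extension of scalars), and the corresponding isomorphism $k\{a\}_\si \otimes_k K \cong K\{a\}_\si$ translates directly into linear disjointness; conversely, linear disjointness forces the $\si$-$\Delta$-prime ideals to match after base change.

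Existence, uniqueness, and finite generation of the field of definition would then be established along standard algebro-geometric lines. The family $\mathcal{F}$ of $\si$-$\Delta$-subfields of $K$ over which $V$ is defined contains $K$, and the main step is to show that $\mathcal{F}$ is closed under arbitrary intersection, so that $\bigcap \mathcal{F}$ is itself its smallest member. This closure property is where I expect the main obstacle to lie: given $k_1, k_2 \in \mathcal{F}$, one picks a $k_1 \cap k_2$-basis of the compositum $k_1 k_2$ extending bases of $k_1$ and of $k_2$ separately, expands elements of the defining ideal in this basis, and extracts a generating set of the $\si$-$\Delta$-ideal of $V$ that lies in $(k_1 \cap k_2)\{y\}_\si$. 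Finite generation of $\bigcap \mathcal{F}$ follows because $V$ is cut out by finitely many polynomials defined over some finitely generated $k_0 \subseteq K$, forcing $\bigcap \mathcal{F} \subseteq k_0$.

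For part (2), I would apply part (1) to $a$ over $M$. Let $V$ be the $\si$-$\Delta$-locus of $a$ over $M$; by part (1), $V$ is defined over a finitely generated $\si$-$\Delta$-subfield $N = K(b)_{\si,\Delta}$ of $M$, for some finite tuple $b \in M$. Since any $N$-$\si$-$\Delta$-closed set containing $a$ is also $M$-closed and hence contains $V$, while $V$ is itself $N$-definable and contains $a$, the set $V$ is also the locus of $a$ over $N$. By the equivalence from part (1) (with $K$ replaced by $M$), $N(a)_{\si,\Delta}$ and $M$ are linearly disjoint over $N$. Now observe the chain $N \subseteq M \subseteq K(a)_{\si,\Delta} \subseteq N(a)_{\si,\Delta}$: if there were $m \in M \setminus N$, then $\{1, m\}$ would be $N$-linearly independent, and by linear disjointness would remain linearly independent over $N(a)_{\si,\Delta}$, contradicting the dependence $m \cdot 1 + (-1) \cdot m = 0$ with $m \in N(a)_{\si,\Delta}$. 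Hence $M = N = K(b)_{\si,\Delta}$ is finitely generated as a difference-differential field over $K$.
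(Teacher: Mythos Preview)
Your argument for part (2) is correct and coincides with the paper's: both take $B=\qfcb(a/M)$, note that $N:=KB$ is finitely generated over $K$, and use linear disjointness of $N(a)_{\si,\Delta}$ and $M$ over $N$ together with $M\subseteq K(a)_{\si,\Delta}\subseteq N(a)_{\si,\Delta}$ to conclude $M=N$.

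For part (1) your overall strategy is sound, but the paper takes a different and more economical route, and your sketch of the key step has a gap. The paper does \emph{not} work with $\si$-$\Delta$-fields of definition directly. Instead it writes $K\{y\}_\si=\bigcup_r K[r]$ as an increasing union of ordinary finitely generated polynomial rings, observes that the Zariski-type sets $X[r]$ cut out by $I[r]=\{f\in K[r]:f(a)=0\}$ stabilise at some finite $r$ (Noetherianity of the $\si$-$\Delta$-topology), and then simply invokes the classical fact that a prime ideal of an ordinary polynomial ring has a unique smallest field of definition $k_0$, finitely generated over the prime field. The difference-differential field $k$ generated by $k_0$ is then shown to satisfy the linear disjointness condition via the stabilisation $X[s]=X[r]$ for $s\geq r$. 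This buys the paper the existence/uniqueness/finite-generation in one stroke, by reducing to a standard result in commutative algebra.

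Your proposed argument for closure of $\mathcal F$ under intersection is where the gap lies: you cannot in general choose a $k_1\cap k_2$-basis of $k_1k_2$ that simultaneously extends bases of $k_1$ and of $k_2$, since $k_1$ and $k_2$ need not be linearly disjoint over $k_1\cap k_2$. A correct proof of intersection-closure for fields of definition (in characteristic $0$) typically goes through uniqueness of reduced Gr\"obner bases, or through derivations, and is not as immediate as your sketch suggests. The paper's reduction to a single ordinary polynomial ring $K[r]$ is precisely what lets it quote this result as ``well known'' rather than reprove it.
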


\prf (1) Let $n=|a|$, and write  $K\{y\}_\si=\bigcup_{r\in\nat} K[r]$, where  $$K[r]=K[\si^i\delta_1^{i_1}\delta_2^{i_2}\cdots\delta_m^{i_m}y_j\mid
   1\leq j\leq n, |i|+\sum_j i_j\leq r].$$
Then each $K[r]$ is finitely generated over $K$ as a ring, and is
therefore Noetherian. For each $r$, consider the ideal $I[r]=\{f\in K[r]\mid f(a)=0\}$, and
the corresponding $\si$-$\Delta$-closed subset $X[r]$ of $\calu^n$
defined by $I[r]$. Then the sets $X[r]$ form a decreasing sequence of
$\si$-$\Delta$-closed subsets of $\calu^n$, which stabilises for some
$r$, which we now fix. Note that the ideal $I[r]$ is a prime ideal (of
the polynomial ring $K[r]$), and as such has a smallest field of
definition, say $k_0$, and that   $k_0$ is finitely generated as a
field, and is unique. We now let $k$ be the difference-differential
field generated by $k_0$. \\[0.05in]
{\bf Claim 1}.
$k(a)_{\si,\Delta}$ and $K$ are linearly disjoint over $k$.

\begin{proof}
This follows from the fact that $X[s]=X[r]$ for every $s\geq r$.
\end{proof}

\noindent
(2) Consider $B:=\qfcb(a/M)$. By (1), $B$ is finitely generated as a
difference-differential field. \\[0.05in]
{\bf Claim 2}.  $KB=M$.

\begin{proof}
Indeed, by definition, $B(a)_{\si,\Delta}$ and $M$ are linearly disjoint
over $B$. Hence, $KB(a)_{\si,\Delta}$ and $M$ are linearly disjoint over
$KB$. But this is only possible if $KB=M$.
\end{proof}

\begin{rem}\vlabel{canonicalbase}
Given  fields $K\subset L$ (of characteristic $0$), 
    the field $L$ is a regular extension of $L_0:=K^{alg}\cap L$. So, if
    $L=K(a)_{\si,\Delta}$ for some (maybe infinite) tuple $a$, then
    $B:=\qfcb(a/K^{alg})$  is contained in $L_0$, and we have
    $KB=L_0$: the inclusion is clear, and the reverse
    inclusion follows by noting that  the linear disjointness of
    $B(a)_{\si,\Delta}$ and 
    $K^{alg}$ over $B$ implies the linearly disjointness of
    $KB(a)_{\si,\Delta}$ and $K^{alg}$ over $KB$, whence $KB$ must
    contain $L_0$. \\[0.05in]
In positive characteristic $p$, if $L$ is a separable extension of $K$,
then $L$ and $K^s$ are linearly disjoint over their intersection $L_0$:
this is because $K^s/K$ is Galois.  {We will use this remark later}   

\end{rem}
 
\subsection{Some well-known group-theoretic results}

\begin{lem}\vlabel{subdirect}
 Let $G_1$ and $G_2$ be groups, and $H\leq G_1\times G_2$ a proper subgroup,
 such that $H$ projects onto $G_1$ and onto $G_2$ (via the two natural
 projections $G_1\times G_2\to G_i$). Let $S_1$ be defined by $H\cap
 (G_1\times (1))=S_1\times (1)$
 and $S_2$ by $H\cap  ((1)\times G_2)=(1)\times S_2$. Then $S_i$ is a
 normal subgroup of $G_i$, and $H/(S_1\times S_2)$ is the graph of a group
 isomorphism $f:G_1/S_1\to G_2/S_2$.\\
 Furthermore, if the $G_i$ and $H$ are definable, so are the $S_i$ and
 the isomorphism $f$. 

\end{lem}

\prf Let $g\in G_1$; because $H$ projects onto $G_1$, there is some
$g'\in G_2$ such that $(g,g')\in H$; thus $(g,g')\inv (S_1\times
(1))(g,g')=g\inv S_1g\times (1)$ is contained in $H$, which shows
$S_1\unlhd G_1$. Similarly, $S_2\unlhd G_2$.\\
For the second assertion, we may assume that $S_1=S_2=(1)$. I.e, given
$g\in G_1$ there is a unique $g'\in G_2$ such that $(g,g')\in H$, and
given $g'\in G_2$ there is a unique $g\in G_1$ such that $(g,g')\in H$:
this exactly says that $H$ is the graph of a bijection between $G_1$ and
$G_2$, and a moment's thought gives that it is an isomorphism.\\
The last statement is obvious.  \qed 

\begin{cor}\vlabel{subdirect-ss} Let $H\leq G\times G_2$ be 
  as above, with $G$ and $H$ definable, centerless, and $G_2=\prod_{i=1}^sG'_i$,
  and where $G$ and  each $G'_i$ is definable and definably simple centerless. Then for some
  index $i$, the image of $H$ in $G\times G'_i$ via the natural
  projection $G\times G_2\to G\times G'_i$, is the graph of a
  definable isomorphism $f$ between $G$ and $G'_i$. \\
  If moreover $G=H(L)$ and
  the $G'_i=H_i(L_i)$, where $H$ and the $H_i$ are simple algebraic
  groups defined over $\rat$,
  and the fields $L$ and $L_i$ are fields of constants of the
  differentially closed field  $\calu$, then $f$ is the
  restriction of an algebraic isomorphism $H\to H_i$ and $L=L_i$. 
 
\end{cor}

\prf In the notation of Lemma \ref{subdirect}, $S_1=(1)$, and $S_2$ is a
definable 
normal subgroup of $\prod G'_j$, hence is a product of some of the
factors,  and in fact of $s-1$ of them:  $H/(S_1\times S_2)$ is the graph
of  a group isomorphism $f$ between
the definably simple group $G$ and $G_1/S_2$, so $G_1/S_2$ must be (isomorphic via
the natural projection to) one of the 
 $G'_i$. 

The moreover part follows from  results of Borel-Tits (see Theorem A in
\cite{Bo-Ti}, or 2.7, 2.8 in \cite{St72}, or Theorem~4.17 in \cite{Po})
which describe abstract isomorphisms between simple algebraic groups:
there are an algebraic isomorphism $\varphi:H\to H_i$, and field
isomorphism $\psi:L\to L_i$, such that $f=\bar\psi\circ
\varphi$, where $\bar\psi$ is the isomorphism $H_i(L)\to H_i(L_i)$
induced by $\psi$. Since $f$ and $\varphi$ are definable, so is
$\bar\psi$. By  Fact \ref{simple}(b), $\psi$ is also definable. A result of S. Suer (Theorem 3.6 in \cite{S-JSL2012})
then tells us that $L=L_i$, and using Theorem \ref{autL}, we obtain that
$\psi$ is the identity. Hence $f=\varphi$. \qed

\begin{defn} \vlabel{abuse} Let $G_i\leq H_i(\calu)$ be Zariski dense subgroups of
the algebraic groups  $H_i$ for $i=1,2$, and let $f:G_1\to G_2$ be an isomorphism. By abuse
  of language, we will
  say that $f$ is an {\em algebraic isomorphism} if it is the
  restriction to $G_1$ of an algebraic isomorphism $H_1\to H_2$.
  \end{defn}

\section{The isogeny result}

\noindent
We  work in a  sufficiently saturated model $(\calu, \Delta, \si)$ of
$\dcfa$. We will often work in its reduct to $\call_\Delta$. Unless
otherwise mentioned, definable will mean
$\call_{\si,\Delta}$-definable in $\calu$ with parameters.

\begin{defn}\vlabel{definably-quasi-simple} Let $G_1$, $G_2$, $G$ be  definable
  groups.
  \begin{enumerate}
    
\item Recall that $G_1$ and $G_2$ are  {\em definably isogenous} if there are
  definable subgroups $H_i$ of $G_i$ of finite index, 
  finite normal subgroups $S_i$ of $H_i$ for $i=1,2$, and a definable
  isomorphism 
  $H_1/S_1\to H_2/S_2$.
  
  \item 
    We say that $G$ is
  \emph{definably quasi-simple} if $G$ has no definable abelian subgroup
  of finite index, 
      and if  whenever $H$ is a definable
  infinite subgroup of $G$ of infinite index, then its normaliser $N_G(H)$ has infinite
  index in $G$.
  \item 
    We say that $G$ is {\em definably quasi-semi-simple} if 
        whenever $H$ is a
  definable infinite abelian subgroup of $G$, then its normaliser
  $N_G(H)$ has infinite index in $G$, and if $G$ has no definable
    normal subgroup $N$ such that $G/N$ is definably isogenous to an
    infinite abelian group. 
\end{enumerate}
\end{defn}

\begin{remark}\vlabel{rem-defss} Conditions (2) and (3) will ensure that
  if there is a definable homomorphism $f:G\to H(\calu)$ for some
  algebraic group $\calu$, and with $\Ker(f)$ finite, then the Zariski
  closure of $f(G)$ will be a linear algebraic group. \\
  In our context (of a supersimple theory),  a definable group will in general have infinitely many
  definable subgroups of finite index, so it will not have a smallest
  one. We devised these properties to take care of that problem, and we
  will show below that they are preserved by isogenies. We first need a
  lemma:\end{remark} 
  \begin{lem} \vlabel{abelian} Assume that $G$ is an infinite definable 
    group. 
\begin{enumerate}
\item Assume that $G$ is  abelian, and let $0\neq
  n\in\nat$. Then $[G:G^n]$ is finite.
\item Assume that $G$ is definably isogenous to an abelian group. Then
  $G$ has an infinite abelian subgroup $A$ of finite index.    

  \end{enumerate}
  \end{lem}

\prf (1) By Lemma \ref{gpdef}, we know that there are a subgroup $A$
of finite index in $G$,   and a definable $f:A\to H(\calu)$ for some
algebraic group $H$, with $\Ker(f)$ finite.  Then it
suffices to show the conclusion for $f(A)$: because $\Ker(f)$ is finite,
$A^n$ has index at most $|\Ker(f)|$ in  $f\inv(f(A))^n$.

Hence we may assume that $A$
 is Zariski dense in
 $H$. Then $H$ is abelian as well,  and we may assume it is connected.

 \smallskip\noindent
     {\bf Claim}. The $n$-torsion subgroup $Z$ of $H(\calu)$ is finite.\\
     The proof is by induction on $\dim(H)$. Assume first that $H$ has
     no proper algebraic subgroup. Then $H$ is one of $\ga$, $\gm$ or a
     simple abelian variety. In all three cases, $Z$ is finite (characteristic $0$ implies that $\ga(\calu)$ is torsion-free). 

  Assume now that     $N$
  is a minimal (proper, infinite) algebraic subgroup of $H$; then by the first
  case, $Z\cap N(\calu)$ is finite. The group $ZN(\calu)/N(\calu)$ is contained in the
  $n$-torsion subgroup of $(H/N)(\calu)$, which is finite by induction
  hypothesis. This proves the claim. 

\medskip\noindent
The claim implies that $A$ and
$A^n$ have the same SU-rank, and therefore that $A^n$ has finite index
in $A$. As $[G:A]$ is finite and $A^n\leq G^n$, we obtain the result. \\[0.05in]
(2)  Let $A$ be a definable subgroup of $G$ of finite
index, and $Z$ a normal finite (of size $n$) subgroup of $A$ such that $A/Z$ is
abelian and infinite.\\ 
Going to a definable subgroup of $A$ of finite index ($C_A(Z)$, which
has index bounded by the size of the largest abelian subgroup of ${\rm Sym}(n)$), we may
assume that  $Z$ is central in $A$.  Thus the map
$$A\times A\to Z,\ (g,h)\mapsto [g,h]$$
is a bilinear map. Then $A^n$ is a definable
subgroup of $A$, which is abelian since every element of $A^n$ is of
the form $g^n$ for some $g\in A$. By (1), $(A/Z)^n$ has finite index in
$A/Z$, and this implies that $A^n$ has finite index in $A$, 
and therefore also in $G$. \qed

  \begin{lem}\label{lem-defss} Let $G$ be a definable group, $G_0$ a definable
    subgroup of $G$ of finite index, and $Z$ a finite normal subgroup of $G$.
    \begin{enumerate}
 \item Assume that $G/Z$ has a definable abelian subgroup $A'$, such that
   $N_{G/Z}(A')$ has finite index in $G/Z$. Then $G$ has a definable
   abelian 
   subgroup $A$, with $N_G(A)$ of finite index. Furthermore, $[A':ZA]$
   is finite.        
\item $G$ is definably quasi-simple if and only if $G_0$ is definably
  quasi-simple.
  \item $G$ is definably quasi-simple if and only if $G/Z$ is definably
    quasi-simple. 
 \item The  assertions of items (2) and (3) hold with ``quasi-semi-simple'' in
   place of  ``quasi-simple''.     
      \end{enumerate}
  \end{lem}

  \prf 
  (1) Going to a definable sugroup of $G$ of finite index, we
  may assume that $Z$ is central in $G$. Let $A\leq G$ be the 
  subgroup of $G$ which contains $Z$ and
  projects onto $A'$. Then $A$  is definable, and by Lemma \ref{abelian}(2), it has a definable
  abelian subgroup $A_0$ of finite index. The
  proof of Lemma \ref{abelian}(2)  shows that in fact we may take
  $A_0=A^{|Z|}$. As $N_{G/Z}(A')$ has finite index in
  $G/Z$, so does $N_G(A)$ in $G$, because $Z$ is central in $G$, and so
  does $N_G(A_0)$ since $A_0$ is a characteristic subgroup of $A$.\\[0.05in] 
 (2)  Suppose $G_0$ is definably quasi-simple,  let $H$ be an infinite definable subgroup of $G$ of infinite
  index, and assume that $N_G(H)$ has finite index in $G$. Then
  $N_G(H)\cap N_G(G_0)$ has finite index in $G$,  is contained in $N_{G}(H\cap
  G_0)$, and therefore
  $N_G(H\cap G_0)\cap G_0=N_{G_0}(H\cap G_0)$ has finite index in $G_0$.
  \\
  However $[H:H\cap G_0]$ finite implies $H\cap G_0$ infinite; as $G_0$ is definably quasi-simple,
  $N_{G_0}(H\cap G_0)$ has  infinite index in $G_0$, therefore in $G$, and
  we get the desired contradiction. That $G$ has no definable abelian subgroup
  of finite index is clear. \\ 
  For the other direction, assume that $G$ is definably
  quasi-simple. Let 
  $H$
  be an infinite definable  subgroup of $G_0$ of infinite index in
 $G_0$, and suppose that 
  $N_{G_0}(H)$ has finite index in $G_0$; then $N_G(H)$, which contains
  $N_{G_0}(H)$,  has finite index
  in $G$, which gives us the desired contradiction. Clearly if $A$ is a
  definable abelian subgroup of $G_0$ of finite index in $G_0$, then $A$
  has finite index in $G$. \\[0.05in]
(3) By (2), going to a definable sugroup of $G$ of finite index, we
  may assume that $Z$ is central in $G$. We will first deal with the
  condition on definable abelian subgroups. If $A$ is a definable
  abelian subgroup of finite index in $G$, then so is $AZ/Z$ in
  $G/Z$. If $A$ is a definable abelian subgroup of $G/Z$ of finite index, use
  Lemma \ref{abelian} to obtain a definable abelian subgroup $B$ of
  finite index in $G$. \\[0.05in]
  Assume $G/Z$ is definably
  quasi-simple, and let $H$ be an infinite definable
  subgroup of $G$ of infinite index.  Then $HZ/Z$ is infinite and has
  infinite index in $G/Z$, so its normalizer $N$ has infinite index in
  $G/Z$, and if $N'\geq Z$ is such that $N'/Z=N$, then $N'$ has
  infinite index in $G$ and normalizes $HZ$. As $Z$ is normal
  in $G$, we have $N_G(H)/Z \leq N$, which shows that $[G:N_G(H)]$ is
  infinite. The other direction is immediate
  because $Z$ is normal. \\[0.05in]
  (4) To prove the equivalence of the first condition of definably quasi-semi-simple: reason as in (2) and (3), noting that if $Z$ is the center of $G$, 
   $A'\leq G/Z$ is abelian definable  
  and $N_{G/H}(A')$ has finite
  index in $G$, and if $A$ lifts $A'$, then $N_G(A)$ has also finite
  index in $G$, and normalises the characteristic abelian subgroup
  $A^{|Z|}$ of $A$ (see the proof of Lemma \ref{abelian}). \\
  To prove the equivalence of the second condition: let $G_0$ be a definable subgroup of $G$ of
  finite index, and assume that it has a normal subgroup $N$ such that
  $G_0/N$ is isogenous to an abelian group (and therefore has a
  definable abelian subgroup of finite index, by Lemma \ref{abelian}). Without loss of generality, $G_0$
  is normal in $G$, so that if $g\in G$, then $G_0/N^g$ is virtually
  abelian. As $N$ has finitely many conjugates, the map $G_0\mapsto
  \prod_{g\in G/G_0}G_0/N^g$ has kernel $M:=\bigcap N^g$, so that
  $G_0/M$ embeds into the virtually abelian group $\prod_{g\in
    G/G_0}G_0/N^g$. The other direction is clear: if $G/N$ is virtually
  abelian, then so is $G_0/N\cap G_0$. \\
  Finally, let $Z$ be a finite normal subgroup of $G$, which we may
  assume to be central. A normal subgroup $N$ of $G/Z$ with $(G/Z)/N$
  virtually abelian, pulls back to a normal subgroup $N'$ of $G$ with
  $G/N'\simeq (G/Z)/N$; and if $N$ is a normal subgroup of $G$ with
  $G/N$ virtually abelian, then so is $GZ/NZ$. \qed

\begin{prop} \label{thm1} Let $G$ be a group definable in $\calu$, and
  assume that $G$ is definably quasi-simple (resp. definably
  quasi-semi-simple). Then there are a definable subgroup $G_0$ of finite index in $G$,
  a simple (resp. semi-simple) algebraic 
  group $H$ defined and split over $\rat$, 
  and a definable homomorphism $\phi:G_0\to H(\calu)$, with finite
  kernel and Zariski dense image. Moreover, if $\bar
  G$ is the connected component of the Kolchin closure of $\phi(G_0)$, then
  $\bar G$ is $\Delta$-simple (resp. $\Delta$-semi-simple).
\end{prop}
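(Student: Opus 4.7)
My strategy proceeds in three stages: (i) produce a definable homomorphism with finite kernel from a finite-index subgroup $G_0\leq G$ into the $\calu$-points of an algebraic group $L$, with Zariski dense image; (ii) use the definable quasi-(semi-)simplicity of $G$ to force $L$ to be simple (resp.\ semi-simple); (iii) take a Kolchin closure and apply Cassidy's classification to obtain the desired $\Delta$-algebraic group $H$, defined and split over $\rat$.

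For stage (i), groups definable in $\dcfa$ are, up to passage to a finite-index subgroup and isogeny by a finite kernel, isogenous to subgroups of the $\calu$-points of algebraic groups. This can be derived from the one-basedness of $\dcfa$ over $\dcf$ recorded in Remark~\ref{rem-LS1}(b), which reduces generic types in $\dcfa$ to their $\dcf$-reducts, combined with Pillay's theorem cited in Section~2.3 that every differential algebraic group embeds into an algebraic group. One obtains a finite-index definable subgroup $G_0\leq G$ and a definable homomorphism $\psi\colon G_0\to L(\calu)$ with finite kernel, and after replacing $L$ by the Zariski closure of the image we may assume $\psi(G_0)$ is Zariski dense in $L$.

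For stage (ii), suppose $N$ is a proper non-trivial connected normal algebraic subgroup of $L$. Composing $\psi$ with $L\to L/N$ gives a definable homomorphism with Zariski dense image, hence infinite image; so the kernel $K:=\psi\inv(N(\calu))$ has infinite index in $G_0$. Moreover $K$ is infinite: otherwise the composed map would have finite kernel, and by preservation of dimension under finite-kernel definable maps we would get $\dim L=\dim G_0=\dim(L/N)$, contradicting $\dim N>0$. Since $K$ is normal in $G_0$, its normaliser in $G$ contains $G_0$ and so has finite index in $G$, contradicting definable quasi-simplicity. In the quasi-semi-simple case, choose $N$ commutative; then $[K,K]$ is contained in the finite group $\ker\psi$, so by Schur's theorem $Z(K)$ has finite index in $K$, and is a definable characteristic commutative subgroup of $K$, hence normal in $G_0$, infinite, and of infinite index, contradicting quasi-semi-simplicity. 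The ``no abelian subgroup of finite index'' clause in both definitions excludes commutative $L$, so $L$ is simple (resp.\ semi-simple). Since any connected semi-simple algebraic group over an algebraically closed field of characteristic $0$ is isomorphic to one defined and split over $\rat$, we may compose $\psi$ with such an isomorphism and assume $L$ itself is defined and split over $\rat$.

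For stage (iii), let $H$ be the Kolchin closure of $\psi(G_0)$ in $L(\calu)$. Then $H$ is a Zariski dense $\Delta$-closed subgroup of $L(\calu)$, hence by Theorem~\ref{thm15} is $\Delta$-simple (resp.\ $\Delta$-semi-simple); the map $\phi:=\psi\colon G_0\to H(\calu)$ has finite kernel and, by construction, Kolchin dense image, as required. The main obstacle lies in stage (i): establishing the initial definable isogeny into an algebraic group. If no direct reference is available, this can be carried out by mirroring the Kowalski--Pillay argument for $\acfa$ in the $\dcfa$ setting, invoking the supersimplicity of $\dcfa$ (Theorem~\ref{LS1}(5)), Remark~\ref{rem-LS1}(c) on prolongations to stabilise a sequence of Kolchin closures using the Noetherianity of the $\Delta$-topology, and Pillay's embedding theorem applied inside the $\dcf$-reduct.
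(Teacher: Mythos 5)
Your overall architecture (one-basedness over $\dcf$ to obtain a finite-kernel definable map into an algebraic group, then use definable quasi-(semi-)simplicity to control normal subgroups, then apply Cassidy) matches the paper's, but stage (ii) contains a genuine error. The claim that $K=\psi\inv(N(\calu))$ is infinite for every proper non-trivial connected normal algebraic subgroup $N$ of $L$ is false, and the dimension argument offered for it does not make sense: $\psi(G_0)$ is only Zariski dense in $L$, not equal to $L(\calu)$, so there is no identity $\dim G_0=\dim L$ in any sense that would run the argument (in $\dcfa$ a Zariski dense definable subgroup typically has much smaller SU-rank than the ambient algebraic group, e.g.\ $H(\fix(\si))$ inside $H(\calu)$). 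Concretely, take $H$ simple and centerless and $G=\{(g,\si(g))\mid g\in H(\calu)\}\leq (H\times H)(\calu)$ --- the example discussed in the paper just before Lemma \ref{lem3}. This $G$ is definably quasi-simple (it is definably isomorphic to $H(\calu)$ via the first projection), its Zariski closure is $L=H\times H$, which is not simple, and for $N=H\times 1$ the preimage $K$ is trivial while the composition of $\psi$ with $L\to L/N$ is injective with Zariski dense image; no contradiction arises. So your stage (ii) cannot conclude that $L$ is simple, and stage (iii) would then hand you a $\Delta$-semi-simple group with several simple factors where the proposition requires a $\Delta$-simple one.

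The correct move, and what the paper does, is not to seek a contradiction but to exploit a dichotomy: if $\phi\inv(N)\cap G_0$ is infinite, then since it is normal in $G_0$ its normaliser has finite index in $G$, so quasi-simplicity forces it to have finite index, whence $N$ contains the closure of a finite-index subgroup of the image and, by connectedness, $N$ is everything; otherwise $\phi\inv(N)\cap G_0$ is finite, and one composes $\phi$ with the quotient map $\bar G\to\bar G/N$, which keeps the kernel finite and the image dense. Iterating (Noetherianity of the $\Delta$-topology) one reaches a $\Delta$-simple, resp.\ $\Delta$-semi-simple, group, and Theorems 17 and 18 of \cite{Ca2} then give the realisation inside a (semi-)simple algebraic group defined and split over $\rat$. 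A secondary problem: in your quasi-semi-simple case you invoke ``Schur's theorem'' in the direction ``$[K,K]$ finite implies $[K:Z(K)]$ finite''; this converse of Schur's theorem is false in general (infinite extraspecial-type groups are counterexamples), though the point becomes moot once the argument is rewritten as a quotienting procedure rather than a proof by contradiction.
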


\prf By Lemma \ref{gpdef}, there is a definable subgroup $G_0$ of $G$ of
finite index, and a definable homomorphism $\phi:G_0\to H(\calu)$ where
$H$ is a connected algebraic group, $\phi(G_0)$ is Zariski dense in $H$,
and $\Ker(\phi)$ is finite. Note that, as $G_0$ has no infinite
definable quotient which is definably isogenous to an abelian group, the
connected 
algebraic group $H$ has no abelian quotient, and therefore $H$ is
linear. We will now show that we can assume that $H$ is simple
(resp. semi-simple). \\[0.05in]
Assume first that $G$ is definably quasi-simple, and let $N$ be a maximal normal proper algebraic subgroup of
$H$. Then  $N(\calu)\cap \phi(G_0)$ is
finite, and composing $\phi$ with the natural projection $H(\calu)\to
(H/N)(\calu)$, replacing $H$ by $H/N$, we obtain that $H$ can be chosen
to be simple. \\[0.05in]
Similarly, if $G$ is definably quasi-semi-simple, and because $H$ is linear, there is an algebraic connected normal subgroup  $N$
of $H$, maximal among the connected normal solvable algebraic subgroups
of $H$. Then $H/N$ has no proper connected normal abelian algebraic
subgroup, and therefore is semi-simple. An easy induction on the class
of solvability of $N$ shows that $N(\calu)\cap \phi(G_0)$ must be
finite. Again, we may replace $H$ by $H/N$ to obtain that $H$ is
semi-simple. \\[0.05in]
Let $\bar G$ be the Kolchin closure of $\phi(G_0)$ in $H$; replacing
$G_0$ by a subgroup of finite index, we may assume that $\bar G$ is
connected for the Kolchin topology. Then $\bar G$ is Zariski dense in $H$, and by Theorem
\ref{thm15}, $\bar G$ is $\Delta$-semi-simple. Further, if $H$ is
simple, then $\bar G$ is $\Delta$-simple. That $H$ can be taken defined
and split over $\rat$ is because every semi-simple algebraic group is
isomorphic to one such. \qed

\section{Definable subgroups of semi-simple algebraic 
  groups} \label{SDefGroups}

In this section we give a description of Zariski dense definable subgroups of simple and semi-simple algebraic groups. 
We  work in a  sufficiently saturated model $(\calu, \Delta, \si)$ of
$\dcfa$.  Unless
otherwise mentioned, definable will mean
$\call_{\si,\Delta}$-definable with parameters.

\begin{thm} \label{prop1}  Let $H$ be a simple algebraic group defined
  and split 
  over $\rat$, and $G$ a definable subgroup
  of $H(\calu)$ which is Zariski dense in $H$.\\
  Then $G$ has a definable subgroup $G_0$ of finite index, the
        Kolchin closure of which   is conjugate to $H(L)$ (say by an
        element $g\in H(\calu)$), where $L$ 
         is a field of constants in $\calu$. Furthermore,
         either $G_0^g=H(L)$, or $G_0^g\subseteq H(\fix(\si^\ell)(L))$ for some
         integer $\ell\geq 1$. 
In the latter case, if $H$ is centerless, we are able to describe
precisely the subgroup $G_0^g$ as $\{h\in H(L)\mid \si^n(h)=\varphi(h)\}$
for some $n$ and algebraic automorphism
$\varphi$ of $H(L)$.

\end{thm}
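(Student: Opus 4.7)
The plan is to combine Cassidy's theorem (Theorem \ref{Cassidy}) with a Chatzidakis--Hrushovski--Peterzil style SU-rank dichotomy, transferring the analysis from $H(\calu)$ down to the $\call_\Delta$-definable subfield $L$ produced by Cassidy.

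First I would take the Kolchin closure $\tilde G$ of $G$ inside $H(\calu)$. Since $G$ is Zariski dense in $H$, so is $\tilde G$, hence $\tilde G$ is a $\Delta$-closed Zariski dense subgroup of $H(\calu)$. By Theorem \ref{Cassidy} (noting Remark \ref{rem-Cass} to justify that $H$ may be taken split), there exist an $\call_\Delta$-definable subfield $L=\calu^{\Delta'}$ and $g\in H(\calu)$ with $\tilde G^{g}=H(L)$. Replacing $G$ by $G^{g}$, I may assume $\tilde G=H(L)$. Since $\si$ commutes with every $\delta\in\Delta$ it preserves $\Delta'$-constants, so $\si(L)=L$ and $H(L)$ is $\si$-invariant, hence $\si$-$\Delta$-closed. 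By Remark \ref{rem-LS1}(c), $G$ has finite index in its $\si$-$\Delta$-closure $S\subseteq H(L)$, and $S$ is Kolchin dense in $H(L)$.

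Next I would analyze $S\subseteq H(L)$ using the induced difference-differential structure on $L$ (with $\si|_{L}$ and the residual derivations from $\Delta\setminus\Delta'$). This structure on $L$ is again a model of the appropriate $\dcfa$-type theory (ACFA when $\Delta'=\Delta$, with fewer derivations otherwise), so $S$ corresponds to a Zariski dense $\si$-$\Delta$-closed subgroup of $H(L)$ in a smaller theory. I would then run an SU-rank dichotomy patterned on Proposition 7.10 of \cite{CHP}: either the generic types of $S$ have maximal SU-rank, in which case $S=H(L)$ and we take $G_{0}=G$; or they are non-orthogonal to the fixed-field formula $\si^{\ell}(x)=x$, forcing $S\subseteq H(\fix(\si^{\ell})(L))$ for some $\ell\geq 1$. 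In either case a suitable finite-index $G_{0}\subseteq G$ has Kolchin closure $H(L)$, because $G$ was already Kolchin dense in $H(L)$.

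Finally, for the centerless case I would exploit that $\si$ restricts to a group automorphism of $H(L)$, and that every abstract automorphism of the Zariski dense subgroup $G_{0}^{g}$ of the centerless simple group $H(L)$ agrees with a unique algebraic automorphism $\varphi$ of $H(L)$ (an inner automorphism composed with a graph automorphism, rigidified via Fact \ref{simple}(b)). Since $G_{0}^{g}\subseteq H(\fix(\si^{\ell})(L))$, a suitable power $\si^{n}$ acts on $G_{0}^{g}$ exactly as $\varphi$, yielding the description $G_{0}^{g}=\{x\in H(L)\mid \si^{n}(x)=\varphi(x)\}$. The main obstacle I anticipate is the middle step: running the CHP-style SU-rank dichotomy cleanly in this mixed $\si$-$\Delta$ context, in particular checking that the induced structure on $L$ is well behaved enough for the non-orthogonality argument, and then upgrading the bare inclusion into $H(\fix(\si^{\ell})(L))$ to the sharper equalizer description in the centerless case.
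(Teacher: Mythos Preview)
Your opening reduction via Cassidy is fine, but the next sentence hides a real gap: you claim $\si(L)=L$ ``since $\si$ commutes with every $\delta\in\Delta$''. That would be correct if $\Delta'\subseteq\Delta$, but Cassidy's $\Delta'$ consists in general of $\calu$-linear combinations of the $\delta_i$, and $\si$ has no reason to commute with those; one only gets $\si(\calu^{\Delta'})=\calu^{\si(\Delta')}$. In the paper the equality $\si^n(L)=L$ is not an input but a \emph{conclusion}, extracted from $G$ itself. Consequently your plan to view $(L,\si|_L,\Delta\setminus\Delta')$ as a model of some smaller $\dcfa$ and import the CHP SU-rank dichotomy has no footing: without $\si(L)=L$ there is not even a difference field to work in, and even granting it, no result in the paper says this induced structure is existentially closed. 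You correctly identify this as the main obstacle; the paper simply does not go this route.

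What the paper does instead is work with the prolongations $G_{(n)}\leq H(L)\times\si(H(L))\times\cdots\times\si^n(H(L))$ of Remark~\ref{rem-LS1}(c). In the centerless case one takes the least $n$ with $G_{(n)}$ proper, shows (by a normal-subgroup argument) that $G_{(n)}$ is the graph of an isomorphism $\theta':H(L)\to H(\si^n(L))$, and applies Borel--Tits to write $\theta'=\bar\psi\circ\varphi$ with $\varphi$ an algebraic automorphism of $H$ and $\psi:L\to\si^n(L)$ a field isomorphism; an argument with linear differential operators (Remark~\ref{defga}) then forces $\psi=\mathrm{id}$, hence $\si^n(L)=L$ and $G_0=\{h\in H(L)\mid \si^n(h)=\varphi(h)\}$. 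Note that the logical order is the reverse of yours: the precise equalizer description comes \emph{first}, and only afterwards does one use $[\Aut(H):\mathrm{Inn}(H)]<\infty$ to find $r$ and $u\in H(L)$ with $u^{-1}G_0u\subseteq H(\fix(\si^{nr})\cap L)$. Your proposed ``upgrade'' from the inclusion $G_0^g\subseteq H(\fix(\si^\ell)(L))$ to the equalizer form has no mechanism --- indeed on that hypothesis $\si^\ell$ already acts trivially on $G_0^g$, so it cannot detect a nontrivial $\varphi$. The non-centerless case is then a short coda: pass to $H/Z(H)$, apply the above, and use that $Z(H)$ is finite to absorb the center into a higher power of $\si$.
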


\prf  Replacing $G$ by a subgroup of finite index, we may
assume that the  Kolchin closure $\bar G$ of $G$ is connected. Then
$\bar G$ is also Zariski dense in $H$, and by Theorem \ref{Cassidy},
$\bar G$ is conjugate to $H(L)$, for some field of constants $L\leq
\calu$. We may therefore assume that $G\leq H(L)$.\\[0.05in]
The strategy is the same as in the proof of Proposition 7.10 in
\cite{CHP}. Going to the $\si$-closure of $G$ within $H(L)$, and then to a subgroup
of finite index, we may assume that $G$ is quantifier-free definable,
and that it is connected for the $\si$-$\Delta$-topology. If $G=H(L)$, then we
are done. Assume  that $G\neq
H(L)$.  We will first do the case
when $H$ is centerless.\\[0.05in]
In the notation of Remark \ref{rem-LS1}(c), let $n$ be the smallest integer such
that the $\Delta$-algebraic group  $G_{(n)}$ is not equal to $H(L)\times \si(H(L))\times
\cdots\times \si^n (H(L))$ (Here we use that  $G$ is quantifier-free
definable and connected for the $\si$-$\Delta$-topology). If $\pi$ is the projection on the last factor
$\si^n(H(L))$, then $\pi(G_{(n)})=\si^n(H(L))$: this is because if $h$ is a
generic of $G$ for the $\si$-$\Delta$-topology,  then $h$, $\si^n(h)$ are
generics of $H(L)$,  $\si^n(H(L))$ respectively for the Kolchin
topology. As $G_{(n)}$ projects onto $G_{(n-1)}$, it follows that
$G_{(n)}\leq G_{(n-1)}\times \si^n(H(L))$ satisfy the hypotheses of
Lemma \ref{subdirect} and its Corollary \ref{subdirect-ss}. \\[0.05in]
Using Corollary \ref{subdirect-ss} and the fact that $H$ is defined over
$\rat$, it follows that  $\si^i(L)=L$ for some $i$, and that 
for some algebraic automorphism $\varphi$ of $H(L)$, the group 
$G_{(n)}/\prod_{0\leq j\leq n, j\neq i}\si^j(H(L))$ defines the graph of the
restriction of $\varphi$ to $H(L)$. By minimality of $n$,
we have $i=n$, $\si^n(L)=L$. \\
Thus, the group $G$ will be defined by the equation
$\si^n(g)=\varphi(g)$ within $H(L)$ (the left to right inclusion is
clear; equality comes from the fact that both  $G$ and $\{h\in H(L)\mid \si^n(h) = \varphi(h)\}$ are quantifier-free definable and connected for the
$\si$-$\Delta$-topology). 

\medskip\noindent
We now show, still with $H$ centerless, that if $G\neq H(L)$ is as
above, then $G\leq H(\fix(\si^\ell)(L))$ for
some $\ell$. 
By Proposition~14.9 of \cite{Bo}, the group
${\rm Inn}(H)$ of
inner automorphisms of $H(L)$ has finite index in the group 
$\Aut(H)$ of
algebraic automorphisms of $H(L)$. Moreover $\si^n$ induces a
permutation of $\Aut(H)/{\rm Inn}(H)$, and hence there are some $r\in\nat^*$ and
$h\in H(L)$ such
that $$\si^{n(r-1)}(\varphi)\circ\si^{n(r-2)}(\varphi)\circ \cdots \circ\varphi=\lambda_h,$$ where
  $\lambda_h$ is conjugation by $h$. 
 I.e., our group $G$ is contained in the subgroup $G'$ of $H(L)$ defined by
$\{g\in H(L)\mid \si^{nr}(g)=\lambda_h(g)\}$.\\
By the existential closedness of $\calu$, there is some $u\in H(L)$ such that $\si^{nr}(u)=h\inv
u$. So, if $g\in G'$, then
\begin{align*}
  \si^{nr}(u\inv gu)&=\si^{nr}(u\inv)\lambda_h(g)\si^{nr}(u)\\
  &= u\inv h (h\inv gh)(h\inv u)\\
  &=u\inv gu.
  \end{align*}
I.e., $u\inv G'u\leq H(\fix(\si^{nr})\cap L))$. \\[0.1in]
This does the case when $H$ is centerless. 
Assume now that  the center $Z$ of $H$ is non-trivial. By the first part we know
that there are $u\in H(\calu)$ and $\ell\geq 1$ such that $(u\inv
GZu)/Z\leq 
(H/Z)(\fix(\si^{\ell}(L)))$. Since $Z$ is finite and characteristic,
there is some $s\in\nat$ such that for all $a\in Z$, we have
$\prod_{i=0}^{s-1}\si^i(a)=1$. If $g\in u\inv
Gu$, then $\si^\ell(g)g\inv\in Z$; hence $\si^{\ell s}(g)g\inv=1$, and
$u\inv G u\leq H(\fix(\si^{\ell s}))$. \qed

\begin{cor} \label{MainTheorem}
Let $G$ be an infinite group  definable in 
$\calu$, and suppose that $G$ is definably quasi-simple. 
Then there are a simple algebraic group $H$ defined and split over
$\rat$, a definable subgroup $G_0$ of $G$
of finite index, and a definable group homomorphism $\phi:G_0\to H(\calu)$, with the following properties:
\begin{enumerate}
	\item $\Ker(\phi)$ is finite.
	\item  The Kolchin closure of $\phi(G_0)$ is $H(L)$ for some
          field $L$ of constants 
 in $\calu$.
\item Either $\phi(G_0)=H(L)$, or for some integer $\ell$,
$\phi(G_0)$ is a subgroup of $H(\fix(\si^\ell)\cap L)$. 
\end{enumerate}

\end{cor}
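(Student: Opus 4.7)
The plan is to combine Proposition~\ref{thm1} with Theorem~\ref{prop1}: the first locates $G$ (up to a finite-index subgroup and a finite kernel) inside $H(\calu)$ for some simple algebraic group $H$, and the second pins down where the image sits inside $H(\calu)$. Everything beyond that is bookkeeping.

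Concretely, I would first apply Proposition~\ref{thm1} to $G$ in the definably quasi-simple case. This produces a definable subgroup $G_1\le G$ of finite index, a simple algebraic group $H$ defined and split over $\rat$, and a definable homomorphism $\psi:G_1\to H(\calu)$ with finite kernel and Kolchin-dense image. Since the Kolchin topology refines the Zariski topology, the image $\psi(G_1)$ is \emph{a fortiori} Zariski dense in $H$, so that the hypothesis of Theorem~\ref{prop1} is satisfied.

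Next, I would apply Theorem~\ref{prop1} to the Zariski dense definable subgroup $\psi(G_1)\le H(\calu)$. This yields a definable subgroup $G_1'\le\psi(G_1)$ of finite index, an element $g\in H(\calu)$, and an $\call_\Delta$-definable subfield $L\subseteq\calu$, such that the Kolchin closure of $g\inv G_1'g$ equals $H(L)$, and moreover either $g\inv G_1'g=H(L)$, or $g\inv G_1'g\subseteq H(\fix(\si^\ell)\cap L)$ for some integer $\ell\ge 1$.

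Finally, I would set $G_0:=\psi\inv(G_1')\cap G_1$, which has finite index in $G_1$ and hence in $G$, and define $\phi:G_0\to H(\calu)$ by $\phi(x):=g\inv\psi(x)g$. Then $\phi$ is a definable homomorphism with finite kernel $\Ker(\psi)\cap G_0$, and image exactly $g\inv G_1'g$. The Kolchin closure of $\phi(G_0)$ is the conjugate by $g\inv$ of the Kolchin closure of $G_1'$, namely $H(L)$, yielding~(1) and~(2); the two alternatives in~(3) are then inherited verbatim from Theorem~\ref{prop1}. The only point requiring any care is the compatibility of taking Kolchin closures with conjugation by the fixed element~$g$, which is immediate because conjugation is a $\Delta$-algebraic homeomorphism of $H(\calu)$; I do not anticipate any substantive obstacle beyond this bookkeeping.
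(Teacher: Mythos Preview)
Your proposal is correct and follows essentially the same approach as the paper: apply Proposition~\ref{thm1} to obtain a finite-index subgroup mapping with finite kernel into $H(\calu)$ for $H$ simple, then apply Theorem~\ref{prop1} to the image. The paper's proof is a two-line sketch of exactly this; your version spells out the bookkeeping (pulling back the finite-index subgroup from Theorem~\ref{prop1} along $\psi$, composing with the conjugation by~$g$) that the paper leaves implicit.
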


\begin{proof}
By Proposition \ref{thm1} we can reduce to the case where $G$ is a
 definable subgroup of a simple algebraic group $H$.
Then apply Theorem  \ref{prop1} to conclude.    
\end{proof}

\begin{lem}\label{lem2} Let $H$ be a simple algebraic group, defined
  and split over $\rat$, let $L\leq \calu$ be a field of constants, and
  let $\varphi$ be an algebraic     
  automorphism of $H$. Let $\ell\geq 1$, and consider the subgroup
  $G\leq H(L)$ defined by $\si^\ell(g)=\varphi(g)$. Then $G$ is
  definably quasi-simple.
  \end{lem}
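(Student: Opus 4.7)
The plan is to verify the two conditions defining ``definably quasi-simple'': (i) $G$ has no abelian subgroup of finite index, and (ii) every infinite definable subgroup $K$ of $G$ of infinite index has $N_G(K)$ of infinite index. The tools are Zariski density of $G$ in $H$, the abstract simplicity of $H(L)$ furnished by Fact~\ref{simple}(a), and the classification result Theorem~\ref{prop1}.

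First I establish Zariski density. Since $L$ is algebraically closed (it is the constants field of a set of commuting derivations in characteristic~$0$), $H(L)$ is Zariski dense in $H$. By Theorem~\ref{LS1}(7), the reduct $\calu[\ell]=(\calu,+,\cdot,\Delta,\sigma^\ell)$ is again a model of $\dcfa$, and in this reduct the defining condition of $G$ is the difference equation $\sigma^\ell(g)=\varphi(g)$ inside $H(L)$; the $\dcfa$ axioms produce solutions in every non-empty Zariski open subset of $H(L)$, so $G$ is Zariski dense in $H$. Condition (i) is then immediate: an abelian subgroup of finite index in $G$ would have Zariski closure an abelian closed subgroup of $H$ of finite index, hence equal to $H$ by connectedness, contradicting that $H$ is simple and non-abelian.

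For (ii), let $K\leq G$ be an infinite definable subgroup with $N_G(K)$ of finite index in $G$. Then $N_G(K)$ is itself Zariski dense in $H$, so the Zariski closure $\bar K$ of $K$ is normalised by $H$, i.e.\ is a normal algebraic subgroup of $H$; simplicity forces $\bar K$ to be finite (impossible, as $K$ is infinite) or $\bar K=H$. Hence $K$ is Zariski dense in $H$, and it remains to show that any such $K$ has finite index in $G$. This is the main obstacle.

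To carry it out, I apply Theorem~\ref{prop1} to $K$: a finite-index subgroup $K_0$ of $K$ has Kolchin closure conjugate to $H(L_1)$ by some $h\in H(\calu)$, for an $\mathcal{L}_\Delta$-definable subfield $L_1\subseteq\calu$. Since $K_0\subseteq H(L)$ and $H(L)$ is Kolchin closed, $hH(L_1)h^{-1}\subseteq H(L)$; applying Cassidy's theorem (\ref{Cassidy}) inside $H(L)$ I may arrange $h\in H(L)$ and $L_1\subseteq L$, at the cost of replacing $\varphi$ by a conjugate algebraic automorphism $\varphi'$ which keeps the conjugate of $G$ of the same form. Now $K_0$ is a $\sigma$-closed Zariski dense subgroup of $H(L_1)$ satisfying the equation defining $G$, so running the argument of Theorem~\ref{prop1} inside $H(L_1)$ gives, up to a further finite-index reduction, a description $K_0=\{g\in H(L_1)\mid \sigma^{n_1}(g)=\psi(g)\}$ for some integer $n_1$ and algebraic automorphism $\psi$ of $H$. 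The hypothesis that $N_G(K_0)$ has finite index in $G$, combined with the normalizer calculation used at the end of the proof of Theorem~\ref{prop1} (in which Zariski density of $K_0$ in centerless $H$ forces a $Z(H)$-valued cocycle to vanish, the general case being reduced to the centerless one exactly as in that proof), identifies the equation defining $K_0$ with that of $G$ and forces $L_1=L$, so $K_0=G$. Therefore $K$ has finite index in $G$, as required.
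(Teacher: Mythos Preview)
Your overall plan is sound and mirrors the paper's: reduce to showing that any infinite definable $K\leq G$ normalised by a finite-index subgroup of $G$ must have finite index. But the final step of your argument is a genuine gap.

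You write that ``the normalizer calculation used at the end of the proof of Theorem~\ref{prop1} (in which Zariski density of $K_0$ in centerless $H$ forces a $Z(H)$-valued cocycle to vanish \ldots) identifies the equation defining $K_0$ with that of $G$ and forces $L_1=L$''. There is no such calculation in Theorem~\ref{prop1}: that proof contains no normaliser argument and no cocycle argument of the kind you describe. So the step where you conclude $n_1=\ell$, $\psi=\varphi$ and $L_1=L$ is simply not carried out. This is precisely the heart of the lemma, and it requires a concrete computation. The paper does it as follows (after reducing to $Z(H)=1$ via Lemma~\ref{lem-defss}): having obtained that the $\si$-$\Delta$-closure of $K$ is defined in $H(L)$ by $\si^r(g)=\psi(g)$ for some $r\leq\ell$, one takes a generic $(u,g)$ of $K\times G$; since $g^{-1}ug\in K$ one gets $\si^r(g^{-1}ug)=\psi(g^{-1}ug)$, hence $\si^r(g)\psi(g)^{-1}\in C_H(\psi(u))$. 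If $r<\ell$ then $\si^r(g)$, $\psi(g)$, $\psi(u)$ are independent generics of $H$, so $\si^r(g)\psi(g)^{-1}$ and $\psi(u)$ are independent generics of the non-commutative group $H$, a contradiction. This is the missing idea in your write-up.

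There is also a secondary inefficiency. You only extract Zariski density of $K$ in $H$ from the normaliser hypothesis, and are then forced into a detour through a possibly smaller constant field $L_1$ and an appeal to ``Cassidy inside $H(L)$'' that you do not justify. The paper avoids this entirely: using the prolongations $p_n$ of Remark~\ref{rem-LS1}(c), the finite-index normaliser hypothesis gives $K_{(0)}\unlhd G_{(0)}=H(L)$, and abstract simplicity of $H(L)$ (Fact~\ref{simple}(a)) then forces the Kolchin closure of $K$ to be all of $H(L)$ immediately. This gives $L_1=L$ for free and places the $\si$-$\Delta$-closure of $K$ directly inside $H(L)$, ready for the generic computation above.
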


 \prf By Lemma \ref{lem-defss}, we may assume that $Z(H)=(1)$. Note
 that, as $\varphi$ is defined over $\rat^{alg}$ and therefore over $L$,
 the existential closedness of $\calu$ gives that $G$ is Kolchin dense
 in $H(L)$, and in particular Zariski dense in $H$, so that no infinite abelian
 subgroup of $G$ can have finite index in $G$ (since its Zariski closure 
 would be abelian). Note also that as
 $H(L)$ is connected (for the Kolchin topology), the group $G$ is
 connected for the $\si$-$\Delta$-topology. \\[0.05in]
 Let $U$ be an infinite definable
  subgroup of $G$ of infinite index, and assume by way of contradiction
  that its normalizer $N$ has finite index in $G$. \\
Consider $p_{\ell}$ as defined in Remark \ref{rem-LS1}(c), and $U_{(\ell)}\leq
G_{(\ell)}$. Then $$U_{(\ell)}\unlhd 
N_{(\ell)}=G_{(\ell)}.$$ Indeed, we know that $p_\ell(U)$ and $p_\ell(N)$ are Kolchin
dense in $U_{(\ell)}$ and $N_{(\ell)}$ respectively, with $p_\ell(U)\unlhd p_\ell(N)$; hence any element
of $p_\ell(N)$ normalizes the Kolchin closure $U_{(\ell)}$ of $p_\ell(U)$, and since
$N_{G_{(\ell)}}(U_{(\ell)})$ is Kolchin closed we get $U_{(\ell)}\unlhd 
N_{(\ell)}$; that $N_{(\ell)}=G_{(\ell)}$ is  because
$[G:N]$ is finite and the  group $G_{(\ell)}$ is
connected for the Kolchin topology. In particular, $U_{(0)}\unlhd G_{(0)}=H(L)$,
and as the  group
  $H(L)$ is simple (by Fact \ref{simple}(a)), the Kolchin closure of $U$ must be
  $H(L)$. \\
Moreover,  as every generic of $U$ is a generic of its
$\si$-$\Delta$-closure $\tilde U$, it  follows
that $G$ normalizes $\tilde U$. So, we may replace $U$ by $\tilde U$; then $G$ also normalises the connected
component of $U$ (for the $\si$-$\Delta$-topology), and so we may
assume that $U$ is $\si$-$\Delta$-closed and connected. By Theorem \ref{prop1}, for some $r\leq \ell$
  and algebraic automorphism $\psi$ of $H(L)$, the group $U$ is defined
  within $H(L)$ by
  the equation $\si^r(g)=\psi(g)$. We will show that this is impossible
  unless $r=\ell$ (and $\psi=\varphi$). Indeed, suppose that $r<\ell$,
 take a generic $(u,g)$ of $U\times G$ over all parameters necessary to
 define $G$, $\varphi$, $U$ and $\psi$. 
Consider now
  $(u,\si^r(u))$, and $(g,\si^r(g))$. The elements $u$, $g$ and
  $\si^r(g)$ are independent generics of the algebraic group $H$. Then $$\si^r(g\inv ug)=
\si^r(g\inv)\psi(u)\si^r(g), \ \ \psi(g\inv
ug)=\psi(g\inv)\psi(u)\psi(g).$$ However, since
  $u\in U\unlhd G$, we have $\si(g\inv ug)=\psi(g\inv u g)$, i.e., $\si^r(g)\psi(g\inv)\in
  C_H(\psi(u))$. As $\psi$ is an automorphism of $H$,  the elements $\si^r(g)$, $\psi(g)$ and $\psi(u)$
  are independent generics of $H$;  this gives us the desired
  contradiction, as $\si^r(g)\psi(g)\inv$ and $\psi(u)$ are independent
  generics of the non-abelian algebraic group $H$. \qed

  \para The semi-simple case needs some additional lemmas. Indeed, Zariski
denseness, or even Kolchin denseness,  and the previous results do not suffice to give a complete
description. Here is a typical example: Let $H$ be a simple algebraic
group defined and split over $\rat$, and
consider the subgroup $G$ of $H(\calu)^2$ defined by $$G=\{(g_1,g_2)\in
H(\calu)^2\mid \si(g_1)=g_2\}.$$
Then $G$ is Kolchin dense in $H(\calu)^2$, however $G$ is isomorphic to
$H(\calu)$, via the projection on the first factor. We will now
prove several lemmas which will allow us to bypass this difficulty.

\begin{lem}\label{lem3} Let $G_1,\ldots,G_t$ be centerless $\Delta$-simple
$\Delta$-algebraic groups, with $G_i$ Zariski dense in some simple algebraic
group $H_i$, and $G\leq G_1\times \cdots\times G_t$ a
$\Delta$-algebraic connected (for the Kolchin topology) subgroup, which projects via the natural
projections onto each $G_i$. Then there are a set $\Psi\subset
\{1,\ldots,t\}^2$, and algebraic isomorphisms $\psi_{i,j}:H_i\to H_j$
whenever $(i,j)\in \Psi$, such that
$$G=\{(g_1,\ldots,g_t)\in \prod_{i=1}^tG_i\mid g_j=\psi_{i,j}(g_i),
(i,j)\in \Psi\}.$$
Moreover, if $(i,j), (j,k)\in \Psi$ with $k\neq i$, then $(j,i), (i,k)\in \Psi$,
$\psi_{j,i}=\psi_{i,j}\inv$, and 
$\psi_{i,k}=\psi_{j,k}\psi_{i,j}$.\\
Furthermore, $G$ has no subgroup of finite index.
\end{lem}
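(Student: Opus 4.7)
The plan is to proceed by induction on $t$, with the case $t=1$ trivial ($\Psi=\emptyset$). For the inductive step, I would let $G'=\pi(G)$, where $\pi$ is the projection to the first $t-1$ factors. The group $G'$ projects onto each $G_i$ for $i<t$, so the inductive hypothesis supplies $\Psi'\subset\{1,\ldots,t-1\}^2$ and algebraic isomorphisms $\psi'_{i,j}$ presenting $G'$ as a ``diagonal'' subgroup. Concretely, the relation on $\{1,\ldots,t-1\}$ generated by $\Psi'$ (together with the diagonal) partitions the indices into equivalence classes $C_1,\ldots,C_s$; choosing representatives $i_k\in C_k$, the projection $G'\to\prod_k G_{i_k}$ is an isomorphism onto a direct product of simple centerless groups.

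I would then apply Goursat's lemma to $G\leq G'\times G_t$. Let $N:=\{y\in G_t\mid (1,y)\in G\}$, a normal subgroup of $G_t$; by simplicity, $N$ is either trivial or all of $G_t$. If $N=G_t$, then $G=G'\times G_t$ and taking $\Psi=\Psi'$ finishes the argument. Otherwise $G$ is the graph of a surjective homomorphism $\theta:G'\to G_t$ whose kernel is a maximal normal subgroup of $G'$; by the product decomposition of $G'$ and the simplicity of each $G_{i_k}$, this kernel must equal $\prod_{k\neq k_0}G_{i_k}$ for some $k_0$, so $\theta$ descends to a group isomorphism $\bar\theta:G_{i_{k_0}}\to G_t$.

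The main technical step is to upgrade $\bar\theta$ to an algebraic isomorphism $\psi_{i_{k_0},t}:H_{i_{k_0}}\to H_t$. Taking the Zariski closure $\Gamma$ of the graph of $\bar\theta$ in $H_{i_{k_0}}\times H_t$ yields a closed subgroup projecting onto each factor by Zariski density. Its left kernel $\Gamma\cap(\{1\}\times H_t)$ is normalised by $\bar\theta(G_{i_{k_0}})=G_t$, hence by all of $H_t$ again by Zariski density; simplicity then forces this kernel to be finite central, and a symmetric argument on the other factor shows that $\Gamma$ is the graph of an algebraic isogeny. The centerless hypothesis, transferred from $G_i$ to $H_i$ as in the proof of Theorem~\ref{prop1}, upgrades this isogeny to an isomorphism. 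Alternatively one may invoke Borel--Tits and argue, as in Theorem~\ref{prop1}, that the accompanying field automorphism must be the identity because the graph of $\bar\theta$ is $\Delta$-definable. This promotion from an abstract to an algebraic isomorphism is where I expect the main subtlety.

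To finish, I would set $\psi_{j,t}:=\psi_{i_{k_0},t}\circ(\psi'_{i_{k_0},j})\inv$ for each $j\in C_{k_0}$, define $\Psi:=\Psi'\cup\{(j,t),(t,j)\mid j\in C_{k_0}\}$ with $\psi_{t,j}:=\psi_{j,t}\inv$, and verify the cocycle compatibilities of the ``moreover'' clause directly from the construction. The identities among the $\psi'_{i,j}$ with $i,j<t$ come from the inductive hypothesis, and the new identities involving $t$ are immediate from the definitions of $\psi_{j,t}$ and $\psi_{t,j}$.
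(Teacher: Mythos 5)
Your proposal is essentially the paper's argument in a different order: the paper's opening ``remark'' is exactly the Goursat-type analysis you describe (a proper $\Delta$-definable subgroup projecting onto the product of the remaining simple centerless factors must be cut out by a single condition $g_j=\psi_{i,j}(g_i)$, extracted from paragraphs 4--6 and the Claim in the proof of Theorem \ref{prop1}); the paper then assembles $\Psi$ from all two-factor projections and proves the reverse inclusion by projecting onto a maximal set of pairwise unlinked indices, which is your induction repackaged. One caveat on what you rightly single out as the main subtlety. Your first route --- taking the Zariski closure $\Gamma$ of the graph of $\bar\theta$ in $H_{i_{k_0}}\times H_t$ --- is incomplete as written: simplicity of $H_t$ only forces $\Gamma\cap(\{1\}\times H_t)$ to be finite central \emph{or all of} $H_t$, and the second case, i.e.\ $\Gamma=H_{i_{k_0}}\times H_t$, is precisely what occurs for the graph of a wild abstract isomorphism (e.g.\ one induced by a non-continuous field automorphism), so it cannot be excluded by Zariski-closure considerations alone; one must use that the graph is $\Delta$-definable. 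Your fallback is the correct argument and is what the paper does: Borel--Tits writes $\bar\theta$ as an algebraic isomorphism composed with the map induced by a field isomorphism $\psi$, and the $\Delta$-definability of the graph forces $\psi=\mathrm{id}$ by the linear-differential-operator argument of the Claim in the proof of Theorem \ref{prop1}. With that route taken, the rest of your construction (normal subgroups of a product of centerless $\Delta$-simple groups are sub-products, and the cocycle bookkeeping for the new pairs $(j,t)$) goes through.
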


\prf By Corollary \ref{subdirect-ss}, if $G$ projects onto
$\prod_{i=2}^t G_i$ and onto $G_1$, but $G\neq \prod_{i=1}^t G_i$, then there is some
index $i\geq 2$, and an 
isomorphism $\psi_{1,i}:G_1\to G_i$ such that $$G=\{(g_1,\ldots,g_t)\in \prod
G_i\mid g_i=\psi_{1,i}(g_1)\}.$$

\noindent 
We define $\Psi$ be the set of pairs $(i,j)\in \{1,\ldots,t\}^2$ such
that the image $G_{i,j}$  of $G$ under the natural projection $\prod_{\ell=1}^t H_\ell\to
H_i\times H_j$ is a proper subgroup of 
$G_i\times G_j$. By  Corollary \ref{subdirect-ss} again, if $(i,j)\in \Psi$, then $G_{i,j}$ is
the graph of an algebraic isomorphism $G_i\to G_j$ (i.e., the
restriction to $G_i$ of an algebraic isomorphism $H_i\to H_j$). Then the set $(\Psi, \psi_{i,j})$
satisfies 
the moreover part of the conclusion,  and we have 
$$G\leq \{(g_1,\ldots,g_t)\in \prod_{i=1}^tG_i\mid g_j=\psi_{i,j}(g_i),
(i,j)\in \Psi\}.$$
To prove equality, we let $T\subset \{1,\ldots,t\}$ be maximal such that
whenever $i,j\in T$, then $(i,j)\notin\Psi$; then the natural projection $\prod_{\ell=1}^t
H_\ell\to \prod_{\ell\in T}H_\ell$ defines an injection on $G$, and
sends $G$ to a subgroup $G'$ of $\prod_{\ell\in T}G_\ell$, with the property
that whenever $k\neq \ell\in T$, then $G'$ projects onto $G_k\times
G_\ell$. By the first case and an easy induction, this implies that $G'=
\prod_{\ell\in T}G_\ell$.
The last assertion follows from it being true  for each
$G_i$ by Theorem \ref{Cassidy} and Fact \ref{simple}.  \qed 

\begin{lem} \label{lem4} Let $H_1,\ldots,H_r$ be simple centerless
  algebraic groups defined and split over $\rat$, $L_1,\ldots,L_r$
  $\Delta$-closed  subfields
  of $\calu$, and $G\leq \prod_{i=1}^rH_i(L_i)$  a Kolchin dense 
  quantifier-free  definable subgroup, which is connected
  for the $\si$-$\Delta$-topology. Let $\tilde G_i\leq H_i(L_i)$ be the
  $\si$-$\Delta$-closure of the projection of $G$ on the $i$-th factor
  $H_i(L_i)$.\\ 
Then there is a partition of
  $\{1,\ldots,r\}$ into subsets $I_1,\ldots,I_s$, such that for each
  $1\leq k\leq s$,  the following holds:\\
 If $i\neq j\in I_k$, then there are  an integer
  $n_{ij}\in\zee$ and an algebraic 
 isomorphism $\theta_{ij}:H_i(L_i)\to H_j(\si^{n_{ij}}(L_j))$ such that if $\pi_{I_k}$ is the projection $\prod_{j=1}^r
H_j(L_j)\to \prod_{j\in I_k}H_j(L_j)$, and $i\in I_k$ is fixed,
then $$\pi_{I_k}(G)=\{(g_j)_{j\in I_k}\in \prod_{j\in I_k}H_j(L_j)\mid \theta_{ij}(g_i)=
\si^{n_{ij}}(g_j) \hbox{ if }j\neq i\}. $$
Moreover, $G\simeq \prod_{k=1}^s\pi_{I_k}(G)$, and $G$ projects onto
each $\tilde G_i$.
\end{lem}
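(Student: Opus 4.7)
The plan is to reduce to Lemma~\ref{lem3} by taking prolongations. Because $G$ is quantifier-free definable and $\si$-$\Delta$-connected, Remark~\ref{rem-LS1}(c) says that for each $n$ the Kolchin closure $G_{(n)}$ of the prolongation $p_n(G)$ inside $\prod_{i=1}^r\prod_{j=0}^n H_i(\si^j(L_i))$ determines $G$ completely (indeed $G$ is the pullback of $G_{(n)}$ for all $n$, and the sequence stabilises for $n$ large enough in a suitable sense).  Each factor $H_i(\si^j(L_i))$ is a simple centerless $\Delta$-algebraic group Zariski dense in $H_i$, and since $G$ projects Kolchin densely onto each $H_i(L_i)$, $G_{(n)}$ projects onto each $H_i(\si^j(L_i))$.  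So Lemma~\ref{lem3} applies: $G_{(n)}$ is cut out by a family of equations $\psi(p)=q$ where $p$ is a coordinate from $H_i(\si^j(L_i))$, $q$ is a coordinate from $H_{i'}(\si^{j'}(L_{i'}))$, and $\psi:H_i\to H_{i'}$ is an algebraic isomorphism.

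Next I would translate these coordinate-level equations back into $\si$-$\Delta$-equations on $G$. A relation that, on the $p_n$-image, reads $\psi(\si^j(g_i))=\si^{j'}(g_{i'})$ is, because $H_i,H_{i'}$ are defined over $\rat$ so $\si$ commutes with algebraic morphisms, the same as $\si^j(\psi(g_i))=\si^{j'}(g_{i'})$; applying $\si^{-\min(j,j')}$ gives an equation of the required shape $\theta_{ii'}(g_i)=\si^{n_{ii'}}(g_{i'})$.  Define an equivalence relation $\sim$ on $\{1,\dots,r\}$ by $i\sim i'$ iff such a relation exists for some $n$ (equivalently, iff the image of $G$ in $H_i(L_i)\times H_{i'}(L_{i'})$ is a proper subgroup of its Zariski closure in $H_i\times H_{i'}$).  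The ``moreover'' clause of Lemma~\ref{lem3} (compatibility of the algebraic isomorphisms under composition and inversion) transfers to $\theta_{ij}$ and guarantees that within each equivalence class $I_k$, after fixing any $i\in I_k$, all the $\theta_{ij}$ ($j\in I_k\setminus\{i\}$) and integers $n_{ij}$ can be chosen coherently, and that $\pi_{I_k}(G)$ has exactly the stated form.

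For the splitting $G\simeq\prod_{k=1}^s\pi_{I_k}(G)$, the key point is that between distinct classes $I_k$ and $I_{k'}$ no coordinate-level equation survives at any prolongation level: for every $n$ the image of $G_{(n)}$ in $\prod_{j\in I_k,\,\ell\le n}H_j(\si^\ell(L_j))\times\prod_{j\in I_{k'},\,\ell\le n}H_j(\si^\ell(L_j))$ is the full product, by construction of $\sim$ and Lemma~\ref{lem3}. Taking the inverse limit and using connectedness of $G$ for the $\si$-$\Delta$-topology yields the product decomposition. Finally, the assertion that $G$ projects \emph{onto} $\tilde G_i$ (and not merely into it) follows from the explicit description: once $g_i$ is fixed, each $g_j$ ($j\in I_k$) is uniquely determined by the equations $\theta_{ij}(g_i)=\si^{n_{ij}}(g_j)$, so the map $g\mapsto g_i$ has a quantifier-free definable section from $\tilde G_i$ back to $\pi_{I_k}(G)$, showing the projection is surjective onto the $\si$-$\Delta$-closure of its image.

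The main obstacle I expect is the bookkeeping in the second paragraph: carefully normalising the pairs $((i,j),(i',j'))$ produced by Lemma~\ref{lem3} so that one obtains, for each $I_k$ and each fixed base index $i\in I_k$, a single canonical pair $(\theta_{ij},n_{ij})$ for every other $j\in I_k$, and verifying the cocycle-style relations $\theta_{ik}=\si^{n_{ij}-n_{ik}+n_{jk}}(\theta_{jk})\circ\theta_{ij}$ (or the analogue adapted to the signs of the $n_{\bullet\bullet}$) that are needed to guarantee consistency when $n_{ij}$ and $n_{jk}$ have different signs. This is straightforward given the ``moreover'' part of Lemma~\ref{lem3} but requires care because $\si$ permutes the coordinates and one must choose a canonical representative of each orbit.
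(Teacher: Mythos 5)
Your proposal follows essentially the same route as the paper's proof: pass to a sufficiently long prolongation $G_{(N)}$, apply Lemma~\ref{lem3} to the factors $H_i(\si^k(L_i))$, normalise the resulting pairs by applying negative powers of $\si$, and read off the partition $I_1,\ldots,I_s$ as the connected components of the induced graph on $\{1,\ldots,r\}$. One small correction: $\si$ does \emph{not} commute with the isomorphisms $\psi$ produced by Lemma~\ref{lem3} (their coefficients need not be fixed by $\si$, even though the groups $H_i$ are defined over $\rat$); the correct normalisation, as in the paper, applies $\si^{-\ell}$ to the whole equation and replaces $\psi$ by the coefficient-twisted isomorphism $\psi^{\si^{-\ell}}$ --- which does not affect your conclusion, since only the existence of \emph{some} algebraic isomorphism $\theta_{ij}$ is asserted.
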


\prf 
We use the
prolongations $p_n$ defined in \ref{rem-LS1}, and choose $N$ large
enough so that $G=\{\bar g\in \prod_{i=1}^r H_i(L_i)\mid p_N(\bar g)\in G_{(N)}\}$. Then $G_{(N)}$
is a $\Delta$-algebraic subgroup of $$\prod_{i=1}^r (\tilde G_{i})_{(N)}\leq
\prod_{1\leq i\leq r, 0\leq k\leq N}H_i(\si^k(L_i)). $$
Let $\Psi\subset (\{1,\ldots,r\}\times \{0,\ldots,N\})^2$ be the set of pairs
given by Lemma \ref{lem3}, and $\psi_{(i,k),(j,\ell)}$, ${((i,k),(j,\ell))\in \Psi}$, the
corresponding set of algebraic isomorphisms $$\psi_{(i,k),(j,\ell)}:
H_i(\si^k(L_i))\to H_j(\si^\ell(L_j)).$$ So, if $(g_1,\ldots,g_r)\in G$, then
$$\psi_{(i,k),(j,\ell)}(\si^k(g_i))=\si^\ell(g_j).\eqno{(1)}$$
Note the following, whenever $((i,k),(j,\ell))\in\Psi$:
\begin{itemize}
\item If $k+1,\ell+1\leq N$, then $((i,k+1),(j,\ell+1))\in \Psi$, with
$\psi_{(i,k+1),(j,\ell+1) }={\psi_{(i,k),(j,\ell) }}^\si$ (here,
${\psi_{(i,k),(j,\ell) }}^\si$ denotes the isomorphism obtained by
applying $\si$ to the coefficients of the isomorphism
${\psi_{(i,k),(j,\ell) }})$; 
\item If $k,\ell\geq 1$, then $((i,k-1),(j,\ell-1))\in \Psi$, with
$\psi_{(i,k-1),(j,\ell-1) }={\psi_{(i,k),(j,\ell) }}^{\si\inv}$;
\item If  $k\leq \ell$, then
applying $\si^{-k}$ to  equation (1)   gives  $$((i,0),(j,\ell-k))\in
\Psi, \ \hbox{and}\ \psi_{(i,k),(j,\ell)}= {\psi_{(i,0),(j,\ell-k)}}^{\si^{k}}.$$
\item Finally, if $i=j$ and  $k<\ell$, then $\tilde G_i$ is defined by an equation
$\si^{n_i}(g)=\varphi_i(g)$ within $H_i(L_i)$ for some integer $n_i$ and
algebraic automorphism $
\varphi_i$ of $H(L)$, $((i,0),(i,n_i))\in \Psi$ with
associated isomorphism $\psi_{(i,0),(i,n_i)}=\varphi_i$, and $\ell-k$ is a 
multiple of the integer $n_i$. 
This is because  $G$ projects
onto a subgroup of finite index of $\tilde G_i$, and therefore the
$\Delta$-algebraic group $G_{(N)}$ 
projects onto the $\Delta$-algebraic group $(\tilde G_{i})_{(N)}$. However, from Fact \ref{simple}
and the definition of $\tilde G_i$ within $H_i(L)$, we deduce that $\tilde G_i$ has no
subgroup of finite index, and therefore that $G$ projects onto each $\tilde
G_i$. 
\end{itemize}
By Lemma \ref{lem3}, we know that the set $\Psi$ and the
 $\psi_{i,j}$ completely determine the $\tilde G_i$, and by the above observations,
 each condition  $\si^k(g_i)=\psi_{(i,k),(j,\ell)}(\si^\ell(g_j))$ is
 implied  by
 $$\si^{k-\ell}(g_i)={\psi_{(i,k),(j,\ell)}}^{\si^{-\ell}}(g_j).\eqno{(2)}$$  
The set $\Psi$ defines a structure of graph on $\{1,\ldots,r\}\times
\{0,\ldots,N\}$, which in turn induces a graph structure on
$\{1,\ldots,r\}$, by
$E(i,j)$ iff there are some $k,\ell$ such that $((i,k),(j,\ell))\in
\Psi$. If $E(i,j)$, then the isomorphism $\tilde G_i\to \tilde G_j$ is
given by equation (2). Then $(\{1,\ldots,r\},E)$ has finitely many
connected components, say $I_1,\ldots,I_s$, and for every $r$, if $i\in
I_r$, then $I_r=
\{i\}\cup \{j\mid E(i,j)\}$. 
Lemma \ref{lem3} shows that $G=\prod_{r=1}^s \pi_{I_r}(G)$, and gives
the desired description of $\pi_{I_r}(G)$, with
$\theta_{i,j}={\psi_{(i,k),(j,\ell)}}^{\si^{-\ell}}$ and
$n_{i,j}=k-\ell$, if $((i,k),(j,\ell))\in\Psi$ and $i\in I_r$. \qed

\begin{thm}\label{prop2semi} 
Let $G$ be a definable subgroup of $H(\calu)$, where $H$ is a
  semi-simple algebraic group defined and split over $\rat$, and with
  trivial center. 
   Assume that $G$ is Zariski dense in $H$. 
\begin{enumerate}
\item Assume that 
  the $\si$-$\Delta$-closure of $G$ is 
  connected for the $\si$-$\Delta$-topology. Then there are $s$ and
  simple normal algebraic subgroups $H_1,\ldots,H_s$ of $H$, a
  projection $\pi:H\to H_1\times \cdots \times H_s$ which restricts to an
  injective map on $G$,   fields of constants $L_i$ 
  in $\calu$, definable subgroups $G_i$ and
  $G'_i$ of $H_i(L_i)$ for $1\leq i\leq s$, and
  $h\in \pi(H)(\calu)$, such
  that $$ G_1\times \ldots \times G_s\leq h\inv \pi(G)h \leq G'_1\times
  \cdots \times G'_s,$$
  and each $G_i$ is a 
  normal subgroup of finite index of $G'_i$. 
\item  Assumptions as in (1). If  in addition $G$ is $\si$-$\Delta$-closed, then
  $h\inv \pi(G)h=G_1\times \cdots \times G_s$, and for each $i$, either $G_i=H_i(L_i)$, or for some
  integer $\ell_i$ and automorphism $\varphi_i$ of $H_i(L_i)$, $G_i$ is
  defined within $H_i(L_i)$ by $\si^{\ell_i}(g)=\varphi_i(g)$.

\end{enumerate}

\end{thm}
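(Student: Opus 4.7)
The plan is to decompose $H$ into its simple factors, use Theorems \ref{thm15} and \ref{Cassidy} to straighten out the Kolchin closure of $G$, read off the combinatorial structure of its $\si$-$\Delta$-closure via Lemma \ref{lem4}, and finally apply a normal-core construction to extract the subgroups $G_1,\ldots,G_s$. Since $H$ is semi-simple and centerless, write $H=A_1\times\cdots\times A_t$ with the $A_i$ simple and centerless, each defined and split over $\rat$. Let $\bar G$ and $\tilde G$ be the Kolchin and $\si$-$\Delta$-closures of $G$; then $\tilde G\subseteq\bar G$ and $\tilde G$ is Kolchin-dense in $\bar G$. Since Kolchin-closed sets are $\si$-$\Delta$-closed, any proper Kolchin-closed partition of $\bar G$ would restrict to a proper $\si$-$\Delta$-closed partition of $\tilde G$, contradicting the hypothesis of $\si$-$\Delta$-connectedness; hence $\bar G$ is Kolchin-connected. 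Theorem \ref{thm15}, combined with triviality of the kernel of the product $\Delta$-isogeny (each $A_i$ being centerless), yields $\bar G=\bar G_1\times\cdots\times\bar G_t$ with $\bar G_i$ Zariski dense in $A_i$. Theorem \ref{Cassidy} supplies $\call_\Delta$-definable subfields $L_i\subseteq\calu$ and elements $g_i\in A_i(\calu)$ with $g_i\inv\bar G_ig_i=A_i(L_i)$; setting $g=(g_1,\ldots,g_t)\in H(\calu)$ and replacing $G$ by $g\inv Gg$, we may assume $\bar G=A_1(L_1)\times\cdots\times A_t(L_t)$.

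Lemma \ref{lem4} now applies to the (new) $\tilde G$ and produces a partition $\{1,\ldots,t\}=I_1\sqcup\cdots\sqcup I_s$ together with algebraic isomorphisms expressing $\tilde G=\prod_{k=1}^s\pi_{I_k}(\tilde G)$, each $\pi_{I_k}(\tilde G)$ being the graph of isomorphisms over a chosen coordinate $i_k\in I_k$. Let $\pi'\colon\prod_jA_j\to\prod_kA_{i_k}$ be the projection onto these representatives: it is injective on $\tilde G$, hence on $G$, and $\pi'(\tilde G)=\tilde G_{i_1}\times\cdots\times\tilde G_{i_s}$, where $\tilde G_{i_k}$ is the $\si$-$\Delta$-closure of the $i_k$-th projection of $G$. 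Set $H_k=A_{i_k}$, $L_k=L_{i_k}$, $G'_k=\tilde G_{i_k}\leq H_k(L_k)$, $\pi$ the projection $H\to H_1\times\cdots\times H_s$, and $h=\pi(g)\in\pi(H)(\calu)$; then $h\inv\pi(G)h=\pi'(g\inv Gg)\leq G'_1\times\cdots\times G'_s$, and by Remark \ref{rem-LS1}(c) the containment has finite index. To complete (1), let $A^B$ be the normal core of $A=\pi'(g\inv Gg)$ inside $B=G'_1\times\cdots\times G'_s$: a finite intersection of conjugates of $A$, hence a definable, normal, finite-index subgroup of $B$ contained in $A$. Setting $G_k=A^B\cap G'_k$ produces a definable normal subgroup of $G'_k$ of finite index, and $G_1\times\cdots\times G_s\leq A^B\leq h\inv\pi(G)h$, as required.

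For part (2), if $G$ is $\si$-$\Delta$-closed then $G=\tilde G$ even after conjugation, so $h\inv\pi(G)h=\pi'(G)=G'_1\times\cdots\times G'_s$, giving the announced equality. Each $G'_k$ is $\si$-$\Delta$-closed, $\si$-$\Delta$-connected (as the $\si$-$\Delta$-closure of the projection of a connected set), and Zariski dense in $H_k$ with Kolchin closure $H_k(L_k)$. The internal argument of Theorem \ref{prop1} then applies directly to $G'_k\subseteq H_k(L_k)$ and gives the dichotomy: either $G'_k=H_k(L_k)$, or $G'_k=\{g\in H_k(L_k)\mid\si^{\ell_k}(g)=\varphi_k(g)\}$ for some integer $\ell_k\geq 1$ and algebraic automorphism $\varphi_k$ of $H_k(L_k)$. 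The main technical obstacles I foresee are (i) deducing Kolchin-connectedness of $\bar G$ from $\si$-$\Delta$-connectedness of $\tilde G$, and (ii) coordinating the conjugation by $g$, the projection $\pi$, and the combinatorics of Lemma \ref{lem4} so that each $G'_k$ lies in a single factor $H_k(L_k)$ and $G_k$ remains normal of finite index in $G'_k$, which is precisely what the normal-core intersection accomplishes.
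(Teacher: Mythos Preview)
Your proof is correct and follows the same overall architecture as the paper's: decompose $H$ into simple factors, apply Theorems \ref{thm15} and \ref{Cassidy} to conjugate the Kolchin closure of $G$ into $\prod_i H_i(L_i)$, then invoke Lemma \ref{lem4} on the $\si$-$\Delta$-closure to select a set of representative coordinates onto which the projection is injective, and finally read off part (2) from Theorem \ref{prop1}.

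The one genuine point of departure is how you produce the lower bound $G_1\times\cdots\times G_s$. The paper sets $G'_i=\pi_i(G)$ and $G_i=G\cap H_i(L_i)$ directly, observes that $G_i\unlhd G'_i$ because $G_i\times(1)$ is the kernel of the projection away from the $i$-th factor, and then appeals to the definable quasi-simplicity of $G'_i$ (established via Lemma \ref{lem2}) to force $[G'_i:G_i]<\infty$. You instead take $G'_k$ to be the $\si$-$\Delta$-closure of the projection, note that $\pi'(G)$ has finite index in $\prod_k G'_k$ by Remark \ref{rem-LS1}(c), and extract $G_k$ from the normal core. This is a clean, purely group-theoretic shortcut that sidesteps Lemma \ref{lem2} entirely; the price is that your $G_i$ and $G'_i$ are less explicit than the paper's (which are literally $G\cap H_i(L_i)$ and $\pi_i(G)$). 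Both routes are valid for the statement as written.

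Two minor remarks: your argument for Kolchin-connectedness of $\bar G$ is fine but actually redundant, since Theorem \ref{thm15} already forces $\bar G$ to be the image of a connected product under an isogeny; and in part (2) you should note that once $G=\tilde G$ the normal core is all of $B$, so that indeed $G_k=G'_k$ and the displayed equality is with the $G_k$'s as stated.
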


\prf 
By Theorem \ref{thm15}, if $H_1,\ldots,H_r$ are the simple
algebraic 
components of $H$, and $\bar G$ is the Kolchin closure of $G$, then
$\bar G$ is $\Delta$-semi-simple; if $\bar G_i$ is the connected (for the
$\Delta$-topology) component of
$\bar G\cap
H_i(\calu)$, then the morphism $\rho:\bar G_1\times \cdots\times \bar G_r\to
\bar G$ is an isogeny, and because $H$ is centerless, is an
isomorphism. 

\noindent
By Theorem \ref{Cassidy}, we know that there are
$\Delta$-definable
subfields $L_i$ of $\calu$, such that each $\bar G_i$ is conjugate to
$H_i(L_i)$ within $H_i(\calu)$. But as $[H_i,H_j]=1$ for $i\neq j$,
there is $h\in H(\calu)$ such that $h\inv \bar G_i h\leq
H_i(L_i)$ for all $i$. We will replace $G$ by $h\inv Gh$, so that 
$\bar G_i= H_i(L_i)$ for every $i$. \\[0.05in]
(1) 
For each $i$, consider the projection  $\pi_i$ on the $i$-th
factor $H_i(L_i)$, and  let $G'_i=\pi_i(G)$. Further,  let $G_i=H_i(L_i)\cap G$. So, $G_1\times
\cdots\times G_r$ is a subgroup of $G$. 

\medskip\noindent
{\bf Claim 1}.  $G'_i$ is Kolchin dense in $H_i(L_i)$, for
$i=1,\ldots,r$.

\prf 
Since $G$ is Kolchin dense in $\bar{G}$,   any generic $g:= (g_1,
\ldots, g_r)$ of $G$ is a generic of the $\Delta$-algebraic group $\bar G$.
Then  $g_i$ is a generic of $H_i(L_i)$ for all $i$, and the claim is
proved. \qed

  \medskip\noindent
 {\bf Claim 2}. For all $i \in \{1, \ldots, r\}$,  $G_i  \unlhd G_i'$.

\prf Let $q:H\to H_2\times \cdots \times H_r$ be the
  projection on the last $r-1$ factors.  Then  $G\cap \Ker(q)$ is
  normal in $G$, contained in $H_1(L_1)\times (1)^{r-1}$, and equals
  $G_1\times (1)^{r-1}$. As $G$
  projects onto $G'_1$, we get $G_1\unlhd G'_1$. The proof for the other indices is similar. \qed

  \medskip\noindent
  {\bf Claim 3}. If $G_i\neq (1)$, then $[G'_i:G_i]<\infty$. If moreover $G$ is
  quantifier-free definable, then $G_i=G'_i$. 

 \prf 
Both $G_i$ and $G'_i$ are definable subgroups of the simple
$\Delta$-algebraic group $H_i(L_i)$ and $G'_i$ is Kolchin dense in
$H_i(L_i)$. 

\noindent
If
$G'_i=H_i(L_i)$, then $G_i=G'_i$ since $H_i(L_i)$ is a simple (abstract)
group (by \ref{simple}, and because $Z(H)=(1)$). If $G'_i\neq H_i(L_i)$,
then by Theorem \ref{prop1}, Claim~1 and Lemma \ref{lem2},
$G'_i$ is definably quasi-simple. Moreover, since $G'_i$ is Zariski
dense in the centerless algebraic group $H_i$ (by Claim 1), it follows
that $G'_i$ has no finite normal subgroup, and so that any definable
normal subgroup of $G'_i$ must have finite index in $G_i$. Together with
Claim 2, this gives the 
result when $G$ is definable.\\
If $G$ is quantifier-free definable, so is every $G_i$, and therefore $G_i$
is closed in the $\si$-$\Delta$-topology. This implies that
$G_i=G'_i$, because $G$, and therefore also $G'_i$, is connected for the $\si$-$\Delta$-topology. \qed

\medskip\noindent 
If all $G_i$ are non-trivial,  we have shown that our group $G$ is squeezed between $G_1\times
\cdots \times G_r$ and $G'_1\times \cdots \times G'_r$. And that if $G$
is quantifier-free definable, then $G=\prod_{i=1}^r G_i$.\\[0.05in]
Assume now that some $G_i$ are trivial. If $\tilde G$ denotes the
$\si$-$\Delta$-closure of $G$, and $\tilde G_i$ the
$\si$-$\Delta$-closure of $G'_i$, then these groups are connected for
the $\si$-$\Delta$-topology, quantifier-free definable, and $\tilde G$
is a proper subgroup of $\prod_{i=1}^r \tilde G_i$. Hence Lemma
\ref{lem4} applies, and gives a subset $T$ of $\{1,\ldots,r\}$ such that
the natural projection $\pi_T$ defines an isomorphism $\tilde G\to
\prod_{i\in T}\tilde G_i$, which restricts to an embedding $G\to
\prod_{i\in T}G'_i$ with $[\prod_{i\in
  T}G'_i:\pi_T(G)]<\infty$. Moreover, applying Claim 3 to $G''_i:=\pi_T(G)\cap
H_i(L_i)$, $i\in T$, we get
$$\prod_{i\in T}G''_i\leq \pi_T(G)\leq \prod_{i\in T}G'_i,$$
with $G''_i$ a normal subgroup of $G'_i$ of finite index. This finishes
the proof of (1) (modulo a change of notation). \\
We showed that $\pi_T(\tilde G)=\prod_{i\in T}\tilde G_i$, which,
together with Theorem \ref{prop1}, proves (2). \qed

\begin{rem} In the case of $Z(H)\neq (1)$, we can obtain a
  similar result in a particular case: let $H_i(L_i)$ are
  the subgroups of $\bar G$ given by Theorem 
  \ref{thm15}, and  define $G_i=G\cap H_i(L_i)$ as above. Then if
  all $G_i$ are infinite or trivial, the same proof gives some 
  subset $T$ of $\{1,\ldots,r\}$, and an isogeny   $\prod_{i\in T}G_i$ onto a
  subgroup of finite index of $G$. \\
  In the general case, however, we can only obtain such a representation
  of a proper quotient of $G$: the problem arises from the fact that the
  groups $G_i$ may be finite non-trivial, so that the projection $\pi_T$ defined in
  the proof will restrict to an isogeny on $G$. So, we might as well
  work with the image of $G$ in $H/Z(H)$.

  \end{rem}

\section{Definable subgroups of finite index}

We work in  a sufficiently saturated model $(\calu, \si, \Delta)$ of
$\dcfa$. Unless
otherwise mentioned, definable will mean
$\call_{\si,\Delta}$-definable with parameters.\\
The aim of this section is to show that a definably quasi-simple group
definable in $\calu$ has a definable connected component. To do that, we
investigate definable subgroups of algebraic groups which are not
quantifier-free definable, and obtain a description similar to the one
obtained by Hrushovski and Pillay in Proposition 3.3 of \cite{HP2}.

  \begin{thm} \label{sbgp}  Let $H$ be an algebraic group, $G\leq
  H(\calu)$  a Zariski dense definable subgroup. Then there are an
  algebraic group $H'$, 
    a quantifier-free definable subgroup $R$ of $H'(\calu)$, together with a
  quantifier-free definable homomorphism $f:R\to G$, with $f(R)$ contained and of finite index in $G$, and $\Ker(f)$
    finite central in $R$.
\end{thm}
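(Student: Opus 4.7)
The plan is to adapt the strategy of Hrushovski--Pillay (Proposition~3.3 of \cite{HP2}) from $\acfa$ to $\dcfa$. The main input is Theorem~\ref{LS1}(4): every $\call_{\si,\Delta}$-definable set is a finite-to-one projection of a quantifier-free definable set. Applied to $G$ itself, this will let us cover $G$ by a quantifier-free definable set $W$, and the rest of the argument will use a group-chunk theorem to turn $W$ (or rather a suitable irreducible component of it) into an algebraic group.

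Fix a difference-differential field of definition $K$ for $G$ and $H$. By Theorem~\ref{LS1}(4), there is a positive quantifier-free formula $\psi(x,y)$ over $K$ with $G = \{x : \exists y\, \psi(x,y)\}$ and such that $\psi(x,y)$ forces $y \in \acl(x)$. Let $W = \{(x,y) : \psi(x,y)\}$, so the projection $\pi : W \to G$ is surjective with finite fibres. Replacing $G$ by a definable subgroup of finite index, I would assume $G$ is $\si$-$\Delta$-connected; then pick a generic $a$ of $G$ over $K$, choose $b$ with $(a,b) \in W$, and let $V$ be the $\si$-$\Delta$-locus of $(a,b)$ over $K$, an irreducible quantifier-free definable piece of $W$ whose projection contains the generic of $G$.

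The main step is a Weil--Hrushovski group-chunk argument transferring the group law of $G$ to $V$. Given independent generic realizations $(a_1,b_1)$ and $(a_2,b_2)$ of $V$ over $K$, the product $a_3 := a_1 a_2$ lies in $G$, and there is $b_3$ with $(a_3, b_3) \in V$; since $b_3 \in \acl(Ka_3) \subseteq \acl(Ka_1a_2b_1b_2)$, elimination of imaginaries (Theorem~\ref{LS1}(6)) together with the freedom to enrich $b$ by canonical parameters of the finite sets of compatible completions allows us to arrange $b_3 \in \dcl(Ka_1b_1a_2b_2)$. This produces a quantifier-free definable, generically defined multiplication on $V$ which is generically associative, admits a generic inverse, and lifts the group law of $G$ via $\pi$. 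Applying the group-chunk theorem in the $\call_\Delta$-reduct, combined with Pillay's embedding of differential algebraic groups into algebraic groups (Theorem~4.1 of \cite{P-Fund}), this generic operation integrates to an algebraic group $H'$ containing a quantifier-free definable subgroup $R \leq H'(\calu)$ in which $V$ is Zariski dense, together with a quantifier-free definable homomorphism $f : R \to H(\calu)$ extending $\pi$.

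Since the generic of $V$ is mapped by $f$ to a generic of $G$, the image $f(R)$ is contained in $G$ and has finite index there; and $\Ker(f)$ is finite because $\pi$ has finite fibres on $V$. To see that $\Ker(f)$ is central, I would replace $R$ by its $\si$-$\Delta$-connected component (still quantifier-free definable and of finite index in $R$): a finite normal subgroup of a connected group admits only the trivial continuous action by conjugation, so $\Ker(f)$ is central. The main technical obstacle is the middle step --- canonically choosing $b_3$ in $\dcl(Ka_1b_1a_2b_2)$ rather than only in $\acl$, and simultaneously verifying generic associativity of the resulting partial operation. This is where elimination of imaginaries (Theorem~\ref{LS1}(6)), the control of independence by algebraic independence of algebraic closures (Theorem~\ref{LS1}(5)), and the ability to enlarge the auxiliary tuple $b$ all play decisive roles, exactly as in the $\acfa$ setting of \cite{HP2}.
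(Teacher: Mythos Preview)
Your overall strategy matches the paper's: both follow the Hrushovski--Pillay template from \cite{HP2}, covering $G$ by a quantifier-free $W$ via Theorem~\ref{LS1}(4) and then running a group-chunk argument. The difference lies precisely in the step you correctly flag as the main obstacle --- getting a \emph{definable} partial group law on the cover.

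You propose to arrange $b_3 \in \dcl(Ka_1b_1a_2b_2)$ by enriching the auxiliary tuple $b$ with canonical parameters for the finite fibre. This is under-justified: elimination of imaginaries gives you a code for the finite set of compatible $b_3$'s, not a canonical element of it, and it is not clear the enrichment process terminates or yields a single stationary locus. The paper (and in fact \cite{HP2} itself) sidesteps this entirely by working with \emph{germs} rather than with the cover. Concretely: take independent generics $b,c$ of $G$ with $a=cb^{-1}$ and lifts $\hat b,\hat c$ to $W$, and define $a_1$ by
\[
F_0(a,a_1)_{\si,\Delta}=\acl(F_0a)\cap F_0(b,\hat b,c,\hat c)_{\si,\Delta},
\]
using Lemma~\ref{cb1} on quantifier-free canonical bases to see $a_1$ is a finite tuple. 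Then $(a,a_1)$ is the germ of an invertible quantifier-free definable map between stationary quantifier-free types. After a base-change trick (choose an independent copy $(\tilde a,\tilde a_1)$ and enlarge to $F'_0\ni\tilde a$, so the germ now goes from $q'_1$ to itself), one shows the type $r=qftp(a,a_1/F'_0)$ is closed under generic composition \emph{automatically}: the composite germ lies in $F'_0(a,a_1,a',a'_1)_{\si,\Delta}$ because it is simultaneously algebraic over $F'_0(a,a')$ and rational in $(b,b_1,a,a_1,a',a'_1)$, and these tuples are independent. No canonical section of the fibre is ever needed. The passage from this normal group law on infinite tuples to a quantifier-free group $R$ inside an algebraic $H'$ then goes through Pillay's machinery in \cite{P-Fund} (Lemma~2.3, Propositions~3.1 and~4.1), using Noetherianity of the $\si$-$\Delta$-topology to cut down to a finite stage; the map $f$ comes from the graph of $qftp(a,a_1,d/F'_0)$ with $d=\tilde a^{-1}a$.

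In short: your outline is correct in shape, but the specific mechanism you propose for definability of the product is not the one that works cleanly; replace it with the germ/canonical-base construction and the argument goes through.
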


\prf We follow closely the proof of Hrushovski-Pillay given in
\cite[Prop.~3.3]{HP2}. Passing to a subgroup of $G$ of finite index, we may
assume that its $\sigma$-$\Delta$-closure $\tilde G$ is connected for the $\si$-$\Delta$-topology.  We
work over some small $F_0\prec \calu$ over which $G$ is defined.
By Theorem~\ref{LS1}(4), using compactness,  we know that there is some
quantifier-free 
 definable set $W$, and projection $\pi:W\to\tilde G$ with
finite fibers and such that $G=\pi(W)$; replacing if necessary $\pi$ by the transpose
of its graph, we
may assume that $\pi$ is the projection on
the first $t$ coordinates (where $t$ is such that $H\leq \aff^t$ or
${\mathbb P}^{t-1}$). \\[0.05in] 
Let $b,c$ be independent generics of $G$, let $a\in G$ be such that
$ab=c$, and let $\hat b,\hat c$   be tuples in $\calu$ such that
$(b,\hat b)$, $(c,\hat c)\in W$. So $\hat b \in \acl(F_0b)$, and $\hat c \in \acl(F_0c)$.
We let $a_1\in \calu$ be such that $\acl(F_0a)\cap F_0(b,\hat b,c,\hat
c)_{\si,\Delta}=F_0(a,a_1)_{\si,\Delta,}$. Note  that because $a=cb\inv$ and
$\acl(F_0a)$ is Galois over $F_0(a)_{\si,\Delta}$, 
the field $F_0(b,\hat b,c,\hat c)_{\si,\Delta}$ is a regular extension of
$\acl(F_0a)\cap F_0(b,\hat b,c,\hat c)_{\si,\Delta}$, and is finitely
generated algebraic over $F_0(a)_{\si,\Delta}$. Hence $a_1$ can
be chosen finite by Lemma \ref{cb1}.  Moreover, $qftp(b,\hat b,c,\hat
c/F_0(a,a_1)_{\si,\Delta})$ is stationary (see Remark  \ref{rem-LS1}(e)), and $F_0(a,a_1)_{\si,\Delta}$
contains $\qfcb(b,\hat b,c,\hat c/\acl(F_0a))$ (the quantifier-free
canonical basis, see subsection \ref{qfcb}).\\[0.05in]
Observe that $qftp(c,\hat c,a,a_1/F_0(b,\hat b)_{\si,\Delta})$ is stationary: this is
because \\$qftp(c,\hat c/F_0(b,\hat b)_{\si,\Delta})$ is stationary, and $(a,a_1)\in
F_0(b,\hat b,c,\hat c)_{\si,\Delta}$. We now define $b_1$ by  
$$\acl(F_0b)\cap
F_0(a,a_1,c,\hat c)_{\si,\Delta}=F_0(b,b_1)_{\si,\Delta},$$ and $c_1$ by 
 $$\acl(F_0c)\cap
F_0(a,a_1,b,b_1)_{\si,\Delta}=F_0(c,c_1)_{\si,\Delta}.$$

\noindent
By Remark \ref{canonicalbase}  we obtain :
\begin{itemize}
    \item $\qfcb(a,a_1,c,\hat c/\acl(F_0b))\subseteq F_0(b,b_1)_{\si,\Delta}$
\item $\qfcb(a,a_1,b,b_1/\acl(F_0c))\subseteq F_0(c,c_1)_{\si,\Delta}$.

\end{itemize}

\noindent
Since $b_1 \in \acl(F_0c)(a, a_1)_{\si, \Delta}$ and $c_1 \in \acl(F_0b)(a, a_1)_{\si, \Delta}$, we obtain that
$b_1\in F_0(a,a_1,c,c_1)_{\si,\Delta}$ and $a_1\in
F_0(b,b_1,c,c_1)_{\si,\Delta}$. I.e., we have
$$F_0(a,a_1,c,c_1)_{\si,\Delta}=F_0(a,a_1,b,b_1)_{\si,\Delta}=F_0(b,b_1,c,c_1)_{\si,\Delta}.\eqno{(1)}$$

\noindent
As in \cite{HP2}, $(a,a_1)$ defines the germ of a generically
defined and invertible $\si$-$\Delta$-rational map $g_{a,a_1}$ from (the
set of realisations of) $q_1=qftp(b,b_1/F_0)$ to
$q_2=qftp(c,c_1/F_0)$. (In our setting, this means:   there are
definable sets $U_1$ and $U_2$, such that the $\si$-$\Delta$-closure of
$U_i$ equals the $\si$-$\Delta$-closure of
the set of realisations of $q_i$, and such
that  $g_{a,a_1}$ defines a $\sigma$-$\Delta$-rational
invertible map $U_1\to U_2$. We may if necessary 
{replace} $U_i$ by a smaller set.) \\[0.05in]
Choose $(\tilde a,\tilde a_1)\in \calu$ realising $qftp(a,a_1/F_0)$ and
independent from $(a, b,c)$ over $F_0$. Let $F_0'\prec \calu$ contain
$F_0(\tilde a)$ and such that $(a,b,c)$ is independent from $F'_0$ over
$F_0$. \\[0.05in]
Let $(b',b'_1)$  be such that
$$qftp(a,a_1,b,b_1,c,c_1/F_0)=qftp(\tilde a,\tilde
a_1,b',b'_1,c,c_1/F_0).$$  Then we have
$$F_0(\tilde a,\tilde a_1,c,c_1)_{\si,\Delta}=F_0(\tilde a,\tilde
a_1,b',b'_1)_{\si,\Delta}=F_0(b',b'_1,c,c_1)_{\si,\Delta},\eqno{(1')}$$
so that $$F'_0(b',b'_1)_{\si,\Delta}=F'_0(c,c_1)_{\si,\Delta}\eqno{(2)}$$ because
$(b',b'_1)\in F_0(\tilde a, \tilde a_1, c,c_1)_{\si,\Delta}\subset 
F'_0(c,c_1)_{\si,\Delta}$. In particular, $$(b,b_1) \hbox{ and }
(b',b'_1) \hbox{  are
independent over }F'_0. \eqno{(3)}$$ (because $b$ and $c$ are).
We now let
$d=(\tilde a)\inv a$, and  
$r=qftp(a,a_1/F'_0)$ (the unique non-forking extension of
$qftp(a,a_1/F_0)$ to $F'_0$).   
	
	\medskip\noindent
{\bf Claim 1}.
	 \begin{enumerate}
    \item[(i)] $F'_0(b,\hat b,c,\hat c)_{\si,\Delta}\cap
      \acl(F'_0d)=F'_0(a,a_1)_{\si,\Delta}$.
      \item[(ii)] $qftp(b,b_1/F'_0)=qftp(b',b'_1/F'_0)=:q'_1$ is the unique
        non-forking extension of $q_1$ to $F'_0$.
        \item [(iii)] $(a,a_1)$ defines over $F'_0$ the germ of an
          invertible generically defined function from $q'_1$ to
          $q'_1$, sending $(b, b_1)$ to $(b', b_1')$.

        \item[(iv)]  $F_0'(a)_{\si, \Delta}= F_0'(d)_{\si, \Delta}.$
        \item[(v)] $db=b'$.
          \item[(vi)] $(a,a_1)\in F'_0(b,b_1,b',b'_1)_{\si,\Delta}$.
          \end{enumerate}

         \prf
   (iv) and (v) are clear from $d=(\tilde a)\inv a$ and $\tilde a\in F'_0$. Then
         (iv) and the fact that 
         $F'_0$ is independent from $(a,b,c)$ over $F_0$ give (i). We
         get (ii) from the independence of $(b,b_1)$ and $(b',b'_1)$
         over $F'_0$, that $(b,b_1)$ realises $q'_1$, and
         $qftp(b',b'_1/F_0)=qftp(b,b_1/F_0)$. Then (iii) follows from
         the fact that $g_{\tilde a,\tilde a_1}\inv$ is the germ of a
         function from $q_2$ to $q_1$, $(c,c_1)\mapsto (b',b'_1)$, and
         is defined over $F'_0$. Hence $h_{a,a_1}:=g_{\tilde a,\tilde a_1}\inv
         g_{(a,a_1)}$ is defined over $F'_0(a,a_1)_{\si,\Delta}$ and
         sends $(b,b_1)$ to $(b',b'_1)$.  Finally (vi) follows from
         equations (1) and ($1'$).    \qed

	\medskip\noindent
{\bf Claim 2}. The set of realisations of $r$ is closed under generic
composition, and if $(\alpha,\alpha _1)$ and $(a',a'_1)$ are $F'_0$-independent
realisations of $r$, then $F'_0(\alpha,\alpha_1,a',a'_1)_{\si,\Delta}$ contains a
realisation $(a'',a''_1)$ of $r$, which is independent from $(\alpha,\alpha_1)$
and from $(a',a'_1)$ over $F'_0$.  

\begin{proof}
 Note that if $A_1$, $A_2$, $A_3$, $A_4$ realise $q'_1$, and are such
that $A_1$ and $A_2$ are independent over $F'_0$ and $A_3$ and $A_4$ are
independent over $F'_0$, then
$qftp(A_1,A_2/F'_0)=qftp(A_3,A_4/F'_0)$. Since the assertion of the
claim only depends on
$qftp(\alpha,\alpha_1,a',a'_1/F'_0)$, we may assume  that
$(\alpha,\alpha_1)=(a,a_1)$, and assume that $(a',a'_1)$ is independent
from $(a,b,b')$ over $F'_0$, and this is what we will do. \\
If $(b'',b''_1)\in \calu$ is such that: $$qftp(
a',a'_1,b',b'_1,b'',b''_1/F'_0)=qftp(a,a_1,b,b_1,b',b'_1/F'_0),$$ then
from the fact that
$$F'_0(a,a_1,b,b_1)_{\si,\Delta}=F'_0(a,a_1,b',b'_1)_{\si,\Delta}=F'_0(b,b_1,b',b'_1)_{\si,\Delta}$$
(by equations (1) and ($1'$)), applying them to the tuple
$(a,a_1,b',b'_1,b'',b''_1)$ and using (3) we
obtain that $(b,b_1)$ and $(b'',b''_1)$ are independent over $F'_0$, and
realise $q'_1$. 

\noindent
So, if $(a'',a''_1)\in F'_0(b,b_1,b'',b''_1)_{\si,\Delta}$ is such that
$qftp(a'',a''_1,b,b_1,b'',b''_1/F'_0)=qftp(a,a_1,b,b_1,b',b'_1/F'_0)$,
then $qftp(a'',a''_1/F'_0)=r$. Because $b,b',b''$ are independent over
$F'_0$, we get that $(a'',a''_1)$ is independent from $(a,a_1)$ and from
$(a',a'_1)$ over $F'_0$. It remains to show that $(a'',a''_1)\in
F'_0(a,a_1,a',a'_1)_{\si,\Delta}$. From the definition of $h_{a,a_1}$ and
$h_{a',a'_1}$, we get $(\tilde a)\inv ab=b'$, $(\tilde a)\inv a'b'=b''$, and
therefore $(\tilde a)\inv a'\tilde a b=b''=(\tilde a)\inv a''b$, from
which we deduce that $a'(\tilde a)\inv a=a''$, so that $a''\in
F'_0(a,a')$. On the other hand, we know that $(b',b'_1)\in
F'_0(a,a_1,b,b_1)_{\si,\Delta}$, and that $(b'',b''_1)\in
F'_0(a',a'_1, b',b'_1)_{\si,\Delta}$, so that $$(a'',a''_1)\in
F'_0(a,a_1,a',a'_1, b,b_1)_{\si,\Delta}.$$  Hence  $$(a',a''_1)\in
\acl(F'_0,a,a')\cap F'_0(a,a_1,a',a'_1,
b,b_1)_{\si,\Delta}=F'_0(a,a_1,a',a'_1)_{\si,\Delta}$$
because $(b,b_1)$ is independent from $(a,a_1,a',a'_1)$ over $F'_0$, and
  the claim is proved. 
\end{proof}

\medskip\noindent
{\bf Claim 3}. $F'_0(a,a_1,a',a'_1)_{\si,\Delta}=F'_0(a,a_1, a'',
a''_1)_{\si,\Delta}= F'_0(a',a'_1, a'',a''_1)_{\si,\Delta}$.

\prf Let us consider the map $((a,a_1),(a',a'_1))\mapsto
(a'',a''_1)$, which is defined at least when $(a,a_1)$ and $(a',a'_1)$
are independent over $F'_0$. Observe now that
  $qftp(b,b_1,b',b'_1/F'_0)=qftp(b',b'_1,b,b_1/F'_0)$, and so we get a
  realisation $(\bar a,\bar a_1)$ of $r$ which represents the germ of
  the inverse of $(a,a_1)$; then $(\tilde a)\inv\bar a=a\inv\tilde a$, and therefore using
  (1),  $(\bar
  a,\bar a_1)\in \acl(F'_0 a)\cap  F'_0(a,a_1, b,
  b_1)_{\si,\Delta}=F'_0(a,a_1)_{\si,\Delta}$ (because $b$ is independent
  from $a$ over $F'_0$.)  
Reasoning as in the proof of Claim 2 and using  $(\tilde a)\inv a'=(\tilde
a)\inv a'' a\inv \tilde a$, and  $\tilde a \inv a=\tilde a (a')\inv
(\tilde a)\inv a''$, proves the claim. \qed

\noindent
Similarly,
using the fact that the first part of the tuple lives in the algebraic
group $H$, one gets that the map that  associates to $((a,a_1), (a',a'_1))$
 the tuple $(a'',a''_1)$ as above, is generically associative. Hence we are in
presence of a normal group law in the sense of \cite{W1} (pages 356-357), involving however
infinite tuples. \\[0.05in]
We now will reason as in \cite{P-Fund} (Lemma 2.3  and
Propositions 3.1 and 4.1 in \cite{P-Fund}):  as the $\si$-$\Delta$-topology is
Noetherian, we can find an algebraic group $H'$ and  a quantifier-free definable and connected subgroup $R$ of $H'(\calu)$ such that
$r$ is the generic type of $R$. 
More precisely: as in Lemma 2.3 of
  \cite{P-Fund}, we work in the pure field $\calu$, and replace
  $(a,a_1)$ by the infinite tuple obtained by closing $(a,a_1)$ under $\si$,
  $\si\inv$ and the $\delta_i$. This allows to represent the normal group law
  as a normal group law on some inverse limit of algebraic sets,
  together with a definable map from the set of realisations of $r$ to this
  inverse limit. (This map is the one which associates to a tuple $(g,g_1)$
  realising $r$ the infinite tuple of coordinates $(g,g_1)_{\si,\Delta}$. We
  will denote it by $\nabla$).  Then
  Proposition~3.1 of \cite{P-Fund} shows how to replace this inverse limit by an inverse
  limit of algebraic groups. And finally, as in Theorem~4.1 of \cite{P-Fund}, the
  Noetherianity of the $\si$-$\Delta$-topology guarantees that the map $\nabla$
  from the set of realizations of $r$ to this inverse limit of
$\calu$-points of algebraic  groups must yield an injection at some finite stage, say to  $H'(\calu)$. The algebraic 
  relations between the coordinates then give rise to the
  $\si$-$\Delta$-equations satisfied by $(g,g_1)$, i.e., they 
  quantifier-freely define the group $R\leq H'(\calu)$. 

 \noindent

\noindent
  Let us now look at $p=qftp(a,a_1,d/F'_0)$,
  and recall (by (iv) of Claim 1) that $F'_0(a)_{\si,\Delta}=F'_0(d)_{\si,\Delta}$, and let $K$ be the subgroup of
  $(H'\times H)(\calu)$ generated by the realisations of $p$. Observe that it is 
  definable by a quantifier-free $\call_{\si,\Delta}$-formula.\\[0.05in] 
  As in \cite{HP2}, it follows that $K$ is the graph of a group
  epimorphism $f:R\to \tilde G$ that is finite-to-one. Indeed, if $a$ is
  a generic of $G\leq H(\calu)$, then $f\inv(a)=(a,a_1)$ is algebraic
  over $F_0(a,a_1)_{\si,\Delta}$, so that $\SU(a/F_0)=\SU(a,a_1/F_0)$, i.e.,
  $\SU(R)=\SU(\tilde G)$;   hence $f(R)$ must have finite index
  in $\tilde G$. As $f\inv(g)$ is finite for every realisation of $p$,    $\Ker(f)$ must be
  finite. Because $R$ is
  connected for the $\si$-$\Delta$-topology, the kernel is central.  
	
	\medskip\noindent
{\bf Claim 4}. $f(R)\leq G$.
	
	\begin{proof}
	Let $(g,g_1)$ be a generic of $R$, i.e., a
  realisation of $r$. Then $g\in\tilde G$. We know that $qftp(b,\hat
  b,c, \hat c/F'_0(a,a_1)_{\si,\Delta})$ is stationary, and therefore so is its image
  under any $F'_0$-automorphism of the difference-differential field $\calu$
  sending $(a,a_1)$ to $(g,g_1)$, so that by Remark \ref{rem-LS1}(e), there are $(h,\hat h, u,\hat
  u)$ in $\calu$ such that
  $$qftp(a,a_1,b,\hat b,c,\hat c/F'_0)=qftp(g,g_1,h,\hat h,u,\hat
  u/F'_0).$$
  Thus $h,u\in G$, and so does $g=uh\inv$. \end{proof}

\noindent        
Observe that $f(R)$ has finite index in $G$, because it has finite index
in $\tilde G\geq G$. This finishes the proof of the theorem. \qed 

\begin{remark}\label{rem-sbgp} In the notation of Theorem \ref{sbgp},
consider $R_{(n)}$ and $G_{(n)}$, as well as the natural
$\call_\Delta$-map $f_{(n)}:R_{(n)}\to G_{(n)}$. While the map $f$ is
 not onto $\tilde G$, the
map $f_{(n)}$ is surjective onto $G_{(n)}$ for all $n\geq 0$ (in the differential field
$\calu$).  This follows from quantifier-elimination in $\dcf$. Moreover,
the image of $R$ in $\tilde G$ is dense for the $\si$-$\Delta$-topology, i.e., this
is the appropriate notion of a {\em dominant map} between
difference-differential varieties. 
\end{remark}

\begin{thm} \label{ACFAp} Let $\calu$ be a model of ACFA (possibly of positive
  characteristic $p$), let $H$ be an algebraic group defined over
  $\calu$, and $G\leq H(\calu)$ a definable subgroup, with $\si$-closure
  $\tilde G$. Then there is an algebraic group $H'$, a quantifier-free definable
  subgroup $R$ of $H'(\calu)$, and a quantifier-free definable
  homomorphism $f:R\to \tilde G$, with finite kernel and such that
  $[\tilde G:f(R)]$ is finite.
\end{thm}

\prf If the characteristic is $0$, this follows from Theorem \ref{sbgp}
with $m=0$. Suppose now that the characteristic is $p>0$, and let
$F_0\prec \calu$ be countable and contain the parameters needed to
define $H$ and $G$. \\[0.1in]
{\bf Claim}. There is a quantifier-free definable set $W$, and a
quantifier-free definable map $f: W\to \tilde G$, such that $f(W)=G$, and
for every 
$a\in G$, $f\inv(a)$ is finite, and separably algebraic over
$F_0(a)_\si$.

\prf We know this holds with $f\inv(a)$ algebraic over $F_0(a)_\si$ (by
Theorem \ref{LS1}(4)), we
need to show that we can find such an $f$ with $f\inv(a)$ separably
algebraic over $F_0(a)_\si$ when $a\in G$. This follows from the following
observation: $tp(a/F_0)$ is uniquely determined by the isomorphism type
over $F_0$ of the difference field $F_0(a)_\si^s$, the separable closure
  of the field $F_0(a)_\si$. Indeed, the automorphism $\si$ of $F_0(a)_\si^s$
  extends uniquely to $F_0(a)_\si^{alg}$. A compactness argument then
  shows the claim. \qed

\noindent
  Thus the proof of Theorem \ref{sbgp} can be reproduced almost verbatim,
using the following observation:\\[0.05in]
Given a difference subfield $A$ of $\calu$,  $\scl(A):=\acl(A)\cap A^s$ is Galois over
$A$, so that if $b\in\calu$, then $\scl(A)\cap A(b)_\si$  contains
  $\qfcb(b/\scl(A))$, because $A^s$ and $A(b)_\si$ are linearly disjoint
over their intersection. \\
Thus, replacing everywhere in the proof of \ref{sbgp} $\acl$ by $\scl$
gives the result. \qed

\bigskip\noindent
Another corollary worth mentioning is the following:
\begin{thm} \label{F-ell-2} Let $\calu\models \dcfa$, $\ell\geq 1$,  let $H$ be an algebraic group defined over
  $F_\ell$, and $G\leq H(F_\ell)$ a Zariski dense definable subgroup. Then there are an algebraic group $H'$, a quantifier-free $\call_{\si,\Delta}$-definable
  subgroup $R$ of $H'(F_\ell)$, and a quantifier-free definable
  homomorphism $f:R\to  G$, with finite kernel and such that
  $[G:f(R)]$ is finite.
\end{thm}

\prf Reason as in the proof of Theorem \ref{sbgp}, using Lemma
\ref{F-ell-1} to get $H'$ and $R$ definable in $F_\ell$. \qed

\begin{defn} Let $H$ be an algebraic group. It is {\em simply connected}
if it is connected and whenever $f:H'\to H$ is an isogeny from a connected algebraic group
$H'$ onto $H$, then $f$ is an isomorphism. \\
The {\em universal  covering of the connected algebraic group $H$} is a
simply connected algebraic group $\hat H$, together with an isogeny
$\pi:\hat H\to H$.  It satisfies the following universal property (see
18.8 in \cite{Mil}): if
$\varphi:H'\to H$ is an isogeny of  connected algebraic groups, 
then there is a
unique algebraic homomorphism $\psi: \hat H\to \hat H'$ such that
$\varphi \circ \psi=\pi$. 
\end{defn}

\begin{remark} \begin{enumerate}
    \item The definition of simply connected in arbitrary
  characteristic is a little 
  more complicated. The algebraic groups we will consider will be
  semi-simple algebraic groups, defined and split over $\rat$,  and we
  will be considering their rational points in some algebraically closed
  field $K$. 
\item  Every simple algebraic group has a universal  covering and it is itself a simple algebraic group, see
  section~5 in \cite{St62} for properties, or Chapter 19 in \cite{Mil}.
\item   Note that if $H$ is a simple algebraic group and $K$ is
algebraically closed, then $H(K)/Z(H(K))$ is simple as
an abstract group.
\item  Moreover, since a semi-simple algebraic group is isogenous to the
 product of its simple factors, it follows that the universal 
 covering of a semi-simple algebraic group is simply the product of the
 universal coverings of its simple factors.
 \end{enumerate}
\end{remark}
\begin{lem} \label{lem5} Let $H$ be a simple algebraic group defined
over the algebraically closed field $L$ of characteristic $0$, and
$\pi:\hat H\to H$ its
universal   covering. Then any algebraic automorphism of $H(L)$ lifts to
one of $\hat H(L)$.
\end{lem}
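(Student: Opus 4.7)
The plan is to apply the universal property of the universal covering directly to the isogeny $\varphi^{-1}\circ\pi$. Given an algebraic automorphism $\varphi$ of $H(L)$, I view $\varphi$ as an algebraic automorphism of the algebraic group $H$ defined over $L$ (since we are in characteristic $0$ and $L$ is algebraically closed). Then $\varphi^{-1}\circ\pi:\hat H\to H$ is a composition of an isogeny with an automorphism, hence an isogeny of connected algebraic groups. Invoking the universal property of $\pi:\hat H\to H$ with respect to this isogeny (and the simply connected target $\hat H$), I obtain a unique algebraic homomorphism $\hat\varphi:\hat H\to\hat H$ such that
\[
(\varphi^{-1}\circ\pi)\circ\hat\varphi=\pi,\qquad\text{equivalently,}\qquad \pi\circ\hat\varphi=\varphi\circ\pi.
\]

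Next I verify that $\hat\varphi$ is an automorphism. Running the same construction with $\varphi^{-1}$ in place of $\varphi$ produces an algebraic homomorphism $\widehat{\varphi^{-1}}:\hat H\to\hat H$ with $\pi\circ\widehat{\varphi^{-1}}=\varphi^{-1}\circ\pi$. Composing gives
\[
\pi\circ(\hat\varphi\circ\widehat{\varphi^{-1}})=\varphi\circ\pi\circ\widehat{\varphi^{-1}}=\varphi\circ\varphi^{-1}\circ\pi=\pi,
\]
and of course $\pi\circ\mathrm{id}_{\hat H}=\pi$. The uniqueness clause in the universal property (applied to the isogeny $\pi$ itself) then forces $\hat\varphi\circ\widehat{\varphi^{-1}}=\mathrm{id}_{\hat H}$, and symmetrically $\widehat{\varphi^{-1}}\circ\hat\varphi=\mathrm{id}_{\hat H}$. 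Thus $\hat\varphi$ is an algebraic automorphism of $\hat H$.

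Finally, the field of definition is automatic: since $\varphi$, $\varphi^{-1}$ and $\pi$ are defined over $L$, and $\hat\varphi$ is characterised uniquely by an equation over $L$, Galois descent (or simply the uniqueness statement applied over $L$ itself) shows that $\hat\varphi$ is defined over $L$ and therefore restricts to an automorphism of $\hat H(L)$ lifting $\varphi$. There is no real obstacle in this argument; it is a direct formal consequence of the universal property recalled just before the lemma, and the only point requiring a word is the verification that the constructed lift is invertible, which the symmetric construction with $\varphi^{-1}$ handles.
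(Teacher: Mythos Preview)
Your proof is correct and follows essentially the same approach as the paper: both apply the universal property of $\pi:\hat H\to H$ to the isogeny obtained by composing $\pi$ with the given automorphism (or its inverse) to produce the lift. The only cosmetic difference is in verifying that the lift is an automorphism: you use the symmetric construction for $\varphi^{-1}$ together with the uniqueness clause, whereas the paper argues directly that the lift is surjective (its image projects onto $H$, hence is all of $\hat H$) and has finite kernel, so is an isomorphism since $\hat H$ is simply connected.
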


\begin{proof} Let $\varphi$ be an algebraic automorphism of $H(L)$, and consider the
map $p:\hat H(L) \to H(L)$ defined by $\varphi\circ \pi$. Then there is a map
$\psi:\hat H(L)\to \hat H(L)$ such that $\pi=\varphi\circ\pi\circ\psi$. It
then follows easily that $\psi$ is an isomorphism: $\psi(\hat H(L))$ is
a subgroup of $\hat H(L)$ which projects onto $H(L)$ via $\pi$, hence must
equal $\hat H(L)$. So $\psi$ is onto, and because $\Ker(\pi)$ is finite, it 
must be injective. 

\end{proof}
\begin{thm} \label{connected1} Let $H$ be a simply connected simple algebraic group defined
and split over $\rat$. Then every Zariski dense definable subgroup of
$H(\calu)$ is quantifier-free definable. 
Equivalently, every Zariski dense definable subgroup $G$ of 
$H(\calu)$ has a smallest definable subgroup $G^0$ of finite index, and $G^0$ is
quantifier-free definable. \\
Furthermore, if  $G\leq H(\calu)$ is definable, Zariski dense and
connected for the $\si$-$\Delta$-topology, then 
there is an $\call_\Delta$-definable subfield $L$ of
$\calu$, such that $h\inv G^0h\leq H(L)$ for some $h\in H(\calu)$, and
either $h\inv G^0h=H(L)$, or $$h\inv G^0h=\{g\in H(L)\mid \si^n(g)=\theta(g)\}$$ for
some integer $n$ and algebraic automorphism $\theta$ of $H(L)$. \end{thm}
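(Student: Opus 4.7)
The strategy is to combine Theorem~\ref{sbgp} with the simply-connectedness of $H$ in order to replace $G$, up to finite index, by a quantifier-free definable subgroup of $H(\calu)$, and then to invoke Theorem~\ref{prop1} to read off its explicit structure and establish minimality.

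First I would apply Theorem~\ref{sbgp} to $G$, obtaining an algebraic group $H'$, a quantifier-free definable subgroup $R \leq H'(\calu)$, and a quantifier-free definable group homomorphism $f: R \to G$ with $f(R)$ of finite index in $G$ and $\Ker(f)$ finite central in $R$. After replacing $R$ by a suitable finite-index subgroup, I may assume $R$ is connected for the $\si$-$\Delta$-topology and that its Zariski closure $\bar R$ in $H'$ is Zariski irreducible. Since $f(R)$ is Zariski dense in $H$, the homomorphism $f$ extends to a surjective algebraic group homomorphism $\tilde f : \bar R \to H$ (the Zariski closure of its graph being an algebraic subgroup of $\bar R \times H$ with generically single-point fibres over $\bar R$). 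Let $N = \Ker(\tilde f)$. The induced map $\bar R / N^0 \to H$ is an isogeny of connected algebraic groups onto the simply connected $H$, hence an isomorphism, so $N = N^0$ is connected.

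The main step, and the main obstacle, is to promote this to the case $N = 1$, equivalently to show that the image $f(R)$ is quantifier-free definable inside $H(\calu)$. I would attempt this by pushing $R$ forward under the algebraic quotient $\bar R \to \bar R / N \cong H$: the fibres of the restricted map $R \to f(R)$ are cosets of the finite group $\Ker(f) = R \cap N$, and using the Noetherianity of the $\si$-$\Delta$-topology together with a canonical-base argument in the spirit of the proof of Theorem~\ref{sbgp}, I would verify that the pushforward stays quantifier-free definable. This would produce a qf-definable subgroup $G^0 := f(R) \leq H(\calu)$ of finite index in $G$. The delicate point is that the pushforward of a qf-definable set under an algebraic quotient is not in general qf-definable in $\dcfa$, and it is precisely the simply-connectedness of $H$ that should rule out, or allow one to work around, the positive-dimensional $N$ case.

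Having produced $G^0$, I apply Theorem~\ref{prop1} to it: there exist $h \in H(\calu)$ and an $\call_\Delta$-definable subfield $L \leq \calu$ with $h\inv G^0 h$ having Kolchin closure $H(L)$, and either $h\inv G^0 h = H(L)$, or $h\inv G^0 h \subseteq H(\fix(\si^\ell)(L))$ for some $\ell$. If $H$ is centerless, Theorem~\ref{prop1} directly yields the explicit form $\{g \in H(L) \mid \si^n(g) = \theta(g)\}$; for simply connected $H$ with nontrivial finite centre $Z$, I would pass to the centerless adjoint $H/Z$, invoke the centerless case for $\pi(G^0)$, lift the resulting algebraic automorphism of $H/Z$ to one of $H$ via Lemma~\ref{lem5}, and use finiteness of $Z$ to check that the preimage recovers $G^0$ exactly. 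For minimality: in the first case $H(L)$ has no proper definable subgroup of finite index by Fact~\ref{simple}(a) applied to normal cores (since $L$ is algebraically closed); in the second case the twisted form $\{g \in H(L) \mid \si^n(g) = \theta(g)\}$ is definably quasi-simple by Lemma~\ref{lem2}, and its coincidence with the $\fix(\si^n)$-rational points of a simply connected simple group over a pseudo-finite field lets one rule out proper finite-index subgroups by Kneser--Tits-type results.
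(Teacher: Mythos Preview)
Your proposal has a genuine gap at exactly the place you flag as ``the main obstacle'': you never actually establish that $f(R)$ is quantifier-free definable. The vague appeal to ``Noetherianity together with a canonical-base argument'' does not work, because the image of a qf-definable set under an algebraic map with finite fibres is in general \emph{not} qf-definable in $\dcfa$ --- this is precisely the failure of quantifier elimination. Your observation that $N=\Ker(\tilde f)$ is connected (using simple-connectedness of $H$ at the Zariski level) is correct but does not by itself force $f(R)$ to be qf-definable; the positive-dimensional connected $N$ is exactly the situation where the pushforward can fail to be qf.

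The paper uses simple-connectedness in a different and sharper way, and in a different order. It first applies Theorem~\ref{prop1} (and Lemma~\ref{lem5} to lift the automorphism from $H/Z$ to $H$) to reduce to the case where $G$ is already qf-definable, $\si$-$\Delta$-connected, and of the explicit form $\{g\in H(L)\mid \si^n(g)=\theta(g)\}$. Only \emph{then} does it invoke Theorem~\ref{sbgp}, by contradiction: assuming a proper finite-index definable subgroup exists, one gets $R$ and $f:R\to G$ with finite nontrivial kernel, and then passes to the prolongations $f_{(r)}:R_{(r)}\to G_{(r)}$ in the differential theory $\dcf$ (where there \emph{is} quantifier elimination, so $f_{(r)}$ is surjective). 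For $r\geq n-1$ one has $G_{(r)}\simeq H(L)^n$, which is simply connected, so the finite-kernel surjection $f_{(r)}$ forces $R_{(r)}\simeq H(L)^n\times \Ker(f_{(r)})$; perfectness of $H(L)$ and $\Delta$-connectedness of $R_{(r)}$ then force $\Ker(f_{(r)})=1$, contradicting $\Ker(f)\neq 1$. This prolongation argument is the missing idea in your proposal: simple-connectedness is exploited not on the Zariski closure of $R$, but on the Kolchin closure $G_{(r)}$ of the prolonged group.

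Your minimality argument in the twisted case via ``Kneser--Tits-type results'' is also only a sketch: the group in question is a twisted form rather than literally $H(F)$ for $F$ pseudo-finite when $\theta$ is outer, and in any case this imports a substantial external theorem where the paper's internal prolongation argument suffices.
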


\prf Let us first discuss the equivalence of the first two assertions. If any
Zariski dense definable subgroup of $H(\calu)$ is quantifier-free
definable, then 
the Noetherianity of the $\si$-$\Delta$ topology implies
that there is a smallest one, $G^0$. Conversely,  assume that any
Zariski dense definable subgroup $G$ of $H(\calu)$ has a smallest
definable subgroup of finite index, $G^0$, and that $G^0$ is
quantifier-free definable. Then so $G$, since it  is a finite union of
cosets of $G^0$. \\[0.05in]
So, we let $G\leq H(\calu)$ be Zariski dense in $H$, quantifier-free
definable, and connected for the
$\si$-$\Delta$-topology. Then the Kolchin closure $\bar G$ of $G$ is connected
for the Kolchin topology, and by Theorem \ref{Cassidy}, $\bar G$ is
conjugate to $H(L)$, for some field $L$ of constants in $\calu$. Hence
we may assume that $G\leq H(L)$. We want to show that $G$ has no
definable subgroup of finite index.\\
First note that if  $G=H(L)$, then
$G$ has no definable subgroups of finite index (by Fact \ref{simple}(a)), and the result is
proved. Assume therefore that $G$ is a proper subgroup of $H(L)$. 
Let $H'=H/Z(H)$. 
By Theorem \ref{prop1}, there are an integer $n\geq 1$ and an
algebraic automorphism $\theta'$ of $H'(L)$ such that the group 
$G' = GZ(H)/Z(H)$ is defined by 
$$G'=\{g\in H'(L)\mid \si^n(g)=\theta'(g)\}.$$
As $H$ is simply connected, $H\to H/Z(H)$ is the universal  covering of
$H/Z(H)$. By Lemma~\ref{lem5}, there is an algebraic automorphism $\theta$ of $H(L)$
which lifts $\theta'$.\\
{\bf Claim}. $G=\{g\in H(L)\mid \si^n(g)=\theta(g)\}$.\\
Indeed, the group on the right-hand side is $\si$-$\Delta$-closed and
connected (for the $\si$-$\Delta$-topology), and it contains $G$;
 as $G$ is a subgroup of finite index of $GZ(H)$, and is
$\si$-$\Delta$-closed, the equality follows. \qed

\noindent
Assume by way of contradiction that $G$ has a definable subgroup of
finite index $>1$. By Proposition \ref{sbgp}, there are a quantifier-free
definable group $R$ (living in some algebraic group $S$) and a
(quantifier-free) definable map $f:R\to G$ with finite non-trivial central
kernel, and image 
of finite index $>1$ in $G$. \\[0.05in]
For every $r\geq 1$, the map $f$ induces a  dominant $\Delta$-map
$f_{(r)}:R_{(r)}\to G_{(r)}$, and for $r\geq n-1$, this map has finite
central kernel, since for $r\geq n-1$, the natural map $G_{(r)}\to
G_{(n-1)}$ has trivial kernel. Observe that $f_{(r)}$ is surjective by  Remark \ref{rem-sbgp}.\\

\noindent
Fix $r\geq n-1$,
 and consider the (epimorphism) 
 $f_{(r)}:R_{(r)}\to
G_{(r)}\simeq H(L)^n$. Because $H$ is simply connected, so is $H^n$, and
therefore the map $f_{(r)}$ has an algebraic homomorphic section $g: G_{(r)}\to
R_{(r)}$ (i.e., $f_{(r)} g=id_{G_{(r)}}$); 
  since $G_{(r)}$ is connected, we obtain that $R_{(r)}\simeq H(L)^n\times
\Ker(f_{(r)})$. Since $H(L)$ equals its commutator subgroup, it follows that
$[R_{(r)},R_{(r)}]$ ($\simeq H(L)^n$) is 
a $\Delta$-definable normal subgroup of  $R_{(r)}$ which 
projects via $f_{(r)}$ onto $G_{(r)}\simeq H(L)^n$. As $R$ is
connected for the $\si$-$\Delta$-topology, $R_{(r)}$ is connected for
the $\Delta$-topology, and we must therefore have
$\Ker(f_{(r)})=(1)$. \\[0.05in]
So, we have shown that the  definable group $G$ is quantifier-free
definable and  has no proper definable subgroup of finite index. This
finishes the proof.\qed

\begin{thm} \label{prop2} Let $H$ be a simple algebraic group, $G\leq
H(\calu)$ be a definable subgroup which is Zariski dense in $H$. Then
$G$ has a smallest definable subgroup $G^0$ of finite index.  Let $\pi:\hat H\to H$ be the universal covering 
of $H$, and let $\tilde G$ be the connected component of the
$\si$-$\Delta$-closure of $\pi\inv(G)$. Then  $G^0=\pi(\tilde G)$. 
\end{thm}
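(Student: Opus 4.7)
\medskip\noindent\textbf{Proof plan.}
The strategy is to lift $G$ to the simply connected universal cover $\hat H$, where Theorem~\ref{connected1} applies, and then push the result back down via $\pi$. By Remark~\ref{rem-Cass}, I may assume $H$ is defined and split over $\rat$, so that $\hat H$ is a simply connected simple algebraic group defined and split over $\rat$ as required. Since $\pi$ is a central isogeny in characteristic $0$, it is a finite (hence closed) surjection on $\calu$-points with finite central kernel, so $\pi\inv(G)$ is a definable subgroup of $\hat H(\calu)$ and $\pi$ restricts to a surjection $\pi\inv(G)\to G$. Moreover $\pi\inv(G)$ is Zariski dense in $\hat H$: its Zariski closure is sent by the closed map $\pi$ onto a closed subset of $H$ containing the dense set $G$, which forces this image to equal $H$ and hence, by equality of dimensions and connectedness of $\hat H$, forces the closure to be all of $\hat H$.

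I would then split into two cases. If $\pi\inv(G)=\hat H(\calu)$, then $G=H(\calu)$. Fact~\ref{simple}(a), combined with the observation that a definable subgroup of finite index always contains its (still definable) normal core of finite index, implies that neither $G$ nor $\pi\inv(G)$ admits a proper definable subgroup of finite index; thus $G^0=G$, $\tilde G=\hat H(\calu)$, and $\pi(\tilde G)=G$. In the remaining case, $\pi\inv(G)$ is a proper Zariski dense definable subgroup of $\hat H(\calu)$, so Theorem~\ref{connected1} applies: it asserts that $\pi\inv(G)$ is quantifier-free definable, hence equal to its own $\si$-$\Delta$-closure, and that its smallest definable subgroup of finite index is its $\si$-$\Delta$-connected component. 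This is precisely the group $\tilde G$ named in the statement.

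Finally, I would check that $\pi(\tilde G)$ plays the role of $G^0$. The image $\pi(\tilde G)$ is definable, and has finite index in $G$ because $[\pi\inv(G):\tilde G]<\infty$ and $\pi(\pi\inv(G))=G$. For minimality, given any definable subgroup $G_0\leq G$ of finite index, its preimage $\pi\inv(G_0)$ is a definable subgroup of $\pi\inv(G)$ of the same finite index, and hence contains $\tilde G$ by the minimality obtained upstairs; applying $\pi$ and using surjectivity gives $\pi(\tilde G)\subseteq\pi(\pi\inv(G_0))=G_0$. The step that requires the most care is matching the abstract ``smallest definable subgroup of finite index'' provided by Theorem~\ref{connected1} with the explicit description of $\tilde G$ in the statement as the $\si$-$\Delta$-connected component of the $\si$-$\Delta$-closure of $\pi\inv(G)$; this identification, however, becomes immediate once one invokes the quantifier-free definability of $\pi\inv(G)$ asserted by Theorem~\ref{connected1}, which ensures that $\pi\inv(G)$ coincides with its own $\si$-$\Delta$-closure.
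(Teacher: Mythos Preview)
Your proposal is correct and follows essentially the same approach as the paper: lift to the simply connected cover $\hat H$, invoke Theorem~\ref{connected1} there, and push back down via $\pi$. The paper's own proof is extremely terse (two sentences: $\tilde G$ has no definable subgroup of finite index by Theorem~\ref{connected1}, hence neither does $\pi(\tilde G)$), whereas you spell out the Zariski density of $\pi^{-1}(G)$, handle the degenerate case $\pi^{-1}(G)=\hat H(\calu)$ explicitly (which is technically needed since Theorem~\ref{connected1} is stated only for proper subgroups), and verify minimality via preimages rather than by transferring the ``no finite-index subgroup'' property; but the underlying idea is identical.
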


\prf By Lemma \ref{connected1}, $\tilde
G$ has no definable subgroup of finite index. Hence, neither does
$\pi(\tilde G)$. As $\tilde G\cap G$ has finite index in $\pi(\tilde
G)$, we have that $\pi(\tilde G) \subseteq G$. We have shown that
$\pi(\tilde G)$ is contained in any Zariski dense definable subgroup of
$H$, therefore  $\pi(\tilde G)$ is  the smallest definable subgroup of
finite index of $G$. \qed

\begin{cor} \label{connected2} Let $G$ be a definably quasi-semi-simple definable
group. Then $G$ has a definable connected component. 
\end{cor}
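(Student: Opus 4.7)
The plan is to combine Proposition \ref{thm1} (reducing to a Zariski-dense definable subgroup of a semi-simple algebraic group) with the simply-connected-covering strategy from Theorem \ref{prop2}, leveraging Theorem \ref{connected1} on each simple factor via a product decomposition in the spirit of Lemma \ref{lem4}.

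By Proposition \ref{thm1}, after passing to a definable subgroup of finite index and applying a definable homomorphism with finite central kernel, together with Cassidy's Theorem \ref{thm15} and Pillay's embedding of differential algebraic groups into algebraic groups, we may assume $G$ is a Zariski-dense definable subgroup of $\bar H(\calu)$ for some semi-simple algebraic group $\bar H$ defined and split over $\rat$ (with the finite central kernels to be handled at the end by a fiber-product variant of the simply-connected-covering trick from Theorem \ref{prop2}). Let $\pi\colon\hat H\to\bar H$ be the simply-connected universal covering, so $\hat H=\hat H_1\times\cdots\times\hat H_t$ with each $\hat H_i$ simply connected simple and $\ker(\pi)\leq Z(\hat H)$ finite central. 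Set $\hat G=\pi\inv(G)\leq\hat H(\calu)$, and let $\tilde G$ be the $\si$-$\Delta$-closed connected component of $\hat G$. The central claim is that $\tilde G$ has no proper definable subgroup of finite index; granting this, for any definable $U\leq G$ of finite index one has $\pi\inv(U)\cap\tilde G=\tilde G$, so $\pi(\tilde G)\subseteq U$, hence $\pi(\tilde G)$ is the smallest definable subgroup of $G$ of finite index.

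To prove the central claim, project $\tilde G$ to each simple factor: let $\tilde G_i$ denote the $\si$-$\Delta$-closure of $\pi_i(\tilde G)$ in $\hat H_i(\calu)$. Each $\tilde G_i$ is a Zariski-dense, $\si$-$\Delta$-closed, connected subgroup of the simply connected simple group $\hat H_i$, so by Theorem \ref{connected1} it has no proper definable subgroup of finite index and admits the explicit description $\hat H_i(L_i)$ or $\{g\in\hat H_i(L_i)\mid\si^{n_i}(g)=\theta_i(g)\}$. Running the prolongation argument of Lemmas \ref{lem3} and \ref{lem4}, with Theorem \ref{connected1} in place of Theorem \ref{prop1}, one produces a partition $\{I_1,\ldots,I_s\}$ of $\{1,\ldots,t\}$ and an isomorphism $\tilde G\cong\prod_{k=1}^s\pi_{I_k}(\tilde G)$, where each factor is a graph-diagonal subgroup mapping isomorphically onto $\tilde G_i$ for any $i\in I_k$. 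A standard product argument then transfers the no-proper-finite-index property from each factor to $\tilde G$.

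The principal obstacle is the extension of Lemmas \ref{lem3} and \ref{lem4} from centerless to simply connected simple factors: those proofs used triviality of the center to force certain normal subgroups arising in the prolongation analysis to be trivial, producing a clean graph structure, whereas in the simply connected case such subgroups sit inside the finite centers $Z(\hat H_i)$, and the central contributions must be absorbed into the graph isomorphisms. A secondary obstacle is lifting the resulting connected component from the reduced algebraic-semi-simple setting back to the original definably quasi-semi-simple $G$ through the finite central kernel arising in Proposition \ref{thm1}, which we propose to handle by the fiber-product construction already mentioned.
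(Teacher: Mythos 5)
Your overall architecture is genuinely different from the paper's: you pass to the universal covering of the whole semi-simple group at once and try to run the product/graph decomposition (Lemmas \ref{lem3}, \ref{lem4}) among the simply connected factors, whereas the paper first passes to the \emph{centerless} quotient, applies Theorem \ref{prop2semi} there to get an honest direct product of Zariski dense subgroups of the simple factors, handles each factor by Theorem \ref{prop2}, and only then climbs back through the finite kernels. The step you yourself flag as ``the principal obstacle'' is a genuine gap, not a routine verification: Lemmas \ref{lem3} and \ref{lem4} really do use $Z(H_i)=(1)$ to force the Goursat kernels $G\cap H_i$ arising in the prolongation analysis to be trivial, and for simply connected factors these kernels can be nontrivial finite central subgroups. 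Already for $\hat H_1=\hat H_2={\rm SL}_2$ and $G$ the preimage of the diagonal of ${\rm PGL}_2\times{\rm PGL}_2$, the group is not the graph of an isomorphism but a union of finitely many translates of one; one can hope that passing to $\si$-$\Delta$-connected components and lifting the quotient isomorphisms via Lemma \ref{lem5} recovers a graph, but this must be checked at every level of the prolongation tower, and you have not done it. Since your entire argument that $\tilde G$ has no proper definable subgroup of finite index rests on the decomposition $\tilde G\cong\prod_k\pi_{I_k}(\tilde G)$ into graph-diagonal factors, the proof is incomplete at its central step.

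A secondary remark: the finite central kernel coming from Proposition \ref{thm1} does not require a ``fiber-product variant'' of anything. The paper disposes of it with an elementary observation: if $f:G\to G_1$ is definable onto with finite central kernel and $G_1$ has no proper definable subgroup of finite index, then every definable finite-index subgroup of $G$ surjects onto $G_1$ and hence has index at most $|\Ker(f)|$, so a definable finite-index subgroup $G_0$ chosen with $|G_0\cap\Ker(f)|$ minimal has no proper definable subgroup of finite index. I would encourage you to adopt that argument, and more importantly to restructure the reduction so that the product decomposition is carried out in the centerless quotient, where Theorem \ref{prop2semi} already supplies it, reserving the universal-covering trick for one simple factor at a time as in Theorem \ref{prop2}.
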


\begin{proof} Let P be the property ``having a
definable connected component''. The result follows easily from Proposition \ref{thm1}, 
Proposition \ref{prop2semi}, Theorem \ref{prop2}, 
and the following remarks:
\begin{itemize}
\item[(a)] If $G_0$ is a definable subgroup of finite index of $G$, then $G_0$ has
P if and only if $G$ has $P$;
\item[(b)] If the group $G$ is the direct product of its definable subgroups $G_1$, $G_2$,
and $G_1$, $G_2$ have $P$, then so does $G$;
\item[(c)] Let $f:G\to G_1$ be a definable onto map, with $\Ker(f)$ finite. Then  $G_1$ has
P if and only if $G$ has $P$. One direction is clear: if $G$ has P, then
so does $G_1$, because an infinite strictly decreasing of definable
subgroups of $G_1$ of finite index would yield such a sequence for $G$;
for the other, assume that $G_1$ has P, and let $G_1^0$ be
its connected component; then $f\inv(G_1^0)$ is definable and has finite
index in $G$.  So, we may assume that $G_1$ is connected. \\
If $G_0$ is a definable subgroup of finite index of $G$,
then $f(G_0)=G_1$, so that $G_0\Ker(f)=G$; hence $[G:G_0]\leq
|\Ker(f)|$. This being true for any definable subgroup $G_0$ of $G$ of
finite index, implies that there is minimal one, and that its index
in $G$ is bounded by  $ |\Ker(f)|$. 
\end{itemize}
\end{proof}

\end{document}